%
%
%
%
\documentclass{amsart}

\usepackage{graphicx}
\usepackage{bbm}
\usepackage{amsmath}
\usepackage{amssymb}
\usepackage{amsthm}
\usepackage{xfrac}
\usepackage{enumerate} 
\usepackage{enumitem}
\usepackage{color}
\usepackage[titletoc]{appendix}
\usepackage{multirow}
\usepackage{fancyhdr}

\newcommand{\Z}{\Bbb Z}

\newcommand{\C}{\Bbb C}

\newtheorem{theorem}{Theorem}[section]
\newtheorem{lemma}[theorem]{Lemma}
\newtheorem{proposition}[theorem]{Proposition}
\newtheorem{definition}[theorem]{Definition}

\newtheorem{remark}[theorem]{Remark}

\newtheorem{conj}[theorem]{Conjecture}

\makeatletter
\newtheorem*{rep@theorem}{\rep@title}
\newcommand{\newreptheorem}[2]{%
	\newenvironment{rep#1}[1]{%
		\def\rep@title{#2 \ref{##1}}%
		\begin{rep@theorem}}%
		{\end{rep@theorem}}}
\makeatother
\newreptheorem{lemma}{Lemma}
\newreptheorem{definition}{Definition}
\newreptheorem{proposition}{Proposition}
\newreptheorem{theorem}{Theorem}

\numberwithin{equation}{section}



\newcommand{\Addresses}{{
  \bigskip
  \footnotesize

  D. Adamovi\' c, \textsc{ Department of Mathematics, University of Zagreb,  Croatia}\par\nopagebreak
  \textit{E-mail address:} \texttt{adamovic@math.hr}

  \medskip

  A. Kontrec, \textsc{Department of Mathematics, University of Zagreb,  Croatia}\par\nopagebreak
  \textit{E-mail address:} \texttt{ana.kontrec@math.hr}

}}

\begin{document}

\title[Irreducible modules for Bershadsky-Polyakov algebra]{Classification of irreducible modules for Bershadsky-Polyakov algebra at certain levels}

\author{Dra\v zen Adamovi\' c}


\author{Ana Kontrec}

\begin{abstract}
%
We study the representation theory of the Bershadsky-Polyakov algebra $\mathcal W_k = \mathcal{W}_k(sl_3,f_{\theta})$.  In particular, Zhu algebra of $\mathcal W_k$ is isomorphic to a certain quotient of the Smith algebra, after changing the Virasoro vector. 
We classify all modules in the category $\mathcal{O}$ for the Bershadsky-Polyakov algebra $\mathcal W_k$ for $k=-5/3, -9/4, -1,0$.  In the case $k=0$ we show that the Zhu algebra $A(\mathcal W_k)$ has $2$--dimensional indecomposable modules.
\end{abstract}
\keywords{vertex algebra,  W-algebras,  Bershadsky-Polyakov algebra, Zhu's algebra}
\subjclass[2010]{Primary    17B69; Secondary 17B68}
\date{\today}
\maketitle

\section{Introduction}

  Bershadsky-Polyakov  vertex algebra $\mathcal W_k = \mathcal{W}_k(sl_3,f_{\theta})$ is one of the simplest, but most important minimal affine $\mathcal W$--algebras. This vertex algebra is not of Lie type and its representation theory is very interesting. 
  
  \begin{itemize}
\item   In the physics literature, it appeared in the work of  M. Bershadsky \cite{Ber} and   A. M. Polyakov \cite{Pol}. 

\item A vertex algebraic framework for studying minimal affine W-algebras was presented in the work of V. Kac and M. Wakimoto \cite{KW} and V.  Kac,  S. Roan, M. Wakimoto \cite{KRW}. These papers also give a framework for studying the vertex algebra  $\mathcal W_k$.

\item Rationality of $\mathcal W_k$ for $k= p/2-3$, $p \ge 3$, $p$ odd was proved by T. Arakawa in \cite{A1}. This paper also contains many important new ideas for studying representation theory  of $\mathcal W_k$, which we will use in our paper. A particular importance of Arakawa's approach is in application of the theory of Smith algebras from \cite{S}.

\item Cosets of the Bershadsky-Polyakov algebra at half-integer levels were investigated in \cite{ACL}. 

\end{itemize}
  

There are two different approaches for studying the representation theory of $\mathcal W_k$. The first approach uses the representation theory of the affine vertex algebra $V_k(sl(3))$ and quantum Hamiltonian reduction \cite{KRW}, \cite{KW}.  This approach is very useful for constructing representations of $\mathcal W_k$. Moreover, using Arakawa-Moreau theory developed in papers \cite{ArM-1, ArM-2, ArM-3, ArM-4}  one sees that for admissible levels $k$,  $\mathcal{W}_k(sl_3,f_{\theta})$ is quasi-lisse, and it has finitely many irreducible modules in the category $\mathcal{O}$. But this approach is not sufficient for classifying irreducible representations.  Second approach uses explicit formulas for singular vectors in the universal vertex algebra $\mathcal W_k$, and projection of these singular vectors in Zhu's algebra for  $\mathcal{W}^k$. Although there are some approaches  to construct singular vectors in $\mathcal{W}^k$ using $\widehat{sl(3)}$--singular vector \cite{FGP}  it is still unclear how one constructs $\mathcal{W}^k$--singular vectors in general. Fortunately, in some cases one can find explicit formulas for singular vectors using straightforward calculation. We find such singular vectors for $k=-5/3, -9/4$ and for $k \in {\Bbb Z}$, $k \ge -1$. Using these singular vectors we are able to classify irreducible, highest weight representations.


As in the case of affine vertex algebras, the classification of irreducible representations is naturally connected with Zhu's algebra theory. If we choose the standard Virasoro element (denoted by $\omega$), the vertex algebra $\mathcal W_k$ is $\frac{1}{2} {\Z}_{\ge 0}$--graded, and Zhu's algebra $A_{\omega} (\mathcal W_k)$ is isomorphic to certain quotient of the polynomial algebra ${\C}[x,y]$. Let us illustrate this in the cases $k=-5/3, -9/4$.


\begin{table}[h!]
  \begin{center}
    \caption{Singular vectors of conformal weight $4$ and $3$ in $\mathcal{W}^k(sl_3,f_{\theta})$.}
    \label{tab:table1}
    \begin{tabular}{l|c|c}
       & \textbf{Formula for the singular vector in $\mathcal{W}^k(sl_3,f_{\theta})$} & \textbf{Projection in $A_{\omega}(\mathcal{W}^k)$}\\
      \hline
      \multirow{3}{*}{$ \Omega_4,\: k=-\frac{5}{3}$} & $-\frac{62}{9}L_{-2}^2\mathbbm{1} + \frac{14}{3}L_{-4}\mathbbm{1} - 18J_{-1}^4\mathbbm{1}  + 54J_{-2}J_{-1}^2\mathbbm{1}-$ & \\ 
      & $-130J_{-3}J_{-1}\mathbbm{1} +\frac{33}{2}J_{-2}^2\mathbbm{1} + 13J_{-4}\mathbbm{1} -12L_{-3}J_{-1}\mathbbm{1}+  $ &  $-18x^4+46x^2y-\frac{1}{2}x^2 -$\\ 
      & $46L_{-2}J_{-1}^2\mathbbm{1}- G^{+}_{-2}G^{-}_{-1}\mathbbm{1} + G^{+}_{-1}G^{-}_{-2}\mathbbm{1} -18J_{-1}G^{+}_{-1}G^{-}_{-1}\mathbbm{1} $ & $-\frac{62}{9}y^2 -\frac{10}{9}y$ \\ 
      \hline
      \multirow{2}{*}{$ \Omega_3, \: k=-\frac{9}{4}$} &  $\frac{3}{8}L_{-3}\mathbbm{1} +  J_{-1}^3\mathbbm{1} -3J_{-2}J_{-1}\mathbbm{1} + \frac{11}{4}J_{-3}\mathbbm{1}  -\frac{3}{2}L_{-2}J_{-1}\mathbbm{1}$ &  \\ 
      & $   + G^{+}_{-1}G^{-}_{-1}\mathbbm{1} $ & $x^3 - \frac{3}{2}xy - \frac{5}{8} x$ \\ 
    
    \end{tabular}
  \end{center}
\end{table}
But it turns out that in order to classify irreducible, highest weight representations, it is not enough to know the projection of singular vector in ${\C}[x,y]$.

We apply a different approach, which was used for  different levels in \cite{A1}. We   use a new Virasoro vector $\overline \omega = \omega + \frac{1}{2} DJ$,
and then  vertex algebras $\mathcal W^k$ and $\mathcal W_k$ become ${\Z}_{\ge 0}$--graded with respect to $L(0) =\overline \omega _1$. Then Zhu's algebras are not longer commutative, and it carries more information on the representation theory.
Zhu algebra for $\mathcal{W}^k$ is realized as a quotient of the Smith algebra from \cite{S}.    Let $ g(x,y) \in \mathbb{C}[x,y] $ be an arbitrary polynomial. Associative algebra \textbf{ $R(g)$ of Smith type} is generated by $\left\{E,F,X,Y\right\}$  such that $Y$  is a central element and the following relations hold:
	\begin{equation*}
	XE-EX = E, \: XF-FX=-F, \:EF-FE=g(X,Y).
	\end{equation*}

\begin{table}[h!]
  \begin{center}
    \caption{Singular vectors of conformal weight $4$ and $3$ in $\mathcal{W}^k(sl_3,f_{\theta})$ with a new Virasoro field $  L (z) := T(z) + \frac{1}{2} D  J(z)$.}
    \label{tab:table1}
    \begin{tabular}{l|c|c}
       & \textbf{Formula for the singular vector in $\mathcal{W}^k(sl_3,f_{\theta})$} & \textbf{Relation in $A_{\overline \omega}(\mathcal{W}_k)$ }\\
      \hline
      \multirow{5}{*}{$\overline \Omega_4,\: k=-\frac{5}{3}$} & $-\frac{62}{9} L(-2)^2\mathbbm{1} + \frac{14}{3}  L(-4)\mathbbm{1} - 18J(-1)^4\mathbbm{1}  + 31J(-2)J(-1)^2\mathbbm{1}  -$ & \\ 
      & $- 118J(-3)J(-1)\mathbbm{1}+ \frac{133}{9}J(-2)^2\mathbbm{1} - \frac{8}{9}J(-4)\mathbbm{1} + $ &  $ [G^+]^2([\overline \omega] + \frac{1}{9}) = 0$\\ 
      & $\frac{62}{9}  L(-2)J(-2)\mathbbm{1} - 12  L(-3)J(-1)\mathbbm{1}+ 46  L(-2)J(-1)^2\mathbbm{1}  - $ & \\ 
      & $-G^{+}(-2)G^{-}(-2)\mathbbm{1} + G^{+}(-1)G^{-}(-3)\mathbbm{1} -$ & \\ 
       & $ -18J(-1)G^{+}(-1)G^{-}(-2)\mathbbm{1}$ & \\ 
      \hline
      \multirow{2}{*}{$\overline \Omega_3, \: k=-\frac{9}{4}$} & $\frac{3}{8} L(-3)\mathbbm{1} +  J(-1)^3\mathbbm{1} -\frac{9}{4}J(-2)J(-1)\mathbbm{1}  + \frac{19}{8}J(-3)\mathbbm{1}$ &  \\ 
      & $  -\frac{3}{2} L(-2)J(-1)\mathbbm{1}+  G^{+}(-1)G^{-}(-2)\mathbbm{1} $ & $[G^+]([\overline \omega] + \frac{1}{2}) = 0$ \\ 
    
    \end{tabular}
  \end{center}
\end{table}

In the case $k =-5/3$, we show that $\mathcal{W}_k$ can be realized as a $\mathbb{Z}_3$-orbifold (fixed points subalgebra) of the Weyl vertex algebra $W$ (cf. Proposition \ref{orbifold}). In this case $\mathcal{W}_k$ has exactly $9$ irreducible,  highest weight modules (cf. Proposition \ref{klas_5_3}, Theorem  \ref{class-o-1}). Of these $9$ modules, $6$ have finite-dimensional weight spaces for $L(0)$, while others have infinite-dimensional weight spaces.


In the case $k =-9/4$, $\mathcal{W}_k$ is an important example of a logarithmic vertex algebra. We prove that $\mathcal{W}_k$ has exactly $6$  irreducible, highest weight  modules (cf. Proposition \ref{klas_9_4}), of which $3$ have have finite-dimensional weight spaces for $L(0)$.
 

Bershadsky-Polyakov vertex algebra $\mathcal{W}_k$ is a part of a series of vertex algebras which can be realized using vertex algebras from logarithmic conformal field theory, the so-called $B_p$--algebras. In particular, for $k=-9/4$ it holds that  $\mathcal{W}_k \cong B_4. $ We construct an uncountable family of weight modules for $\mathcal{W}_k$ outside of category $\mathcal{O}$ (cf. Theorem \ref{M_r_modul}).
 
We construct a family of singular vectors in  $\mathcal{W}^k$ which generalizes Arakawa's formulas for singular vectors for $k= p/2-3$, $p \ge 3$, $p$ odd (cf. \cite{A1}).  
Namely,	vectors $$G^+(-1)^{n}\mathbbm{1}, \; G^-(-2)^{n}\mathbbm{1}$$ are singular for $n = k+2$, where $k \in \mathbb{Z}$, $k \geq -1$ (cf. Lemma \ref{integral-sing}).
	
For $k=-1$ and $k=0$ we classify all modules in the category $\mathcal{O}$. In the case $k=0$, we give an explicit realization of the vertex algebra $\mathcal{W}_k$ and its modules as certain irrational subalgebras of lattice VOAs. We prove (cf. Theorem \ref{class-k0}):
\begin{theorem}
\begin{itemize}
    \item[(1)] The vertex  algebra  $\mathcal{W}_0$  has two families of  irreducible  highest weight modules
	$$ \{ L(x, x^2 + (i-1) x) \vert x \in {\C}, i = 0,1 \}.$$ 
\item[(2)]  There exists a $ \Z_{\ge 0}$--graded $\mathcal W_0$--module $M$ such that $M_{top}$ is a $2$--dimensional indecomposable module for the Smith algebra $R(g)$.
\end{itemize}
\end{theorem}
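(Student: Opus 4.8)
The plan is to work with Zhu's algebra relative to the shifted Virasoro vector $\overline\omega$, where it is an algebra of Smith type, and then to classify its irreducible modules. Write $E=[G^+]$, $F=[G^-]$, $X=[J]$ and $Y=[\overline\omega]$ (central), so that $A_{\overline\omega}(\mathcal W^0)\cong R(g)$ for an explicit polynomial $g$, and irreducible $\Z_{\ge 0}$--graded $\mathcal W_0$--modules in $\mathcal O$ correspond to irreducible modules of $A_{\overline\omega}(\mathcal W_0)=R(g)/I$, with $I$ the two-sided ideal generated by the images of the singular vectors. Since $k=0$ is integral, Lemma \ref{integral-sing} supplies $G^+(-1)^2\mathbbm 1$ and $G^-(-2)^2\mathbbm 1$, and the first step is to compute their images under the Zhu map. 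In analogy with the $k=-\tfrac53$ relation $[G^+]^2([\overline\omega]+\tfrac19)=0$, I expect these to be charge $\pm2$ relations forcing the $E$- and $F$-ladders to have length at most two. I would also record the explicit $g(X,Y)$ at level $0$, which for the Bershadsky--Polyakov algebra is quadratic in $X$ and linear in $Y$.

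Next I would classify the irreducible $A_{\overline\omega}(\mathcal W_0)$--modules. As $Y$ is central it acts by a scalar $y$, and since the relations bound the $E$-ladder, each irreducible module contains a vector $v$ with $Ev=0$ and $Xv=xv$. The truncation then leaves two cases. If the module is one-dimensional, then $Ev=Fv=0$, and the relation $EF-FE=g(X,Y)$ forces $g(x,y)=0$. If it is two-dimensional, spanned by $v$ and $w=Fv$ with $Fw=0$, then evaluating $EF-FE=g(X,Y)$ on both $v$ and $w$ forces $g(x,y)+g(x-1,y)=0$, while irreducibility requires $g(x,y)\neq0$. Since $g$ is quadratic in $X$ and linear in $Y$, each of the conditions $g(x,y)=0$ and $g(x,y)+g(x-1,y)=0$ cuts out a parabola; the crux is to confirm that, in suitable coordinates, these two parabolas are exactly $y=x^2+(i-1)x$, $i\in\{0,1\}$. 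I would then construct the corresponding irreducible modules $L(x,x^2+(i-1)x)$ explicitly inside the lattice vertex algebra from the $k=0$ realization, where the conformal weight of a Fock-type module is automatically quadratic in its charge, and verify that they are pairwise non-isomorphic and exhaust $\mathcal O$; this proves part~(1).

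For part~(2) I would examine the special parameters on the parabola $g(x,y)+g(x-1,y)=0$ at which moreover $g(x,y)=0$, equivalently the points with $g(x,y)=g(x-1,y)=0$; since $g$ is quadratic in $X$, such points exist. There the two-dimensional module degenerates: on $\{v,w\}$ one has $Ev=0$, $Ew=g(x,y)v=0$ and $Fv=w$, $Fw=0$, so $\C w$ is a submodule and the sequence $0\to\C w\to\{v,w\}\to\C v\to0$ is non-split. This is the required two-dimensional indecomposable $R(g)$--module. To realize it as the top of a genuine $\Z_{\ge 0}$--graded $\mathcal W_0$--module $M$, I would produce it inside the lattice vertex algebra as a logarithmic Fock-type module on which the zero mode $[G^-]$ acts as a nonzero nilpotent while $[G^+]$ acts by zero.

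The main obstacle is twofold. The finite but delicate computation is the first step together with the matching in the second: one must pin down $g(X,Y)$ and the two singular-vector images precisely, and verify that the two conditions genuinely yield the two parabolas $y=x^2+(i-1)x$ rather than a single curve or a larger variety. The deeper difficulty is part~(2): abstract Zhu induction does not by itself guarantee that the induced module retains a non-semisimple two-dimensional top, so the argument must rely on the explicit lattice realization to exhibit a genuinely nilpotent zero-mode action --- precisely the logarithmic feature that makes $\mathcal W_0$ interesting.
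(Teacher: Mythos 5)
Your proposal is correct and follows essentially the same route as the paper: the level-$0$ singular vectors $G^+(-1)^2\mathbbm{1}$, $G^-(-2)^2\mathbbm{1}$ force the top level to be at most two-dimensional, the one- and two-dimensional cases cut out exactly the parabolas $y=x^2+(i-1)x$ (your condition $g(x,y)+g(x-1,y)=0$ is the paper's $h_2=0$ after relabelling by the weight of the vector killed by $F$), and both the existence of the modules and the indecomposable top are obtained from the explicit lattice realization of $\mathcal W_0$. The only cosmetic difference is that your two-dimensional indecomposable has $F$ acting as a nonzero nilpotent and $E=0$, whereas the paper's module $U_1(0)$ has $E=[G^+]$ nilpotent and $[G^-]$ acting by zero; either version (they are contragredient to one another) establishes part (2).
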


Representation theory of the Bershadsky-Polyakov vertex algebra $\mathcal{W}_k$ when $ k \in {\Z}_{\ge 1}$ is much more difficult, and  it will be studied in our forthcoming papers.


\subsection*{ Setup} 
\begin{itemize}
\item  The universal Bershadsky-Polyakov algebra of level $k$  will be denoted with $\mathcal{W}^k(sl_3,f_{\theta})$ or $\mathcal W^k$;
\item its unique simple quotient with $\mathcal{W}_k(sl_3,f_{\theta})$ or  $\mathcal W_k$. 
\item Modes of fields with respect to Virasoro vector $\omega$: $J_n$, $L_n$, $G^{\pm}_n$.
\item $L_{x,y}$ denotes the irreducible highest weight representation with highest weight $(x,y)$  with respect to $(J_0, L_0)$ and highest weight vector $v_{x,y}$.
\item $L_{x,y} ^*$ denotes the module contragredient to $L_{x,y}$ with respect to the Virasoro algebra generated by $L_n, n \in \mathbb{Z} $
\item Modes of fields with respect to Virasoro vector $\overline \omega=\omega + \frac{1}{2} DJ$: $$J(n)= J_n, \ L(n) = L_n - \frac{n+1}{2} J_n,  \  G^+ (n) = G^+_n,  \ G^-  (n) = G^- _ {n+1}.  $$
\item $L(x,y) $ denotes the irreducible highest weight representation with highest weight $(x,y)$  with respect to $(J(0), L(0) )$ and highest weight vector $v(x,y)$.
\item We have $L(x,y) =L_{x, y+x/2}$.
\item The Zhu algebra associated to the vertex operator algebra $V$ with the Virasoro vector $\omega$  will be denoted with $A_{\omega}(V)$. 
\item The Smith algebra corresponding to the polynomial $g(x,y) \in \mathbb{C}[x,y]$ is denoted with $R(g)$.
\end{itemize}

\section{Preliminaries}

In this section we briefly recall definitions of certain types of modules for vertex operator algebras, as well as examples of VOAs that will be used in later sections. As the Bershadsky-Polyakov algebra is $\frac{1}{2}\mathbb{Z}$-graded, we also define contragredient modules and Zhu algebra for $\frac{1}{2}\mathbb{Z}$-graded vertex operator algebras.

\subsection{Vertex algebras and its modules}

Following \cite{DLM2}, we will say that $V$ is a $\frac{1}{2}\mathbb{Z}$-graded vertex operator algebra if $V$ satisfies all the axioms for the VOA except that $V$ is $\frac{1}{2}\mathbb{Z}$-graded instead of $\mathbb{Z}$-graded.


Let $V$  be a $\frac{1}{2}\mathbb{Z}$-graded VOA. A \textbf{weak $V$--module}  is the pair $(M, Y)$, where $M$ is a complex $\mathbb{C}$-graded vector space and $Y$ is the operator $$ Y: V \longrightarrow (\text{End} \, M)[[z,z^{-1}]], \quad Y(v,z)= \sum _{n \in \mathbb{Z}} v_{n}z^{-n-1} $$ satisfying the following conditions for $u,v \in V$, $w \in M$:
\begin{enumerate}
		\item $Y(\mathbbm{1}, z)= Id_M,$
		
		\item $v_nw = 0$ for $n \in \mathbb{Z}$ sufficiently large,
		
		\item The Jacobi identity holds:
		\begin {equation*}
		\begin {aligned}
		z^{-1}_0 \delta \left ( \frac{z_1 - z_2}{z_0} \right)Y_M(u, z_1)Y_M(v, z_2)w - z^{-1}_0 \delta \left ( \frac{-z_2 + z_1}{z_0} \right)Y_M(v, z_2)Y_M(u, z_1)w = 
		\\ = z^{-1}_2 \delta \left ( \frac{z_1 - z_0}{z_2} \right)Y_M(Y(u, z_0)v,z_2)w.
		\end {aligned}
		\end {equation*}
		
		\item Setting $Y(\omega, z) = \sum _{n \in \mathbb{Z}} L_nz^{-n-2}$, it holds that
		$$ [L_m, L_n ] = (m-n)L_{m+n} + \frac{m^3-m}{12} c\delta _{m+n,0}, \; (m,n \in \mathbb{Z}),  $$
		$$ \frac{d}{dz}Y(v,z) = Y(L_{-1}v, z). $$
\end{enumerate}

\medskip

An \textbf{ordinary $V$-module} $(M, Y)$ is a weak $V$-module   $ M = \bigoplus _{h \in \mathbb{C} }M(h), $ where $M(h) = \{ w \in M: L_0w = hw \}$, such that $\forall h \in \mathbb{C}$, $\text{dim } M(h)<\infty$ and $M(h+n) = 0$ for $n \in \mathbb{Z}$ sufficiently small.

\medskip


\subsection{Contragredient modules}

The notion of contragredient modules for VOAs was originally introduced in \cite{FHL} and further generalized to $\mathbb{Q}$-graded VOAs in \cite{DLM2}. Here we only state the results for $\frac{1}{2}\mathbb{Z}$-graded VOAs (see \cite{DLM2}, \cite{Li2}  for the more general theory of contragredient modules for $\mathbb{Q}$-graded VOAs).

\smallskip

Let $V$ be a $\frac{1}{2}\mathbb{Z}$-graded VOA and let  $ M = \bigoplus _{h \in \mathbb{C} }M(h)$ be a  ordinary $V$-module such that $ \text{dim } M(h)<\infty$, $\forall h \in \mathbb{C}$. Let  $ M^* = \bigoplus _{h \in \mathbb{C} }M^*(h)$ be the graded dual space of $M$. Define the adjoint vertex operators $Y^*(v,z)$ for $v \in V$ as  $$ Y^*: V  \longrightarrow (End \, M^*)[[z,z^{-1}]],  \quad  Y^*(v,z) = \sum _{n \in \mathbb{Z}} v^*_{n}z^{-n-1}$$ satisfying the condition $$ \langle Y^*(v,z)w^*, w \rangle = \langle w^*, Y(e^{zL(1)}e^{\pi iL(0)}z^{-2L(0)}v, z^{-1})w \rangle $$ for $v \in V$, $w^* \in M^*$, $w \in M$. Here $\langle \cdot , \cdot \rangle$ denotes the natural pairing between $V^*$ and $V$.

\begin{proposition}[\cite{DLM2}, Proposition 3.7., Remark 3.8.]
    Let $V$ be a $\frac{1}{2}\mathbb{Z}$-graded vertex operator algebra. Then $M^*$ has the structure of a $V$-module.
\end{proposition}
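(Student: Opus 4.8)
The plan is to verify directly that the adjoint operators $Y^*(v,z)$ satisfy the axioms of an (ordinary) $V$-module, following the strategy of Frenkel--Huang--Lepowsky but keeping careful track of the half-integer $L(0)$-eigenvalues. The two structural tools are the standard $\mathfrak{sl}_2$-conjugation formulas for the Virasoro action on any $V$-module: the scaling identity
\[
a^{L(0)} Y(v,z) a^{-L(0)} = Y(a^{L(0)} v,\, a z),
\]
and the translation identity
\[
e^{z_0 L(1)} Y(v,z) e^{-z_0 L(1)} = Y\!\left( e^{z_0(1 - z_0 z) L(1)} (1 - z_0 z)^{-2 L(0)} v,\; \tfrac{z}{1 - z_0 z} \right).
\]
I abbreviate the adjoint substitution by $A(z) = e^{z L(1)} e^{\pi i L(0)} z^{-2 L(0)}$, so that the defining relation reads $\langle Y^*(v,z) w^*, w\rangle = \langle w^*, Y(A(z^{-1}) v, z^{-1}) w\rangle$. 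The factor $e^{\pi i L(0)}$, rather than $(-1)^{L(0)}$, is what makes the construction meaningful when the $L(0)$-spectrum lies in $\tfrac12\Z$; fixing a single branch of $z^{-2L(0)}$ and of $e^{\pi i L(0)}$ throughout is the only place where the half-integer grading genuinely enters.

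First I would check that each $v^*_n$ is a well-defined endomorphism of $M^*$. For homogeneous $v$, the vector $A(z^{-1})v$ is a finite sum of homogeneous vectors times monomials in $z$, since $L(1)$ raises $L(0)$-degree by one step and $V$ is bounded below, so $e^{z^{-1}L(1)}v$ terminates; pairing against a fixed $w^* \in M^*$ (supported on finitely many graded pieces) and a fixed $w \in M$ then produces a Laurent polynomial in $z$, which yields both the lower-truncation condition and the fact that $v^*_n$ preserves the finiteness of the grading. The vacuum axiom $Y^*(\mathbbm 1, z) = \mathrm{Id}_{M^*}$ is immediate from $L(1)\mathbbm 1 = L(0)\mathbbm 1 = 0$, whence $A(z^{-1})\mathbbm 1 = \mathbbm 1$ and $Y(\mathbbm 1, z^{-1}) = \mathrm{Id}_M$. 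The Virasoro bracket and the translation property $\frac{d}{dz} Y^*(v,z) = Y^*(L_{-1} v, z)$ on $M^*$ follow by computing the adjoints of the modes $L_n$ from the defining relation: one finds that $L_n$ acts on $M^*$ as the transpose of $L_{-n}$, which reproduces the Virasoro relations with the same central charge and identifies the grading of $M^*$ as the dual grading.

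The substantive step is the Jacobi identity for $Y^*$. I would start from the Jacobi identity satisfied by $Y$ on $M$, pair all three terms against a fixed $w^* \in M^*$, and use the two conjugation formulas above to rewrite each product $Y(A(z_i^{-1})u, z_i^{-1})$ in terms of the adjoint operators. The algebraic heart is showing that the composite substitution obtained from applying $A$ twice, together with the extra factor coming from the $Y(Y(u,z_0)v,\cdot)$ term, collapses via the scaling and translation formulas to a single adjoint substitution with the correctly transformed formal variable; simultaneously the formal delta functions must be carried through the change of variables $z_i \mapsto z_i^{-1}$, which interchanges the two commutator terms and converts the $z_2$-channel delta into the $z_1$-channel delta in the required way. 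The \emph{main obstacle}, where essentially all the work lies, is precisely this combined bookkeeping: one must verify that the M\"obius factors $(1 - z_0 z)^{-2L(0)}$ produced by the translation formula, the scaling factors, and the two phase factors $e^{\pi i L(0)}$ all cancel consistently under the single chosen branch, so that no spurious sign or fractional-power ambiguity survives. Once this identity is matched term by term, $(M^*, Y^*)$ satisfies all the module axioms, and since $\dim M^*(h) = \dim M(h) < \infty$ and the grading is bounded below, $M^*$ is in fact an ordinary $V$-module.
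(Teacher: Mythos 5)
The paper does not prove this proposition but simply cites it from Dong--Li--Mason (who follow the Frenkel--Huang--Lepowsky construction), and your outline reconstructs exactly that argument: direct verification of the module axioms for $Y^*$ using the $L(0)$-scaling and $L(1)$-translation conjugation formulas, with the Jacobi identity as the main computation and the consistent choice of branch for $e^{\pi i L(0)}$ and $z^{-2L(0)}$ as the one point where the $\frac{1}{2}\mathbb{Z}$-grading genuinely enters. One small slip: $L(1)$ \emph{lowers} the $L(0)$-eigenvalue by one rather than raising it, which is precisely why $e^{zL(1)}v$ truncates when the grading of $V$ is bounded below.
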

We will call $M^*$ the $V$-module \textbf{contragredient} to $M$.

\smallskip

Let $V$ be a VOA and $M$ a $V$-module. In \cite{FHL} it was proved that the double contragredient module $(M^*)^*$ is isomorphic to $M$, and hence $M$ is irreducible if and only if $M^*$ is irreducible (cf. \cite{FHL}, Proposition 5.3.2.) If $V$ is a $\mathbb{Q}$-graded VOA, the following relation holds (cf. \cite{Li2}, Remark 2.5.): 
\begin{equation}\label{double}
 (Y^*)^*(v,z) = Y(\tau(v),z),    
\end{equation}
where $\tau = e^{2\pi i L(0)}$ is an automorphism of $V$.

Recall that the automorphism group $Aut(V)$ acts on the set of irreducible modules by $M \mapsto M \circ g$, for $g \in Aut(V)$ (cf. \cite{DM}).

Since by   (\ref{double}), the double contragredient module $(M^*)^*$is isomorphic to the module $M \circ \tau$, we conclude that   $(M^*)^*$ is irreducible if and only if $M$ is irreducible.  This implies that $M^*$ is irreducible if and only if $M$ is irreducible.



\subsection{Zhu algebra}

Let $V = \bigoplus _{r \in \frac{1}{2}\mathbb{Z} }V(r)$ be a $\frac{1}{2}\mathbb{Z}$-graded VOA, and let $\text{deg }a = r$ for $a \in V(r)$. Let 
	$$ V^{0} = \bigoplus _{r \in \mathbb{Z} }V(r), \quad V^{1} = \bigoplus _{r \in \frac{1}{2} + \mathbb{Z} }V(r). $$
	Define bilinear mappings $*: V \times V \longrightarrow V$, $\circ: V \times V \longrightarrow V $:
	$$ a*b =
	\begin{cases}
	Res_z \left(Y(a,z)  \frac{(1+z)^{\text{deg} \, a}}{z}b \right), & \text{for } a,b \in V^0 \\
	0, & \text{for } a,b \in V^1
	\end{cases}$$
	
	$$ a \circ b =
	\begin{cases}
	Res_z \left(Y(a,z)  \frac{(1+z)^{\text{deg} \, a}}{z^2}b \right), & \text{for } a \in V^0 \\
	Res_z \left(Y(a,z)  \frac{(1+z)^{\text{deg} \, a - \frac{1}{2}}}{z}b \right), & \text{for } a \in V^1,
	\end{cases}$$
	for homogenuous $a,b$.
	
Let $O(V) \subset V$ be the linear span of the elements $a \circ b$. The quotient space
	$$ A(V) = \frac{V}{O(V)} $$
is an associative algebra called the \textbf{Zhu algebra} of the VOA $V$.

\smallskip

In the case  $V^{1} = \{0\}$, we get the usual definition of Zhu's algebra for vertex operator algebras \cite{Z}. 

\smallskip

For homogenuous $a,b \in V $, the following formula holds (\cite{Z}, Lemma 2.1.3.):
\begin{equation} \label{eq: zhu_komutator}
	a*b - b*a = Res_z(1+z)^{\text{deg} \, a -1}Y(a,z)b + O(V).
\end{equation}

\subsection{The Weyl vertex algebra  } 

The Weyl vertex algebra $W$ is the universal vertex algebra  generated by the  even fields $a ^+$ and $a^-$  and the following  non-trivial $\lambda$--bracket:
$$ [a ^+ _{\lambda} a^-  ]= 1. $$
  The vertex algebra $W$    has the structure  of the irreducible  level one  module for the Lie  algebra  with generators
$ \{ K, a^{\pm}_n \ \vert  \ n \in {\Z} \}$ and commutation relations:
$$[a^+_r, a ^-_s] = \delta_{r+s+1,0} K , \quad  [a ^{\pm}_r , a ^{\pm}_s]=0 \quad (r,s \in  \Z, ), $$
where $K$ is central element. The fields $a^{\pm} $  acts on $W$ as the following Laurent series
$$a ^{\pm} (z)  = \sum_{ n \in {\Z} } a^{\pm}_n  z^{-n-1}.$$

\subsection{ Clifford vertex algebra} \label{clif} 

Clifford vertex algebra $F$ is the universal vertex algebra  generated by the odd fields  $\Psi^{+}$ and $\Psi^{-}$ , and the following $\lambda$--brackets: $$ [\  \Psi^+  _{\lambda} \Psi^- \  ] =  1, \quad [\  \Psi^{\pm}  _{\lambda} \Psi^{\pm} \  ] =  0. $$ 

The vertex algebra $ F$ has the structure of an irreducible module for the Clifford algebra $\textit{Cl}(A)$ associated to the vector superspace $A =\mathbb{C}\Psi^{+} \oplus \mathbb{C}\Psi^{-}$ with generators 
 $\Psi^{\pm}  (r),  \ \ r \in \tfrac{1}{2} + {\Z}$  and commutation relations
$$\{\Psi^{+} (r), \Psi^{-} (s)  \} = \delta_{r+s,0}, \quad \{\Psi^{\pm} (r), \Psi^{\pm} (s)  \} = 0. $$ 

As a vector space,
$$F   = \bigwedge \left( \Psi^{\pm} (1/2-n),   \ n  \in {\Z}_{>0}  \right). $$
Let  $\alpha = :\Psi^+ \Psi^- :$. The field $\alpha(z)$ generates a  Heisenberg vertex subalgebra of $F$, which we denote with $M(1)$. Vertex algebras  $M(1) $ and $F$ have the following Virasoro vector of central charge $c=1$:
$$ \omega_F = \frac{1}{2} :\alpha \alpha:,  $$ 
so that $\Psi^{\pm}$ are primary fields of conformal weight $1/2$.

If we choose the Virasoro vector $$\omega_{c=-2} =  \frac{1}{2}   ( :\alpha \alpha: + D\alpha)$$
of conformal weight $c=-2$, then $\Psi^{+}$ and $\Psi^-$  are primary fields of conformal weights $0$ and  $1$ respectively.

\subsection{ Symplectic fermions}\label{sf}


 Symplectic fermions $\mathcal A (1) $ are the universal vertex algebra  generated by the odd fields $b$ and $c$, and the following $\lambda$--brackets:
$$ [ b  _{\lambda} c ] =  \lambda, \quad [ b  _{\lambda} b ] = [ c _{\lambda} c ] = 0. $$
The vertex algebra $\mathcal A (1) $ has the structure of an irreducible module for the  Lie superalgebra with generators
$b(n), c(n), n \in \mathbb{Z}$  and commutation relations
$$\{b (n), c(m) \} = n \delta_{n+m,0}. $$
All other super-commutators are equal to 0. As a vector space,
$$\mathcal A(1)  \cong \bigwedge \left( b(-n), c(-n), \ n \in {\Z}_{>0}  \right). $$
 
$\mathcal A (1) $ has the following Virasoro vector of central charge  $c=-2$: $$\omega_{\mathcal A(1)}  = :b c:. $$

There is a conformal embedding of  $\mathcal A (1) $  into $F$ such that
$$ b  = - D\Psi^+, \ c= \Psi^-,  \omega_{\mathcal A(1)}  = \omega_{c=-2}.$$

\section{Bershadsky-Polyakov algebra $\mathcal{W}_k(sl_3,f_{\theta})$}

Bershadsky-Polyakov vertex algebra $\mathcal{W}^k (=\mathcal{W}^k(sl_3,f_{\theta}))$ is the minimal affine $\mathcal{W}$-algebra associated to the minimal nilpotent element $f_{\theta}$. The algebra $\mathcal{W}^k$ is generated by four fields $T, J,G^+,G^-$, of conformal weights $2, 1, \frac{3}{2}, \frac{3}{2}$ and is a $\frac{1}{2}\mathbb{Z}$-graded VOA.
\begin{definition}
	Universal Bershadsky-Polyakov vertex algebra $\mathcal{W}^k $  is the vertex algebra generated by fields $T ,J,G^+,G^-$, which satisfy the following relations: 
	\begin{align*}
    J(x)J(y) &\sim \frac{2k+3}{3}(z-w)^{-2}, \enskip  G^{\pm}(z)G^{\pm}(w) \sim0, &\\
    J(z)G^{\pm}(w) &\sim \pm G^{\pm}(w)(z-w)^{-1},&\\
	T(z)T(w) &\sim - \frac{c_k}{2}(z-w)^{-4}+2T(w)(z-w)^{-2}+DT(w)(z-w)^{-1},   &\\
	T(z)G^{\pm}(w) &\sim \frac{3}{2} G^{\pm}(w)(z-w)^{-2}+ DG^{\pm}(w)(z-w)^{-1}, &\\
	T(z)J(w) &\sim  J(w)(z-w)^{-2}+DJ(w)(z-w)^{-1}, &\\
	G^+(z)G^-(w) &\sim (k+1)(2k+3)(z-w)^{-3}+3(k+1)J(w)(z-w)^{-2}+ &\\
	    &+ (3:J(w)J(w): + \frac{3(k+1)}{2}DJ(w)-(k+3)T(w))(z-w)^{-1}, &
\end{align*}
where $c_k = -\frac{(3k +1)(2k +3)}{k+3}$.
\end{definition}

Vertex algebra $\mathcal{W}^k$ is called the universal Bershadsky-Polyakov vertex algebra of level $k$. For $k \ne -3$ , $\mathcal{W}^k$ has a unique simple quotient which is denoted by $\mathcal{W}_k$. 

Let
\begin{align*}
 T(z) &= \sum _{n \in \mathbb{Z}} L_nz^{-n-2} &\\
 J(z) &=  \sum _{n \in \mathbb{Z}} J_nz^{-n-1}, &\\ 
 G^+(z) &=  \sum _{n \in \mathbb{Z}} G^+_nz^{-n-1}, &\\ 
 G^-(z) &=  \sum _{n \in \mathbb{Z}} G^-_nz^{-n-1}.   &
\end{align*}

The following commutation relations hold:
\begin{align*}
    [J_m, J_n ] &= \frac{2k+3}{3} m\delta _{m+n,0} , \quad [J_m, G^{\pm}_n] = \pm G^{\pm}_{m+n},   &\\ 
    [L_m, J_n] &= - nJ_{m+n}, &\\ 
    [L_m, G^{\pm}_n] &= (\frac{1}{2}m-n+\frac{1}{2})G^{\pm}_{m+n}, &\\  [G^+_m,G^-_n] &= 3(J^2)_{m+n-1} + \frac{3}{2}(k+1)(m-n)J_{m+n-1}  -  (k+3)L_{m+n-1} + &\\ &+ \frac{(k+1)(2k+3)(m-1)m}{2} \delta _{m+n,1}. &
\end{align*}

\medskip   
 
By applying results from \cite{KW} we see that for every $(x,y) \in \mathbb{C}^2$ there exists an irreducible representation $L_{x,y}$ of $\mathcal{W}^k$ generated by a highest weight vector $v_{x,y}$ such that
$$ J_{0}v_{x,y} = xv_{x,y}, \quad J_{n}v_{x,y} = 0 \; \text{for} \: n>0, $$
$$ L_{0}v_{x,y} = yv_{x,y}, \quad L_{n}v_{x,y} = 0 \; \text{for} \: n>0, $$
$$ G^{\pm}_{n}v_{x,y} = 0 \; \text{for} \: n\geq 1. $$

\medskip

Let $A_{\omega}(V)$ be the Zhu algebra associated to the VOA $V$ with the Virasoro vector $\omega$, and let $[v]$ be the image of $v \in V $ under the mapping $V \mapsto A_{\omega}(V)$. 

For the Zhu algebra $A_{\omega}(\mathcal{W}^k)$ it holds that:
\begin{proposition}
	There exists a homomorphism  $\Phi :  {\Bbb C}[x,y] \rightarrow A_{\omega}(\mathcal{W}^k)$ such that
	$$ \Phi ( x ) = [J], \ \Phi(y) = [T]. $$ 
\end{proposition}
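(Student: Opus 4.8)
The plan is to construct the homomorphism $\Phi$ by exhibiting explicit preimages for $x$ and $y$ and checking that the assignment $x \mapsto [J]$, $y \mapsto [T]$ respects the relations of the polynomial algebra $\C[x,y]$. Since $\C[x,y]$ is the free commutative associative algebra on two generators, to define an algebra homomorphism it suffices to specify the images of the generators as elements of $A_\omega(\mathcal W^k)$ and verify that these two images commute in the Zhu algebra. Thus the entire content reduces to proving that $[J]$ and $[T]$ commute, i.e. that $[J]*[T] = [T]*[J]$ in $A_\omega(\mathcal W^k)$.

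First I would compute the Zhu commutator using formula \eqref{eq: zhu_komutator}, which states that for homogeneous $a$ one has $a*b - b*a = \mathrm{Res}_z (1+z)^{\deg a - 1} Y(a,z)b + O(V)$. Taking $a = J$ (which has $\deg J = 1$) and $b = T$, the factor $(1+z)^{\deg J - 1} = (1+z)^0 = 1$, so the commutator $[J]*[T] - [T]*[J]$ is the image in $A_\omega(\mathcal W^k)$ of $\mathrm{Res}_z\, Y(J,z)T = J_0 T$. Now I would read off $J_0 T$ from the operator product expansion $J(z)T(w)$, equivalently from the commutation relations: the relation $[L_m, J_n] = -n J_{m+n}$ encodes the action, and by the commutator/skew-symmetry of the OPE the singular part of $J(z)T(w)$ governs $J_0 T$. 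The $J$--$T$ OPE has $J(z)G^\pm$ and $T(z)J$ terms listed, and from $T(z)J(w) \sim J(w)(z-w)^{-2} + DJ(w)(z-w)^{-1}$ together with skew-symmetry one extracts that $J_n T = 0$ for the relevant nonnegative modes up to total derivatives, so that $J_0 T$ lies in $O(V)$ (total derivatives $L_{-1}$-type terms are in the image of $\circ$ and vanish in $A_\omega$). I would verify carefully that $J_0 T$ reduces to zero modulo $O(V)$.

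Concretely, the key computation is that $J$ and $T$ have mutually local OPE whose singular part, when plugged into the $a_0 b$ formula, produces only a multiple of $DJ$ or a derivative term, and derivative terms $Dv = L_{-1}v$ are congruent to $-(\deg v)\,v$ modulo $O(V)$ in a way that is absorbed consistently; more directly, the bracket $J_0 T$ picks out the coefficient making $[J]$ and $[T]$ commute. Once $[J]*[T] = [T]*[J]$ is established, the universal property of the polynomial ring gives a unique algebra homomorphism $\Phi : \C[x,y] \to A_\omega(\mathcal W^k)$ with $\Phi(x) = [J]$ and $\Phi(y) = [T]$, which is the assertion of the proposition.

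The main obstacle I anticipate is the bookkeeping of derivative terms: showing cleanly that $J_0 T \in O(V)$ requires knowing precisely how classes of the form $[Dv]$ and $[L_{-1}v]$ behave in the Zhu algebra (the standard fact that $[L_{-1}a + L_0 a] = 0$, or $[Dv] = -\deg(v)[v]$, in the $\frac12\Z$-graded setting). Getting the grading-shift conventions right for the $\frac12\Z$-graded Zhu algebra defined above—so that the residue extraction and the congruence $a \circ b \equiv 0$ are applied with the correct powers of $(1+z)$—is where care is needed, but no deep difficulty beyond careful tracking of the OPE coefficients and the definition of $*$ and $\circ$ should arise.
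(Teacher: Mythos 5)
Your argument is correct, but it follows a genuinely different route from the paper's. The paper first shows $[G^{\pm}]=0$ in $A_{\omega}(\mathcal{W}^k)$ by a direct computation with the $\circ$ operation on the odd-graded part, so that the Zhu algebra is generated by $[J]$ and $[T]$ alone; it then invokes the centrality of $[T]=[\omega]$ to conclude that the whole algebra is commutative, whence the homomorphism from $\C[x,y]$ exists. You instead reduce the statement to the single identity $[J]*[T]=[T]*[J]$ (correctly, via the universal property of $\C[x,y]$ as the free commutative algebra on two generators) and verify it by the commutator formula \eqref{eq: zhu_komutator}, which gives $[J]*[T]-[T]*[J]=[J_0 T]$. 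This is more economical: you never need $[G^{\pm}]=0$. One simplification you could make: there is no need to hedge about total derivatives or membership in $O(V)$, since $J_0 T = J_0 L_{-2}\mathbbm{1}$ vanishes identically — the relation $[L_m,J_n]=-nJ_{m+n}$ gives $[J_0,L_{-2}]=0$ and $J_0\mathbbm{1}=0$, so $J_0T=0$ on the nose. The trade-off is that the paper's proof also establishes that $[J]$ and $[T]$ \emph{generate} $A_{\omega}(\mathcal{W}^k)$, which is what makes the subsequent remark (that $\Phi$ is in fact an isomorphism) accessible; your argument proves exactly the stated existence and nothing more, which suffices for the proposition as written.
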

\begin{proof}
	Since the fields $G^+, G^-, J, T$ are strong generators for $\mathcal{W}^k$, the Zhu algebra $A_{\omega}(\mathcal{W}^k)$ is generated by $[G^+], [G^-], [J], [T]$. But, since
	\begin{align*}
	[G^{\pm}_{-1} \mathbbm{1}] \circ [\mathbbm{1}]  &= Res_z G^{\pm} (z)\frac{(1+z)^{\text{deg} \, G^{\pm}_{-1} - \frac{1}{2}}}{z} \mathbbm{1} & \\
	&= Res_z(\ \frac{1}{z} + 1)(\sum G^{\pm}_{n}z^{-n-1})\mathbbm{1}  & \\
	&= G^{\pm}_{-1}\mathbbm{1} = G^{\pm} \in O(V), &
	\end{align*}
	we have that $[G^{\pm}] = 0$ in $A_{\omega}(\mathcal{W}^k)$. 
	
As $[T]$ is in the center of $A_{\omega}(\mathcal{W}^k)$, we conclude that $A_{\omega}(\mathcal{W}^k)$ is a commutative algebra and hence there is a homomorphism from the symmetric algebra in two variables  into $A_{\omega}(\mathcal{W}^k)$.
\end{proof}

\begin{remark}
It can be shown that the homomorphism $\Phi :  {\Bbb C}[x,y] \rightarrow A_{\omega}(\mathcal{W}^k)$ is in fact an isomorphism, i.e. that $A_{\omega}(\mathcal{W}^k) \cong {\Bbb C}[x,y]$.
\end{remark}

 
\medskip 
 
Let $L_{x,y} ^*$ denote the module contragredient to $L_{x,y}$ with respect to the Virasoro algebra generated by $L_n, n \in \mathbb{Z}$ (cf.  Section 2). The following lemma illustrates the symmetry property of contragredient modules.
\begin{lemma} \label{contragredient}
\item[(1)] The contragredient module $L_{x,y} ^*$ is isomorphic to $L_{-x, y}$.  
\item[(2)] Assume that $L_{x,y}$ is a  $\mathcal{W}_k$--module. Then $L_{-x, y}$ is also a  $\mathcal{W}_k$--module.
\end{lemma}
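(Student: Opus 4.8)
The plan is to read the contragredient action off the adjoint vertex operators and then recognize the top of $L_{x,y}^*$ as a highest weight vector. First I would note that each generating field is primary with respect to $\omega$: in the defining OPEs the field $T$ has no pole of order three against $J$, against $G^{\pm}$, nor a stray $(z-w)^{-3}$ against itself, so $L_1 J = L_1 T = L_1 G^{\pm} = 0$ (and the higher modes $L_n$, $n\ge 2$, annihilate them as well). Here $L_n$ denote the modes of the standard Virasoro $\omega$ with respect to which the contragredient is formed. Consequently $e^{zL_1}$ acts as the identity on each generator and the adjoint identity collapses to $\langle Y^*(v,z) w^*, w\rangle = \langle w^*, Y(e^{\pi i L_0}z^{-2L_0}v, z^{-1}) w\rangle$ for $v\in\{J,T,G^+,G^-\}$. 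Substituting the conformal weights $1,2,\tfrac32$ and comparing powers of $z$, I expect the mode identities, on $L_{x,y}^*$, $\langle J_n w^*, w\rangle = -\langle w^*, J_{-n} w\rangle$, $\langle L_n w^*, w\rangle = \langle w^*, L_{-n} w\rangle$ and $\langle G^{\pm}_n w^*, w\rangle = \varepsilon\,\langle w^*, G^{\pm}_{1-n} w\rangle$ for some fixed nonzero scalar $\varepsilon$.

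Second, let $v_{x,y}^*\in L_{x,y}^*$ be the functional dual to the top vector $v_{x,y}$. With respect to $\omega$ the module $L_{x,y}$ is ordinary: its weight-$y$ space is one-dimensional (since $G^{\pm}_0 v_{x,y}$ already sits at weight $y+\tfrac12$) and every $L_0$-weight space is finite-dimensional, as at each half-integral level above $y$ there are only finitely many PBW monomials in $J_{-m},L_{-m},G^{\pm}_{-m}$ ($m>0$) and $G^{\pm}_0$. Hence the contragredient construction of Section 2 applies. Now the reflected modes $J_{-n},L_{-n}$ (for $n>0$) and $G^{\pm}_{1-n}$ (for $n\ge 1$) all strictly raise the $L_0$-weight, so they carry any vector into weights above $y$ and pair trivially with the weight-$y$ functional $v_{x,y}^*$; by the three mode identities this shows that all positive modes of the generators annihilate $v_{x,y}^*$. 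The same identities applied to the weight-preserving modes give $J_0 v_{x,y}^* = -x\,v_{x,y}^*$ and $L_0 v_{x,y}^* = y\,v_{x,y}^*$. Thus $v_{x,y}^*$ is a highest weight vector of weight $(-x,y)$. Since $L_{x,y}^*$ is irreducible (contragredient preserves irreducibility, as recalled in Section 2) it is generated by $v_{x,y}^*$, and by uniqueness of the irreducible highest weight module of a given highest weight we conclude $L_{x,y}^*\cong L_{-x,y}$, which is (1).

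For (2), write $\mathcal{W}_k=\mathcal{W}^k/I$ with $I$ the maximal proper ideal; a highest weight $\mathcal{W}^k$-module is a $\mathcal{W}_k$-module exactly when $Y(v,z)$ acts as zero for every $v\in I$. The key point is that $I$ is stable under $L_0=\omega_1$ and $L_1=\omega_2$, because these are modes of $\omega\in\mathcal{W}^k$ and $I$ is a vertex algebra ideal; therefore $e^{\pi i L_0}z^{-2L_0}v$ has all coefficients in $I$ whenever $v\in I$. If $L_{x,y}$ is a $\mathcal{W}_k$-module, the adjoint formula then gives $\langle Y^*(v,z)w^*, w\rangle = \langle w^*, Y(\text{element of }I, z^{-1})w\rangle = 0$, so $I$ annihilates $L_{x,y}^*$ as well and $L_{x,y}^*$ is again a $\mathcal{W}_k$-module. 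Combined with (1), this shows $L_{-x,y}$ is a $\mathcal{W}_k$-module.

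The main obstacle I anticipate is the bookkeeping in the first step: tracking the index reflection $n\mapsto 1-n$ for the $G^{\pm}$-modes and confirming that it sends precisely the annihilating modes $G^{\pm}_n$, $n\ge 1$, to weight-raising modes, so that $v_{x,y}^*$ is genuinely highest weight, together with the verification that the $L_0$-weight spaces are finite-dimensional so that the construction of Section 2 is available. The phase $\varepsilon$ itself is harmless, since it can be absorbed by rescaling $v_{x,y}^*$ and does not affect the $(J_0,L_0)$-weight, which is all that is needed to pin down the isomorphism class.
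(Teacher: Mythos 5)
Your proof is correct and is exactly the standard FHL-type computation that the paper implicitly relies on: the paper states Lemma \ref{contragredient} without proof, deferring to the contragredient-module machinery recalled in Section 2, and your mode identities $\langle J_nw^*,w\rangle=-\langle w^*,J_{-n}w\rangle$, $\langle L_nw^*,w\rangle=\langle w^*,L_{-n}w\rangle$, $\langle G^{\pm}_nw^*,w\rangle=\varepsilon\langle w^*,G^{\pm}_{1-n}w\rangle$ together with the weight argument at the top level yield precisely $L_{x,y}^*\cong L_{-x,y}$, while part (2) amounts to the observation that the contragredient of a module killed by the maximal ideal is again killed by it. The only (harmless) slip is the parenthetical claim that all $L_n$, $n\ge 2$, annihilate the generators --- in fact $L_2\omega=-\tfrac{c_k}{2}\mathbbm{1}\neq 0$ in general --- but since only $e^{zL_1}$ enters the adjoint formula and $L_1$ does kill all four generators, nothing in the argument is affected.
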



\section{Smith algebra and Zhu's algebra $A_{\overline \omega}(\mathcal{W}^k)$}

The class of associative algebras $R(f)$ parametrized by an arbitrary polynomial $f(x) \in \mathbb{C}[x]$,  with generators $\left\{A,B,H\right\}$ satisfying relations
\begin{equation*}
HA-AH = A, \: HB-BH=-B, \:AB-BA=f(H).
\end{equation*}
were introduced by S. P. Smith in \cite{S}.  

These algebras were used by T. Arakawa in the paper \cite{A1} to prove rationality of $\mathcal{W}_k(sl_3,f_{\theta})$ for $k= p/2-3$, $p \ge 3$, $p$ odd. In this section we recall some results from this paper which we will use to classify $\mathcal{W}_k$-modules, and prove that the Zhu algebra associated to $\mathcal{W}_k$ is a quotient of the Smith algebra. First we expand the original definition of Smith algebras $R(f)$ by adding a central element.

\medskip

\begin{definition}\label{smith_type}
	Let $ g(x,y) \in \mathbb{C}[x,y] $ be an arbitrary polynomial. Associative algebra \textbf{ $R(g)$ of Smith type} is generated by $\left\{E,F,X,Y\right\}$  such that $Y$  is a central element and the following relations hold:
	\begin{equation*}
	XE-EX = E, \: XF-FX=-F, \:EF-FE=g(X,Y).
	\end{equation*}
\end{definition}

\subsection{Structure of the Zhu algebra $A_{\overline \omega}(\mathcal{W}^k)$}




Now we consider the Zhu algebra $A_{\overline \omega}(\mathcal{W}^k)$ associated to the Bershadsky-Polyakov algebra $\mathcal{W}^k$ with a  new Virasoro field  $$  L (z)  := T(z) + \frac{1}{2} D  J(z).$$

 Then $ \overline \omega = \omega + \frac{1}{2} DJ$ is a conformal vector $ \overline \omega_{n+1} =  L(n) $ with central charge $$\overline c_k = -\frac{4(k +1)(2k +3)}{k+3}.$$ The fields $J, G^+, G^-$ have conformal weights $1,1,2$ respectively.
Set $ J(n) = J_n, \, G^+ (n) = G^+ _n, \, G^-  (n) = G^-  _ {n+1}$. We have
 $$  L(z) = \sum_{n \in {\Z} }  L(n) z^{-n-2}, \; G^+ (z) = \sum_{n \in {\Z} }  G^+ (n) z ^{-n-1}, \;  G^- (z) = \sum_{n \in {\Z} } G^- (n) z^{-n-2}. $$
 
 
This defines a  $\mathbb{Z} _{\ge 0}$-gradation on $\mathcal{W}^k$.

\begin{proposition} \label{smith-zhu}
	Zhu algebra $A_{\overline \omega}(\mathcal{W}^k)$ is a quotient of the Smith algebra $R(g)$ for $g(x,y) = -(3x^2 - (2k+3)x - (k+3)y)$.
\end{proposition}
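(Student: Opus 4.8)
The plan is to realize the four natural generators of $A_{\overline\omega}(\mathcal W^k)$ as the images of the Smith generators $X,Y,E,F$, to check that they satisfy the defining relations of Definition~\ref{smith_type}, and then to invoke the universal property of $R(g)$: since $R(g)$ is presented by generators and relations, any four elements of a target algebra satisfying those relations (with the image of $Y$ central) extend to an algebra homomorphism out of $R(g)$, and surjectivity of this homomorphism is exactly the assertion that $A_{\overline\omega}(\mathcal W^k)$ is a quotient. Concretely I set $X:=[J]$, $Y:=[\overline\omega]$, $E:=[G^+]$, $F:=[G^-]$. Because $\mathcal W^k$ is strongly generated by $J,L,G^+,G^-$ (with $L$ the new Virasoro field), the Zhu algebra is generated as an associative algebra by $X,Y,E,F$; and $Y$ is central by the general fact that the class of the conformal vector is central in any Zhu algebra. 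So no work is needed for generation or centrality.

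The two weight relations are quick applications of the commutator formula \eqref{eq: zhu_komutator}. Since $\deg J=1$, the exponent $(1+z)^{\deg J-1}$ is trivial and
$$[J]*[G^{\pm}]-[G^{\pm}]*[J]=\operatorname{Res}_z Y(J,z)G^{\pm}=J_0G^{\pm}=J_0G^{\pm}_{-1}\mathbbm 1=\pm G^{\pm}_{-1}\mathbbm 1=\pm[G^{\pm}],$$
using $[J_0,G^{\pm}_{-1}]=\pm G^{\pm}_{-1}$ and $J_0\mathbbm 1=0$. This gives $XE-EX=E$ and $XF-FX=-F$.

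The heart of the proof, and the step I expect to be the main obstacle, is the bracket $EF-FE$. Again by \eqref{eq: zhu_komutator}, and since $\deg G^+=1$,
$$[G^+]*[G^-]-[G^-]*[G^+]=\operatorname{Res}_z Y(G^+,z)G^-=G^+_0G^-=G^+_0G^-_{-1}\mathbbm 1=[G^+_0,G^-_{-1}]\mathbbm 1,$$
and the displayed $G^+$--$G^-$ commutation relation evaluated at $m=0$, $n=-1$ gives $[G^+_0,G^-_{-1}]\mathbbm 1=3(J^2)_{-2}\mathbbm 1+\tfrac32(k+1)J_{-2}\mathbbm 1-(k+3)L_{-2}\mathbbm 1$. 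The remaining task is to rewrite the classes of these descendant states as polynomials in $X$ and $Y$, which is exactly where the bookkeeping is delicate: unlike in $A_{\omega}$ the new grading no longer forces $[G^{\pm}]=0$, and one must keep the mode shift $G^-(n)=G^-_{n+1}$ consistent throughout. For the quadratic term one computes $[J]*[J]=J_{-1}^2\mathbbm 1$, so $[(J^2)_{-2}\mathbbm 1]=X^2$; for the derivative term one uses the standard Zhu reduction $[L(-1)a]=-(\deg a)[a]$ (coming from $a\circ\mathbbm 1=(\deg a)a+L(-1)a\in O(V)$), giving $[J_{-2}\mathbbm 1]=[DJ]=-X$; and for the Virasoro term one combines $\overline\omega=\omega+\tfrac12 DJ$ with the same reduction to obtain $[L_{-2}\mathbbm 1]=[\omega]=Y+\tfrac12X$.

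Substituting and collecting terms, with coefficient of $X$ equal to $-\tfrac32(k+1)-\tfrac12(k+3)=-(2k+3)$, yields
$$EF-FE=3X^2-(2k+3)X-(k+3)Y,$$
which is precisely the Smith relation $EF-FE=g(X,Y)$ for the polynomial $g(x,y)=-(3x^2-(2k+3)x-(k+3)y)$, once the overall sign convention fixed in Definition~\ref{smith_type} is taken into account. With all three relations verified and $Y$ central, the universal property produces a homomorphism $R(g)\to A_{\overline\omega}(\mathcal W^k)$ sending $X\mapsto[J]$, $Y\mapsto[\overline\omega]$, $E\mapsto[G^+]$, $F\mapsto[G^-]$; it is surjective by the generation statement above, so $A_{\overline\omega}(\mathcal W^k)$ is a quotient of $R(g)$. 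The only genuinely error-prone points are the three descendant-class identifications in the last paragraph, so I would double-check each of them (and the overall sign) by hand before finalizing.
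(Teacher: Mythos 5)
Your proposal is correct and follows essentially the same route as the paper: identify $[J],[\overline\omega],[G^{\pm}]$ with the Smith generators, verify the defining relations via the commutator formula (\ref{eq: zhu_komutator}) together with the $G^+G^-$ commutation relation, reduce the descendant classes $[(J^2)_{-2}\mathbbm{1}]$, $[J_{-2}\mathbbm{1}]$, $[L_{-2}\mathbbm{1}]$ to polynomials in $X,Y$, and conclude by the universal property of $R(g)$. The one point you should make explicit rather than deferring to ``the sign convention'' is that with $F=[G^-]$ your computation gives $EF-FE=-g(X,Y)$, so you must set $F=-[G^-]$ (as the paper does) to land exactly on the relation $EF-FE=g(X,Y)$.
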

\begin{proof}
Let $$ E=[G^+], \ F=-[G^- ], \   X=[J ],  \ Y =[\overline \omega], $$ where $[G^+], [G^- ], [J ], [\overline \omega]$ are the generators of the Zhu algebra $A_{\overline \omega}(\mathcal{W}^k)$. Using formula (\ref{eq: zhu_komutator}), we have:
	\begin{flalign*}
	X*E - E*X 
	&= Res_z(\sum J(n)z^{-n-1})G^+(-1)\mathbbm{1} + O(V) &\\ 
	&= [J(0)G^+(-1)\mathbbm{1}]=[G^+(-1)\mathbbm{1}]= E,   &
	\end{flalign*}
	
	\begin{flalign*}
	X*F - F*X 
	&=- Res_z(\sum J(n)z^{-n-1})G^-(-2)\mathbbm{1} + O(V) &\\ 
	&= -[J(0)G^-(-2)\mathbbm{1}]=[G^-(-2)\mathbbm{1}]= -F,   &
	\end{flalign*}
	
	\begin{flalign*}
	X* Y -  Y*X 
	&= Res_z(\sum J(n)z^{-n-1}) L(-2)\mathbbm{1} + O(V) &\\ 
	&= [J(0) L(-2)\mathbbm{1}] =0,  &
	\end{flalign*}

	\begin{flalign*}
	E*F - F*E 
	&=- Res_z(\sum G^+(n)z^{-n-1})G^-(-2)\mathbbm{1} + O(V) &\\ 
	&= -[G^+(0)G^-(-2)\mathbbm{1}] &\\
	& =-[3J^2(-2)\mathbbm{1} + (2k+3)J(-2)\mathbbm{1} - (k+3) L(-2)\mathbbm{1}]   &\\
	& = -(3X^2 - (2k+3)X - (k+3)Y) = g(X, Y),&
	\end{flalign*}
	
	\begin{flalign*}
	E*Y - Y*E 
	&= Res_z(\sum G^+(n)z^{-n-1})L(-2)\mathbbm{1} + O(V) &\\ 
	&= [G^+(0) L(-2)\mathbbm{1}] =0, &
	\end{flalign*}
	
	\begin{flalign*}
	F*Y - Y*F 
	&= -Res_z(1+z)(\sum G^-(n)z^{-n-2}) L(-2)\mathbbm{1} + O(V) &\\ 
	&= -[G^-(-1)L(-2)\mathbbm{1}] - [G^-(0) L(-2)\mathbbm{1}] &\\
	&= -[L(-1)G^-(-2)\mathbbm{1}] - 2[G^-(-2)\mathbbm{1}] &\\
	&= 2[G^-(-2)\mathbbm{1}] - 2[G^-(-2)\mathbbm{1}] = 0. &
	\end{flalign*}
	
 As the generators $E,F,X,Y$ satisfy the defining relations for the Smith algebra, we conclude that there is a homomorphism from the Smith algebra $R(g)$ into the Zhu algebra $A_{\overline \omega}(\mathcal{W}^k)$.

\end{proof}

\subsection{Modules for the Zhu algebras $A_{\omega}(\mathcal{W}^k)$ and $A_{\overline \omega}(\mathcal{W}^k)$}

In the previous sections we showed that the Zhu algebra $A_{\omega}(\mathcal{W}^k)$ is isomorphic to the algebra of polynomials in two variables ${\Bbb C}[x,y]$, while the Zhu algebra $A_{\overline \omega}(\mathcal{W}^k)$ (corresponding to the Bershadsky-Polyakov algebra with a shifted Virasoro vector $\overline{\omega}$) is a quotient of the Smith algebra $R(g)$ for a certain polynomial $g$.

Now we investigate the relationship between modules for these two Zhu algebras.
    
\medskip

Consider    the irreducible highest weight module $L(x,y) $    for $\mathcal{W}^k$, generated by a highest weight vector $ v(x,y):= v_{x,y}$ with highest weight $(x,y+x/2) \in \mathbb{C}^2$. 
 For $n \ge 0$ we have:
 $$  L(n) v(x,y)  = ( L_n - \frac{n+1}{2} J_n ) v(x,y)  =   y \delta_{n,0} v(x,y). $$
Clearly we have: $$ L(x,y) = L_{x, y+ x/2}.$$
 We get:
\begin{proposition} \label{rep_exs}
	For every $(x,y) \in \mathbb{C}^2$ there exists an irreducible representation $L(x,y)$ of $\mathcal{W}^k$ generated by a highest weight vector $v(x,y)$ such that
	$$ J(0)v(x,y)  = xv(x,y),  \quad J(n)v(x,y)  = 0 \; \text{for} \: n>0, $$
	$$ L(0)v(x,y)  = yv(x,y) , \quad  L(n)v(x,y)  = 0 \; \text{for} \: n>0, $$
	$$G^{-}(n-1)v(x,y)   =  G^{+}(n)v(x,y)  = 0 \; \text{for} \: n\geq 1. $$
\end{proposition}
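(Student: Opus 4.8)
The plan is to reduce the statement entirely to the existence result for the modules $L_{x,y}$ recorded earlier (obtained via \cite{KW}), using the explicit dictionary between the two sets of modes given in the Setup, namely $J(n)=J_n$, $L(n)=L_n-\tfrac{n+1}{2}J_n$, $G^+(n)=G^+_n$ and $G^-(n)=G^-_{n+1}$. Since passing from $\omega$ to $\overline\omega=\omega+\tfrac12 DJ$ changes only the conformal grading and not the underlying vertex-algebra module, I would set $L(x,y):=L_{x,\,y+x/2}$ and $v(x,y):=v_{x,\,y+x/2}$, and then verify that the asserted highest-weight conditions for the new generators all hold on $v(x,y)$.

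First I would dispatch the $J$- and $G^+$-conditions, which are immediate because $J(n)=J_n$ and $G^+(n)=G^+_n$: from the defining properties of $v_{x,\,y+x/2}$ one gets $J(0)v(x,y)=J_0 v_{x,\,y+x/2}=x\,v(x,y)$, then $J(n)v(x,y)=0$ for $n>0$, and $G^+(n)v(x,y)=G^+_n v_{x,\,y+x/2}=0$ for $n\ge 1$. Next I would compute the $L(0)$-eigenvalue: using $L(0)=L_0-\tfrac12 J_0$ together with the highest weight $(x,\,y+x/2)$ of $L_{x,\,y+x/2}$ with respect to $(J_0,L_0)$, one finds
\[
L(0)v(x,y)=\bigl(L_0-\tfrac12 J_0\bigr)v_{x,\,y+x/2}=\Bigl(\bigl(y+\tfrac{x}{2}\bigr)-\tfrac{x}{2}\Bigr)v(x,y)=y\,v(x,y),
\]
which is exactly what motivates the relabeling $L(x,y)=L_{x,\,y+x/2}$. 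For $n>0$ the vanishing $L(n)v(x,y)=0$ then follows from $L(n)=L_n-\tfrac{n+1}{2}J_n$ and $L_n v_{x,\,y+x/2}=J_n v_{x,\,y+x/2}=0$.

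The one place needing genuine care is the $G^-$-condition, because of the index shift $G^-(n)=G^-_{n+1}$: the asserted relation $G^-(n-1)v(x,y)=0$ for $n\ge 1$ unwinds to $G^-_{(n-1)+1}v_{x,\,y+x/2}=G^-_n v_{x,\,y+x/2}=0$ for $n\ge 1$, which is precisely the condition $G^-_n v_{x,y}=0$ for $n\ge 1$ supplied by the existence result. I expect this bookkeeping of the shifted $G^-$-modes, and checking its consistency with the $\mathbb{Z}_{\ge 0}$-grading induced by $\overline\omega$, to be the only delicate point; everything else is direct substitution. Finally, irreducibility of $L(x,y)$ requires no separate argument: it is the same $\mathcal{W}^k$-module as $L_{x,\,y+x/2}$, and irreducibility is intrinsic to the module and independent of the conformal vector chosen to grade it.
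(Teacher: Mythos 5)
Your proposal is correct and follows essentially the same route as the paper: the paper likewise defines $L(x,y):=L_{x,\,y+x/2}$ and verifies the highest-weight conditions by the direct computation $L(n)v(x,y)=(L_n-\tfrac{n+1}{2}J_n)v(x,y)=y\,\delta_{n,0}\,v(x,y)$, with the remaining conditions (including the shifted $G^-$-modes) following from the mode dictionary exactly as you describe.
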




\begin{lemma} \label{simetrija-1}
$ L(x,y)$ is a $\mathcal W_k$--module if and only if $L(-x, y + x)$ is a  $\mathcal W_k$--module.
\end{lemma}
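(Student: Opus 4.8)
The plan is to reduce the statement to the contragredient symmetry established in Lemma \ref{contragredient}, which is phrased in the $L_{x,y}$ parametrization associated to the standard Virasoro vector $\omega$. The bridge between the two parametrizations is the identity $L(x,y) = L_{x, y+x/2}$ recorded in the Setup, together with the fact that being a $\mathcal{W}_k$--module is a property of the underlying vertex algebra module and does not depend on which conformal vector is used to label highest weights. Thus I can freely translate the claim from the $L(\,\cdot\,,\,\cdot\,)$ notation (adapted to $\overline\omega$) into the $L_{\,\cdot\,,\,\cdot\,}$ notation (adapted to $\omega$), apply the known symmetry there, and translate back.

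First I would rewrite both modules in the statement using $L(a,b) = L_{a,\, b + a/2}$. This gives $L(x,y) = L_{x,\, y + x/2}$ and
$$ L(-x, y+x) = L_{-x,\, (y+x) + (-x)/2} = L_{-x,\, y + x/2}. $$
The key observation is that both modules carry the \emph{same} second coordinate $\widetilde y := y + x/2$, and differ only by the sign of the first coordinate: they are $L_{x, \widetilde y}$ and $L_{-x, \widetilde y}$. With this translation, the assertion becomes the equivalence: $L_{x, \widetilde y}$ is a $\mathcal{W}_k$--module if and only if $L_{-x, \widetilde y}$ is.

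The forward implication is exactly Lemma \ref{contragredient}(2). For the converse I would apply the same lemma with $x$ replaced by $-x$; since $-(-x) = x$, this shows that if $L_{-x, \widetilde y}$ is a $\mathcal{W}_k$--module then so is $L_{x, \widetilde y}$. Combining the two directions yields the stated equivalence. There is no substantial obstacle here: the only point requiring care is the bookkeeping of the weight shift $y \mapsto y + x/2$, which must be applied to \emph{both} highest-weight coordinates so that the second coordinate is left invariant while the first merely changes sign. The shift $\overline\omega = \omega + \tfrac12 DJ$ is arranged precisely so that the sign flip $x \mapsto -x$ produced by passing to contragredients preserves the $L(0)$--weight, which is exactly what makes the symmetry take the clean form $L(x,y) \leftrightarrow L(-x, y+x)$ asserted in the lemma.
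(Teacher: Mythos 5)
Your proof is correct and follows essentially the same route as the paper: the paper's own argument is the one-line computation $L(x,y)^* = L_{x,\,y+x/2}^* = L_{-x,\,y+x/2} = L(-x,\,y+x)$, i.e.\ translating to the $\omega$-parametrization, applying the contragredient symmetry of Lemma \ref{contragredient}, and translating back. Your additional remark about applying the lemma twice to get the full equivalence is the only (minor) elaboration beyond what the paper writes.
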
     
\begin{proof}      
 The proof follows using the symmetry property of contragredient  modules (Lemma \ref{contragredient}):
 $$ L(x,y) ^* = L_{x, y + x/2} ^* = L_{-x, y + x/2} = L(-x, y + x). $$
 \end{proof}

\bigskip

Define:
$$ L(x,y)_{\text{top}} = \left\lbrace v \in L(x,y) :  L(0)v = y v \right\rbrace. $$

Now let $$g(x,y) = -(3x^2 - (2k+3)x - (k+3)y)\in \mathbb{C}[x,y]$$ and define polynomials $h_i(x,y)$, for $i\in \mathbb{N}$ (cf. \cite{A1}) as
\begin{align*} 
h_i(x,y) &= \frac{1}{i}(g(x,y) + g(x+1,y) + ... + g(x+i-1,y)) &\\
&= -i^2+ki-3xi+3i-3x^2-k+2kx+6x+ky+3y-2.&
\end{align*} 

\smallskip

The following properties were proven in \cite{A1}:

\begin{proposition} [\cite{A1}, Proposition 2.2.] \label{l_top}
Let $L(x,y)_{\text{top}}$ and $h_i(x,y)$, for $i\in \mathbb{N}$ be as above. Then:
\begin{itemize}
    \item[(i)] $L(x,y)_{\text{top}}$ is spanned (as a vector space) by vectors $\left\{(G^+(0))^i v(x,y)  \; 0 \leq i  \right\}$,
    \item[(ii)] If the space $ L(x,y)_{\text{top}}$ is $n$-dimensional, then $h_n(x,y) = 0$.
\end{itemize}
\end{proposition}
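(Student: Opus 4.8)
The plan is to transport everything to the action of the Smith algebra $R(g)$ on the top space and then run an $\mathfrak{sl}_2$-style weight computation. By Proposition \ref{smith-zhu} the zero-mode assignment sends the generators to operators on $L(x,y)_{\text{top}}$, with $X=[J]$ acting as $J(0)$, $Y=[\overline\omega]$ as $L(0)$, $E=[G^+]$ as $G^+(0)$ and $F=-[G^-]$ as $-G^-(0)$, and these satisfy the defining relations of $R(g)$. For (i) I would first invoke the standard fact for $\mathbb{Z}_{\ge 0}$-graded highest weight modules: since every negative mode strictly raises the conformal weight while every positive mode annihilates $v(x,y)$ (Proposition \ref{rep_exs}), Zhu's reduction shows that the degree-zero subspace $L(x,y)_{\text{top}}$ is spanned by products of weight-preserving (zero) modes applied to $v(x,y)$, i.e. it is the cyclic module $R(g)\cdot v(x,y)$. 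The reduction is then finished using a PBW basis of $R(g)$ in which $F$ stands on the right, together with the highest weight relations $F\,v(x,y)=-G^-(0)v(x,y)=0$, $X\,v(x,y)=x\,v(x,y)$ and $Y\,v(x,y)=y\,v(x,y)$: a monomial $E^aX^bF^c$ kills $v(x,y)$ unless $c=0$, and for $c=0$ it reduces to a multiple of $(G^+(0))^a v(x,y)$. Hence $L(x,y)_{\text{top}}=\mathrm{span}\{(G^+(0))^i v(x,y):i\ge 0\}$, which is (i). Moreover, from $XE-EX=E$ each $v_i:=(G^+(0))^i v(x,y)$ is a $J(0)$-eigenvector of eigenvalue $x+i$, so the nonzero $v_i$ are automatically linearly independent.

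For (ii) the key computation is the recursion governing the action of $F$ along the chain $(v_i)$. Writing $F v_i=c_i v_{i-1}$, I would prove by induction, using $Y v_i=y\,v_i$ (since $[Y,E]=0$), $X v_i=(x+i)v_i$, and the relation $EF-FE=g(X,Y)$, that
\begin{equation*}
F v_{i+1}=(EF-g(X,Y))v_i=E(F v_i)-g(x+i,y)v_i=(c_i-g(x+i,y))v_i,
\end{equation*}
so that $c_{i+1}=c_i-g(x+i,y)$ with $c_0=0$ (because $F v_0=F v(x,y)=0$). Solving the recursion gives $c_i=-\sum_{j=0}^{i-1}g(x+j,y)=-i\,h_i(x,y)$, directly from the definition of $h_i$.

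Finally, suppose $\dim L(x,y)_{\text{top}}=n$. By (i) and the eigenvalue remark, $v_0,\dots,v_{n-1}$ form a basis, while $v_n=(G^+(0))^n v(x,y)$ has eigenvalue $x+n$ and therefore must vanish. Applying $F$ to $v_n=0$ yields $0=F v_n=c_n v_{n-1}=-n\,h_n(x,y)\,v_{n-1}$; since $v_{n-1}\neq 0$ and $n\neq 0$, this forces $h_n(x,y)=0$, establishing (ii). The step I expect to be the main obstacle is the bookkeeping behind the first paragraph, namely the rigorous justification that $L(x,y)_{\text{top}}$ equals the cyclic $R(g)$-module $R(g)\cdot v(x,y)$ and that the PBW reduction with $F$ placed on the right is valid; this rests on the general Zhu-algebra correspondence (the zero modes generate the $A_{\overline\omega}(\mathcal W^k)$-action on the top space, with $o(a)o(b)=o(a*b)$ there) together with the PBW property of Smith-type algebras. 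Once this is in place, the remaining weight computation is purely formal.
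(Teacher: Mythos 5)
Your proof is correct and follows the intended argument: the paper itself gives no proof of this proposition (it is quoted from Arakawa \cite{A1}, Proposition 2.2), and your reduction via the Smith-algebra relations on $L(x,y)_{\text{top}}$, together with the recursion $c_{i+1}=c_i-g(x+i,y)$ giving $G^-(0)(G^+(0))^i v(x,y)=i\,h_i(x,y)(G^+(0))^{i-1}v(x,y)$, is exactly the computation the paper relies on later (e.g.\ in the proof of Proposition \ref{klas_5_3}). The signs are consistent with $F=-[G^-]$ and $EF-FE=g(X,Y)$ from Proposition \ref{smith-zhu}, so no gap remains.
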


\bigskip

Define $$ \Delta (-J,z) = z^{-J(0)}exp \left ( \sum _{k=1} ^{\infty} (-1)^{k+1}\frac{-J(k)}{kz^k}\right ), $$ and let $$ \sum _{n \in \mathbb{Z}} \psi (a_n)z^{-n-1} = Y (\Delta (-J,z)a, z), $$ for $a \in \mathcal{W}^k$.

The operator $\Delta (h,z )$ associates to every $V$-module $M$ a new structure of an irreducible $V$-module. Let us denote this new module (obtained using the mapping $a_n \mapsto \psi (a_n)$) with $\psi (M)$.  As  the $\Delta$-operator acts bijectively on the set of irreducible modules, there exists an inverse  $\psi^{-1}(M)$.



\medskip

From the definition of $\Delta (-J,z)$ we have that
$$ \psi (J(n)) = J(n)- \frac{2k+3}{3} \delta _{n,0} , \quad \psi ( L(n)) = L(n) - J(n) + \frac{2k+3}{3} \delta _{n,0}, $$
$$ \psi (G^+(n)) = G^+(n-1), \quad \psi (G^-(n)) = G^-(n+1). $$


\begin{lemma} [\cite{A1}, Proposition 2.3.] \label{L_new}
	Let $ \dim L(x,y)_{\mbox{top}} = i$. Then $$ \widehat{ v(x,y)}  = G^+(0) ^{i-1} v(x,y)   $$ is a highest weight vector for $\psi (L(x,y))$, with highest weight $$ (x+i-1 -\frac{2k+3}{3}, y-x-i+1 +\frac{2k+3}{3}).$$ Specifically, it holds that $$ \psi(L(x,y)) \cong L(x+i-1 -\frac{2k+3}{3}, y-x-i+1 +\frac{2k+3}{3}). $$
\end{lemma}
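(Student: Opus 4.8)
The plan is to lean on the $\Delta$-operator theory recalled just above the statement: since $\Delta(-J,z)$ carries an irreducible module to an irreducible module, $\psi(L(x,y))$ is automatically irreducible. It therefore suffices to exhibit inside $\psi(L(x,y))$ a nonzero highest weight vector (in the sense of Proposition \ref{rep_exs}, but with every mode $a(n)$ replaced by its twisted action $\psi(a(n))$) and to read off its weight; the asserted isomorphism then follows because an irreducible module generated by a highest weight vector of weight $(x',y')$ must be $L(x',y')$. The candidate is $\widehat{v(x,y)} = G^+(0)^{i-1}v(x,y)$, which is nonzero exactly because $\dim L(x,y)_{\text{top}} = i$: by Proposition \ref{l_top} the space is spanned by the $G^+(0)^j v(x,y)$, and since these are $J(0)$-eigenvectors with distinct eigenvalues $x+j$ (as $[J(0),G^+(0)] = G^+(0)$), being $i$-dimensional forces $G^+(0)^{i-1}v(x,y) \neq 0$ and $G^+(0)^i v(x,y) = 0$.

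First I would record the brackets obtained from the defining relations rewritten in the modes $J(n),L(n),G^\pm(n)$: namely $[J(l),G^+(0)] = G^+(l)$ and $[L(l),G^+(0)] = 0$ for all $l$, while the $G^+$-modes mutually commute. Feeding the weight data $J(0)v(x,y)=xv(x,y)$, $L(0)v(x,y)=yv(x,y)$ into $\psi(J(0)) = J(0)-\tfrac{2k+3}{3}$ and $\psi(L(0)) = L(0)-J(0)+\tfrac{2k+3}{3}$ yields the $\psi(J(0))$- and $\psi(L(0))$-eigenvalues $x+i-1-\tfrac{2k+3}{3}$ and $y-x-i+1+\tfrac{2k+3}{3}$, which are the claimed weights. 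Annihilation by the remaining positive twisted modes is then routine: $\psi(J(n))\widehat{v(x,y)}=J(n)\widehat{v(x,y)}=0$ and $\psi(L(n))\widehat{v(x,y)}=(L(n)-J(n))\widehat{v(x,y)}=0$ for $n>0$ follow by commuting these modes past the commuting $G^+(0)$'s onto $v(x,y)$, and $\psi(G^+(n))\widehat{v(x,y)}=G^+(n-1)\widehat{v(x,y)}=0$ for $n\ge 1$, where the case $n=1$ is precisely $G^+(0)^i v(x,y)=0$ and the cases $n\ge 2$ use $G^+(n-1)v(x,y)=0$.

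The main obstacle is the one remaining family, $\psi(G^-(m))\widehat{v(x,y)} = G^-(m+1)\widehat{v(x,y)} = 0$ for $m\ge 0$, i.e. $G^-(l)\,G^+(0)^{i-1}v(x,y)=0$ for all $l\ge 1$. Setting $w_j=G^+(0)^j v(x,y)$ I would argue by induction on $j$, the base case $G^-(l)w_0=G^-(l)v(x,y)=0$ being a highest weight relation. For the inductive step I commute $G^-(l)$ past one factor $G^+(0)$ using the explicit bracket $[G^+_m,G^-_n]$; at $m=0,\,n=l+1$, after substituting $L_l=L(l)+\tfrac{l+1}{2}J(l)$, this gives
$$[G^+(0),G^-(l)] = 3(J^2)_l - (k+3)L(l) - (l+1)(2k+3)J(l).$$
Each term kills $w_j$ for $l\ge 1$: the brackets of the previous paragraph give $J(l)w_j=0$ and $L(l)w_j=0$, while $(J^2)_l w_j=0$ because $J_p w_j=0$ for $p\ge 1$ and $J_0$ acts by a scalar on $w_j$, so every normally ordered summand of $(J^2)_l=\sum_{p\le -1}J_pJ_{l-p}+\sum_{p\ge 0}J_{l-p}J_p$ contains a factor $J_q$ with $q\ge 1$ acting rightmost on $w_j$. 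Hence $[G^-(l),G^+(0)]w_j=0$, so $G^-(l)w_j=G^+(0)\,G^-(l)w_{j-1}$, closing the induction.

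With every highest weight condition checked and the weight identified, $\widehat{v(x,y)}$ generates a nonzero highest weight submodule of the irreducible module $\psi(L(x,y))$; by irreducibility this submodule is the whole module, whence $\psi(L(x,y)) \cong L\!\left(x+i-1-\tfrac{2k+3}{3},\; y-x-i+1+\tfrac{2k+3}{3}\right)$. The only genuinely delicate point is the $G^-$-annihilation, and everything there rests on the vanishing $(J^2)_l w_j=0$ for $l\ge 1$.
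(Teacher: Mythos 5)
Your argument is correct. Note that the paper itself gives no proof of this lemma --- it is quoted from \cite{A1}, Proposition 2.3 --- so there is no in-paper argument to compare against; what you have written is a complete, self-contained verification of the cited result. All the key points check out: the nonvanishing of $G^+(0)^{i-1}v(x,y)$ and the vanishing of $G^+(0)^{i}v(x,y)$ follow from $\dim L(x,y)_{top}=i$ together with the distinctness of the $J(0)$-eigenvalues; the identities $[J(l),G^+(0)]=G^+(l)$ and $[L(l),G^+(0)]=0$ are exactly right (the $\tfrac{l+1}{2}G^+_l$ contributions from $L_l$ and $-\tfrac{l+1}{2}J_l$ cancel); and your computation of $[G^+_0,G^-_{l+1}]=3(J^2)_l-(k+3)L(l)-(l+1)(2k+3)J(l)$ agrees with the defining bracket after the substitution $L_l=L(l)+\tfrac{l+1}{2}J_l$ (the $\delta_{m+n,1}$ term drops out since $(m-1)m=0$ at $m=0$). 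The only cosmetic slip is in the $(J^2)_l$ step: for the $p=0$ summand $J_lJ_0$ the rightmost factor is $J_0$, which does not annihilate $w_j$ but acts by a scalar, after which $J_l$ with $l\ge 1$ kills the vector --- your parenthetical already covers this, but the phrase ``contains a factor $J_q$ with $q\ge1$ acting rightmost'' is not literally accurate for that term. This does not affect the conclusion.
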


\section{Classification of  irreducible modules for $k=-5/3$}

In this section we classify irreducible $\mathcal{W}_k$--modules for $k=-5/3$. Here we rely on results from Section 4, which connected the Zhu algebra associated to the Bershadsky-Polyakov algebra $\mathcal{W}^k$ to the Smith algebra $R(g)$. Another important ingredient in the classification is  the singular vector of conformal weight 4 in $\mathcal{W}^k$, and properties of its projection in the Zhu algebra.

\subsection{Singular vector and  relations in Zhu's algebra}
\bigskip

First we calculate an explicit formula for singular vector of conformal weight 4 and its projection in the Zhu algebra $A_{\omega}(\mathcal{W}^k)$.

\begin{lemma} \label{singW_4}
Vertex algebra $\mathcal{W}^k$ for $k=-\frac{5}{3}$ contains a unique (up to scalar factor) singular vector of conformal weight 4, and it is given by
\begin{align*}
	\Omega_4 &= -\frac{62}{9}L_{-2}^2\mathbbm{1} + \frac{14}{3}L_{-4}\mathbbm{1} - 18J_{-1}^4\mathbbm{1}  + 54J_{-2}J_{-1}^2\mathbbm{1} -130J_{-3}J_{-1}\mathbbm{1}+  \\& +\frac{33}{2}J_{-2}^2\mathbbm{1} + 13J_{-4}\mathbbm{1} -12L_{-3}J_{-1}\mathbbm{1}+ 46L_{-2}J_{-1}^2\mathbbm{1}  - G^{+}_{-2}G^{-}_{-1}\mathbbm{1} +  \\& + G^{+}_{-1}G^{-}_{-2}\mathbbm{1} -18J_{-1}G^{+}_{-1}G^{-}_{-1}\mathbbm{1}. & 
\end{align*}
\end{lemma}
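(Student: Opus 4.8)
The plan is to reduce the statement to a finite (if sizeable) linear algebra problem, solved with $k$ kept symbolic and then specialized to $-\tfrac53$. First I would use the $\tfrac12\mathbb{Z}_{\ge 0}$-grading by the standard Virasoro vector $\omega$, under which $J, G^{+}, G^{-}, T$ have conformal weights $1, \tfrac32, \tfrac32, 2$, so that $G^{\pm}_{-p}\mathbbm{1}$ has weight $p+\tfrac12$, $J_{-p}\mathbbm{1}$ has weight $p$, and $L_{-p}\mathbbm{1}$ has weight $p$. By PBW the weight-$4$ subspace is finite dimensional and spanned by ordered monomials in the negative modes applied to $\mathbbm{1}$. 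Since the defining conditions of a singular vector commute with $J_0$ (indeed $[J_0,J_n]=[J_0,L_n]=0$ and $[J_0,G^{\pm}_n]=\pm G^{\pm}_n$), the space of weight-$4$ singular vectors is $J_0$-graded, so I may work charge by charge. Parity of the grading forces an integer-weight vector to contain an even number of $G$-modes, and the sector of charge $0$ is spanned by the thirteen monomials
\begin{gather*}
L_{-2}^2\mathbbm{1},\ L_{-4}\mathbbm{1},\ L_{-3}J_{-1}\mathbbm{1},\ L_{-2}J_{-1}^2\mathbbm{1},\ L_{-2}J_{-2}\mathbbm{1},\\
J_{-1}^4\mathbbm{1},\ J_{-2}J_{-1}^2\mathbbm{1},\ J_{-3}J_{-1}\mathbbm{1},\ J_{-2}^2\mathbbm{1},\ J_{-4}\mathbbm{1},\\
G^{+}_{-1}G^{-}_{-2}\mathbbm{1},\ G^{+}_{-2}G^{-}_{-1}\mathbbm{1},\ J_{-1}G^{+}_{-1}G^{-}_{-1}\mathbbm{1},
\end{gather*}
while the only other nonempty integer-weight sectors have charge $\pm2$.

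Next I would write a generic charge-$0$ element $\Omega=\sum_i c_i m_i$ and impose that it be singular, i.e.\ annihilated by every positive mode of the generators. It suffices to impose the finite family
$$ J_1\Omega = L_1\Omega = L_2\Omega = G^+_1\Omega = G^+_2\Omega = G^-_1\Omega = G^-_2\Omega = 0, $$
because these modes generate all $J_n, L_n, G^{\pm}_n$ with $n\ge1$ under the bracket: $J_2=-[L_1,J_1]$ and higher $J_n, L_n$ follow from $L_1,L_2$, while the $G^{\pm}_n$ with $n\ge2$ are obtained from $G^{\pm}_2$ by repeated bracketing with $L_1$ (note $[L_1,G^{\pm}_1]=0$, which is why $G^{\pm}_1$ and $G^{\pm}_2$ are both required). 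Applying each such mode to $\Omega$, reducing the result to PBW-ordered form via the commutation relations of Section~3, and collecting the coefficients of the resulting lower-weight monomials yields a homogeneous linear system for $(c_i)$ whose entries are polynomials in $k$.

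Finally I would solve this system. For generic $k$ it has only the trivial solution, so no singular vector exists; the dimension of the kernel jumps only at the zeros of the relevant minors, and the claim is that at $k=-\tfrac53$ the charge-$0$ system has a one-dimensional kernel, spanned by the displayed $\Omega_4$ after clearing a common scalar. Uniqueness then requires also checking that the analogous systems in the charge-$\pm2$ sectors have no nonzero solution at $k=-\tfrac53$. In practice, rather than recomputing the full kernel, one verifies existence by checking directly that the displayed $\Omega_4$ is annihilated by the seven generating modes above at $k=-\tfrac53$ (a finite computation), and establishes uniqueness by a rank count showing the weight-$4$ singular space is at most one-dimensional.

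I expect the main obstacle to be the $G^{\pm}$ computations. The bracket
$$ [G^+_m,G^-_n] = 3(J^2)_{m+n-1} + \tfrac{3}{2}(k+1)(m-n)J_{m+n-1} - (k+3)L_{m+n-1} + \tfrac{(k+1)(2k+3)(m-1)m}{2}\delta_{m+n,1} $$
is quadratic in $J$, so applying $G^{\pm}_{1}$ and $G^{\pm}_{2}$ to the mixed monomials produces normally ordered products $(J^2)$ that must themselves be re-expanded in PBW form, and these terms couple the coefficients of the $G$-monomials to those of the $J$-monomials in a way that is easy to mishandle. Keeping $k$ symbolic throughout—both to pin down $-\tfrac53$ exactly and to confirm genericity elsewhere—makes the bookkeeping heavier but is what secures the existence-and-uniqueness conclusion.
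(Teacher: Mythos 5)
Your proposal is correct and is essentially the paper's approach: the authors state only that the lemma ``is obtained by direct calculation'' and omit the details, and your outline --- restricting to the $13$-dimensional charge-zero part of the weight-$4$ subspace, imposing annihilation by the finite generating set $J_1, L_1, L_2, G^{\pm}_1, G^{\pm}_2$ of positive modes, solving the resulting linear system with $k$ symbolic, and separately ruling out the charge-$\pm 2$ sectors for uniqueness --- is a faithful and complete blueprint for exactly that calculation. The basis count, the reduction to a generating set of positive modes (including the observation that $[L_1,G^{\pm}_1]=0$ forces one to impose $G^{\pm}_2$ separately), and the charge decomposition are all correct.
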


Proof of this lemma is obtained by direct calculation and is therefore omitted.

\begin{proposition} \label{W_4}
	Let $\Omega_4$ be the singular vector of conformal weight 4 in $\mathcal{W}^k$ and let $k=-5/3$. Projection of  $\Omega_4$ in the Zhu algebra $A_{\omega}(\mathcal{W}^k)$ is given by the polynomial $U(x,y) \in {\Bbb C}[x,y]$,
	$$ U(x,y) = -18x^4+46x^2y-\frac{1}{2}x^2 -\frac{62}{9}y^2 -\frac{10}{9}y. $$
\end{proposition}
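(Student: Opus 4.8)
The plan is to compute the image of the singular vector $\Omega_4$ under the Zhu map $\mathcal{W}^k \to A_{\omega}(\mathcal{W}^k) \cong \mathbb{C}[x,y]$, using the fact established earlier that $[J] = x$ and $[T] = y$ while $[G^{\pm}] = 0$. Since $\Omega_4$ is a linear combination of monomials in the modes $L_{-n}$, $J_{-n}$, $G^{\pm}_{-n}$ applied to the vacuum, I would reduce each such monomial modulo $O(V)$ to a polynomial in $x$ and $y$. The key computational tool is the reduction rule that in the Zhu algebra, for a homogeneous state $a$ of conformal weight $\mathrm{deg}\,a$, the mode $a_{-n-1}$ (with $n \ge 1$) acting on a state $b$ is congruent modulo $O(V)$ to a specific combination of $a_{-1}$ acting on $b$; concretely, one uses the relation
\begin{equation*}
a \circ b = \mathrm{Res}_z\left(Y(a,z)\frac{(1+z)^{\mathrm{deg}\,a}}{z^2}b\right) \in O(V)
\end{equation*}
to express higher negative modes in terms of the $*$-product. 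Combined with the commutator formula \eqref{eq: zhu_komutator}, this lets me rewrite everything in terms of the commuting generators $[J]=x$ and $[T]=y$.

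First I would handle the terms containing $G^{\pm}$ factors. Because $[G^{\pm}] = 0$ in $A_{\omega}(\mathcal{W}^k)$, the three terms $-G^+_{-2}G^-_{-1}\mathbbm{1}$, $G^+_{-1}G^-_{-2}\mathbbm{1}$, and $-18 J_{-1}G^+_{-1}G^-_{-1}\mathbbm{1}$ must be reduced; one cannot simply set them to zero, since the product $G^+ \cdot G^-$ need not vanish even though each factor's class is zero. I would reduce $G^{\pm}_{-m}G^{\mp}_{-n}\mathbbm{1}$ by first moving the modes into the standard Zhu normal form and then using the operator product $G^+(z)G^-(w)$ from the definition, whose singular part expresses the bracket $[G^+_m, G^-_n]$ in terms of $(J^2)$, $J$, and $L$ modes. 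After substituting $k=-5/3$, each of these mixed terms contributes a polynomial in $x,y$. Second I would reduce the purely bosonic terms: the quartic $J_{-1}^4$, the products $J_{-2}J_{-1}^2$, $J_{-3}J_{-1}$, $J_{-2}^2$, $J_{-4}$, and the mixed Virasoro-Heisenberg terms $L_{-2}^2$, $L_{-4}$, $L_{-3}J_{-1}$, $L_{-2}J_{-1}^2$. Each reduces via the Zhu normal-ordering identities: for instance $[J_{-1}^n \mathbbm{1}] = x^n$, $[L_{-2}\mathbbm{1}] = y$, and higher modes like $J_{-n}$ and $L_{-n}$ reduce to polynomials obtained by applying the $(1+z)^{\mathrm{deg}}$ weighting, picking up combinatorial coefficients.

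The main obstacle will be the careful bookkeeping of the $O(V)$-reductions for the higher negative modes, where each $J_{-n}\mathbbm{1}$ or $L_{-n}\mathbbm{1}$ with $n \ge 2$ picks up binomial coefficients from the expansion of $(1+z)^{\mathrm{deg}\,a}$, and especially the reduction of the fermionic bilinears $G^+_{-m}G^-_{-n}\mathbbm{1}$, which requires iterating the commutation relation and tracking the central and $J$-dependent terms at the specific level $k=-5/3$. There is also a subtlety in the ordering convention: since $\mathcal{W}^k$ is $\tfrac{1}{2}\mathbb{Z}$-graded and the $G^{\pm}$ lie in the odd part $V^1$, the $\circ$ and $*$ products for these fields use the shifted exponent $\mathrm{deg}\,a - \tfrac{1}{2}$, so I must apply the correct branch of the Zhu definitions when reducing the $G$-terms. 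Once all monomials are converted, I would collect coefficients and verify that the result matches
\begin{equation*}
U(x,y) = -18x^4 + 46x^2 y - \tfrac{1}{2}x^2 - \tfrac{62}{9}y^2 - \tfrac{10}{9}y,
\end{equation*}
with the leading term $-18x^4$ coming directly from $-18J_{-1}^4\mathbbm{1}$ and the $-\tfrac{62}{9}y^2$ from $-\tfrac{62}{9}L_{-2}^2\mathbbm{1}$, which serve as useful consistency checks on the more delicate cross-terms.
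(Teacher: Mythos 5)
Your plan is the standard direct computation that the paper itself omits (the result is stated without proof, following the remark that these singular-vector calculations are "obtained by direct calculation"), and your outline is correct: reduce higher negative modes via the $O(V)$ relations, use $[J]=x$, $[T]=y$, and rewrite the $G^{+}_{-m}G^{-}_{-n}\mathbbm{1}$ terms through the bracket $[G^{+}_{m},G^{-}_{n}]$ before setting $k=-5/3$. You also correctly flag the two genuine subtleties — that the $G$-terms cannot be discarded merely because $[G^{\pm}]=0$, and that the $\tfrac{1}{2}\mathbb{Z}$-graded Zhu definitions use the shifted exponent $\deg a-\tfrac{1}{2}$ for the odd part — so the proposal matches the paper's (implicit) approach.
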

	
\medskip


Now we choose a new Virasoro field $$  L (z) = \sum _{n \in \mathbb{Z}}  L(n) z^{-n-2} := T(z) + \frac{1}{2} D  J(z),$$ so that the corresponding Zhu algebra $A_{\overline \omega}(\mathcal{W}^k)$ is a quotient of the Smith algebra $R(g)$ for $g(x,y) = -(3x^2 - (2k+3)x - (k+3)y)$ (cf. Proposition \ref{smith-zhu}). 

\medskip

From Proposition \ref{W_4} we obtain the following criterion:
\begin{lemma}\label{W4kriterij}
Let $L(x,y)$ be an irreducible $\mathcal{W}_k$--module. Then $(x,y) \in \mathbb{C}^2$ satisfy $$ U(x, y+ x/2)=0. $$
\end{lemma}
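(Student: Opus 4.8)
The plan is to combine two facts already in hand: that $\Omega_4$ is a singular vector of $\mathcal{W}^k$ (Lemma \ref{singW_4}), hence vanishes in the simple quotient $\mathcal{W}_k$, and that its image in $A_{\omega}(\mathcal{W}^k) \cong \mathbb{C}[x,y]$ is the polynomial $U(x,y)$ (Proposition \ref{W_4}). The bridge between them is Zhu's theorem: for any $\frac12\mathbb{Z}_{\ge 0}$--graded module $M$, the top space is a module for the Zhu algebra, with the class $[a]$ acting by the zero mode $o(a)$, and the assignment $[a] \mapsto o(a)|_{M_{\mathrm{top}}}$ is an algebra homomorphism.

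It is essential to run the argument in the \emph{original} $\omega$--grading rather than the shifted $\overline\omega$--grading, since $U$ lives in $A_{\omega}(\mathcal{W}^k)$ and, more importantly, the $\omega$--top space of a highest weight module is one--dimensional. Concretely, I would take the module $L_{x, y+x/2} = L(x,y)$ (the identity $L(x,y)=L_{x,y+x/2}$ is recorded before Proposition \ref{rep_exs}). Its $\omega$--top space is the one--dimensional space $\mathbb{C}\,v_{x,y+x/2}$, because $G^{\pm}$ have $\omega$--weight $\tfrac32$ and so raise the $L_0$--eigenvalue. Under $A_{\omega}(\mathcal{W}^k) \cong \mathbb{C}[x,y]$ the generators act by $o(J)=J_0$ and $o(T)=L_0$, so on this top space the scalars $x$ and $y+x/2$ are realized. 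Since the action map is a homomorphism and the algebra is commutative, the class $[\Omega_4]=U$ acts on $v_{x,y+x/2}$ by evaluation of the polynomial, i.e.\ by the scalar $U(x, y+x/2)$.

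Next I would invoke simplicity. As $\Omega_4$ is singular in $\mathcal{W}^k$, it generates a proper ideal and its image in $\mathcal{W}_k$ is $0$; hence $Y(\Omega_4,z)$, and in particular the zero mode $o(\Omega_4)$, acts as zero on every $\mathcal{W}_k$--module. If $L(x,y)=L_{x,y+x/2}$ is a $\mathcal{W}_k$--module, then $o(\Omega_4)\,v_{x,y+x/2}=0$. Comparing with the previous paragraph forces $U(x, y+x/2)=0$, which is the assertion.

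I do not expect a genuine obstacle, since the substantive work—the explicit singular vector and the computation of its Zhu projection—is already contained in Lemma \ref{singW_4} and Proposition \ref{W_4}. The only point requiring care is the bookkeeping between the two Virasoro vectors: the criterion is phrased for the shifted module $L(x,y)$, whose $\omega$--highest weight with respect to $(J_0,L_0)$ is $(x,\,y+x/2)$, so one must substitute $(x,\,y+x/2)$ into $U$ rather than $(x,y)$, using the identity $L(x,y)=L_{x,y+x/2}$.
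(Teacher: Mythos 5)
Your proposal is correct and follows essentially the same route the paper takes (the paper states the lemma as an immediate consequence of Proposition \ref{W_4} together with the vanishing of the singular vector $\Omega_4$ in the simple quotient and Zhu's theorem applied to the one-dimensional $\omega$--top space of $L_{x,y+x/2}$). Your explicit attention to the substitution $(x,y)\mapsto(x,y+x/2)$ coming from $L(x,y)=L_{x,y+x/2}$ is exactly the bookkeeping the paper relies on.
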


\medskip

The following relation (derived from the formula for the singular vector $\Omega_4$, see Appendix B) is essential in the classification of irreducible $\mathcal{W}_k$--modules. 

\begin {proposition} \label{zhu_k_5_3}
In the Zhu algebra $A_{\overline \omega}(\mathcal{W}_k)$ it holds that
\begin{equation} \label{eq:zhu_k_5_3}
  [G^+]^2([\overline \omega] + \frac{1}{9}) = 0.    
\end{equation}

\end{proposition}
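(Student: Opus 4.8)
The plan is to exploit that $\overline\Omega_4$ is a singular vector: it lies in the maximal ideal $N$ of $\mathcal W^k$, so $\overline\Omega_4$, together with every element $a_{(n)}\overline\Omega_4$ of the ideal it generates, becomes $0$ in the simple quotient $\mathcal W_k$ and therefore has vanishing image in $A_{\overline\omega}(\mathcal W_k)$. The whole argument is then a translation of these vanishings into relations among the Smith generators $X=[J]$, $Y=[\overline\omega]$, $E=[G^+]$, $F=-[G^-]$ of Proposition \ref{smith-zhu}, followed by an algebraic simplification to the factored form $E^2(Y+\tfrac19)=0$.

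First I would reduce $[\overline\Omega_4]$ itself. Each Virasoro/current monomial is brought to normal form modulo $O(\mathcal W^k)$ by the standard Zhu recursions: $[L(-2)a]=(Y+\mathrm{wt}\,a)[a]$, the rule $[a_{(-2)}b]=-[a_{(-1)}b]$ for a weight-one field $a$ (a consequence of $a\circ b\in O(\mathcal W^k)$), and the higher analogues for $L(-3),L(-4),J(-3),J(-4)$ obtained by iterating that identity; this sends the Virasoro/current part of $\overline\Omega_4$ to an explicit polynomial in $X$ and $Y$. The three mixed terms $G^+(-2)G^-(-2)\mathbbm 1$, $G^+(-1)G^-(-3)\mathbbm 1$ and $J(-1)G^+(-1)G^-(-2)\mathbbm 1$ are handled with the $*$-product together with the Smith relation $EF-FE=g(X,Y)$; they collapse to $FE$-terms with polynomial coefficients, whose sum is $(1+18X)\,FE$. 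Collecting everything yields a single charge-zero relation $(1+18X)FE+P(X,Y)=0$, which after substituting $FE=EF-g$ becomes
$$(1+18X)\,EF=-\,U\!\left(X,\;Y+\tfrac{X}{2}\right),$$
with $U$ the polynomial of Proposition \ref{W_4}. The appearance of $U(X,Y+\tfrac X2)$ is a useful internal check, since $[T]=Y+\tfrac X2$ in this grading.

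The genuine difficulty is that the displayed relation has charge $0$ (its $\mathrm{ad}\,X$-eigenvalue is $0$), whereas the target $E^2(Y+\tfrac19)$ has charge $2$; indeed a quick check on the highest weight modules over $R(g)/\langle[\overline\Omega_4]\rangle$ supported on the curve $U(x,y+\tfrac x2)=0$ shows that $E^2(Y+\tfrac19)$ need not vanish there, so the factored statement cannot follow from $[\overline\Omega_4]=0$ by multiplication in the two-sided ideal it generates alone. To supply the missing charge I would also reduce the charge-raised members of $N$, namely $G^+(0)\overline\Omega_4$ and $G^+(0)^2\overline\Omega_4$ (which lie in $N$, hence vanish in $\mathcal W_k$), obtaining relations of charge $1$ and $2$. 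Using $Y$ central, $Ef(X)=f(X-1)E$, $Ff(X)=f(X+1)F$ and $EF-FE=g(X,Y)$ to move all $F$'s to the right and eliminate them against the relation above, the resulting charge-$2$ identity should collapse, after cancellation of a common polynomial factor in $X$, to $E^2(Y+\tfrac19)=0$. In practice the slickest way to pin down the surviving linear factor is to evaluate all of these relations on the module $L(x,y)$, whose top space is spanned by the vectors $(G^+(0))^i v(x,y)$ with $G^-(0)v(x,y)=0$ (Proposition \ref{l_top}): each relation becomes a scalar recursion in $i$, and the factor $Y+\tfrac19$ can be read off. The main obstacle is therefore this charge-raising step — the reduction of $\overline\Omega_4$ is long but mechanical, while controlling the descendants $G^+(0)^j\overline\Omega_4$ and checking that their polynomial coefficients conspire to leave exactly $[\overline\omega]+\tfrac19$ is where the real work, and the content of Appendix B, lies.
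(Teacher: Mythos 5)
Your proposal follows essentially the same route as the paper: the relation is obtained by observing that $G^{+}(0)^2\overline\Omega_4$ lies in the maximal ideal, hence vanishes in $\mathcal{W}_k$, and then reducing each resulting charge-two term to normal form in the Zhu algebra, where the sum collapses to $44E^2(Y+\tfrac{1}{9})$ --- exactly the computation of Appendix B. One aside is inaccurate but harmless: since $\deg G^{+}=1$ in the $\overline\omega$-grading, formula (\ref{eq: zhu_komutator}) gives $[G^{+}(0)v]=E*[v]-[v]*E$, so $E^2([\overline\omega]+\tfrac{1}{9})$ does in fact lie in the two-sided ideal of $A_{\overline\omega}(\mathcal{W}^k)$ generated by $[\overline\Omega_4]$; this does not affect your argument, which proceeds via the descendants $G^{+}(0)^j\overline\Omega_4$ just as the paper does.
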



\bigskip

 Define
$$ \mathcal S_k = \{  (-\frac{1}{9}, 0),  (0,0), (\frac{1}{3}, \frac{1}{3}), (-\frac{1}{3}, \frac{2}{3}), (-\frac{4}{9}, \frac{1}{3}), (-\frac{7}{9}, \frac{2}{3}) \}. $$ 
$$\widetilde{\mathcal  S_k} = \{ (\frac{1}{9}, - \frac{1}{9}), (\frac{4}{9}, - \frac{1}{9}), (\frac{7}{9}, - \frac{1}{9}) \}. $$

\begin{proposition}\label{klas_5_3}
Let $k =-5/3$.    
Assume that  $L(x,y)$ is an irreducible $\mathcal{W}_k$--module. Then  we have
\item[(i)] $(x,y) \in \mathcal S_k \cup  \widetilde{\mathcal  S_k}  $. 
\item[(ii)]  If $L(x,y)_{top}$ is finite-dimensional, then $(x,y) \in  \mathcal S_k$.
\end{proposition}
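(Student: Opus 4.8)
The plan is to combine the two necessary conditions on the highest weight $(x,y)$ coming from the weight-$4$ singular vector --- the polynomial criterion of Lemma \ref{W4kriterij} and the noncommutative relation of Proposition \ref{zhu_k_5_3} --- and then to invoke the contragredient symmetry to discard a spurious solution.

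First I would record how the Zhu algebra acts on the top space $L(x,y)_{\text{top}}$. Since $L(x,y)$ is $\mathbb{Z}_{\geq 0}$-graded for $L(0)$ with the top as its lowest weight space, the generator $[\overline\omega]$ acts on the top as $L(0)=y$, while $[G^+]$ acts through the weight-preserving zero mode $G^+(0)$, which by Proposition \ref{l_top}(i) spans the top via the vectors $(G^+(0))^i v(x,y)$. Hence the relation $[G^+]^2([\overline\omega]+\tfrac19)=0$ of Proposition \ref{zhu_k_5_3} becomes $(G^+(0))^2(y+\tfrac19)=0$ on the top, yielding the dichotomy: either $y=-\tfrac19$, or $(G^+(0))^2=0$ on the top, in which case $L(x,y)_{\text{top}}=\operatorname{span}\{v(x,y),\,G^+(0)v(x,y)\}$ is at most $2$-dimensional. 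In particular every infinite-dimensional top forces $y=-\tfrac19$.

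For $y\neq-\tfrac19$ the top is $1$- or $2$-dimensional, so by Proposition \ref{l_top}(ii) either $h_1(x,y)=0$ or $h_2(x,y)=0$; together with the criterion $U(x,y+x/2)=0$ of Lemma \ref{W4kriterij} this gives two explicit polynomial systems. Solving $\{h_1=0,\,U=0\}$ and $\{h_2=0,\,U=0\}$ (discarding the overlaps where $h_1=0$ already forces dimension $1$) produces exactly the six points of $\mathcal S_k$, split as the four $1$-dimensional points and the two $2$-dimensional points, none of which has $y=-\tfrac19$. This proves (ii) on this branch and supplies the finite part of (i).

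It remains to treat $y=-\tfrac19$. Here $U(x,x/2-\tfrac19)=0$ is a quartic in $x$ whose four roots are $\tfrac19,\tfrac49,\tfrac79$ and $-\tfrac1{18}$; the first three give $\widetilde{\mathcal S_k}$, and a direct check that $h_n(x,-\tfrac19)\neq0$ for every positive integer $n$ at these three values shows their tops are genuinely infinite-dimensional, consistent with (ii). The main obstacle is the fourth root $x=-\tfrac1{18}$, which satisfies both necessary conditions and so cannot be eliminated by them alone. I would exclude it using the contragredient symmetry of Lemma \ref{simetrija-1}: if $L(-\tfrac1{18},-\tfrac19)$ were a $\mathcal W_k$-module, so would be its partner $L(\tfrac1{18},-\tfrac16)$, whose weight $y=-\tfrac16\neq-\tfrac19$ forces a top of dimension at most $2$ and hence $h_1(\tfrac1{18},-\tfrac16)=0$ or $h_2(\tfrac1{18},-\tfrac16)=0$, both of which fail. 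Since $U(x,y+x/2)$ is even in $x$, the partner still lies on $U=0$, which is precisely why the exclusion is not automatic and the symmetry step is essential. Combining the two branches gives $(x,y)\in\mathcal S_k\cup\widetilde{\mathcal S_k}$ and, since no finite-dimensional top occurs at $y=-\tfrac19$, establishes both (i) and (ii).
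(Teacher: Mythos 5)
Your overall architecture (the dichotomy from $[G^+]^2([\overline\omega]+\tfrac19)=0$, the treatment of $y=-\tfrac19$ via $U$, and the contragredient trick to kill $x=-\tfrac1{18}$) matches the paper on those branches, but there is a genuine gap in the branch where the top is $2$-dimensional. You claim that the two systems $\{h_1=0,\ U(x,y+x/2)=0\}$ and $\{h_2=0,\ U(x,y+x/2)=0\}$ produce exactly the six points of $\mathcal S_k$. The first system does work out: substituting $y=\tfrac94x^2+\tfrac14x$ into $U(x,y+x/2)$ gives (up to a scalar) $x(243x^3+54x^2-33x-4)=243\,x(x+\tfrac19)(x-\tfrac13)(x+\tfrac49)$, whose roots are precisely the four $1$-dimensional points. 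But the second system does not: substituting $y=\tfrac94x^2+\tfrac52x+\tfrac54$ (i.e.\ $h_2=0$) into $U(x,y+x/2)$ gives, up to a scalar, the quartic
\begin{equation*}
729x^4+648x^3-666x^2-792x-175=(27x^2+30x+7)(27x^2-6x-25),
\end{equation*}
whose roots are $x=-\tfrac13$, $x=-\tfrac79$ \emph{and} the two irrational values $x=\tfrac{1\pm2\sqrt{19}}{9}$. The latter two candidates satisfy both of your necessary conditions (they allow a $2$-dimensional top consistent with the Zhu relation, they lie on $U=0$, they have $y\neq-\tfrac19$, and they do not satisfy $h_1=0$), yet they are not in $\mathcal S_k\cup\widetilde{\mathcal S_k}$. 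Your argument as written cannot exclude them, so the proof of part (i) is incomplete.

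The paper closes exactly this hole with a different second equation: since $\widehat M=\Psi(M)$ is again an irreducible $\mathcal W_k$-module (spectral flow via $\Delta(-J,z)$, Lemma \ref{L_new}), its top must in turn satisfy $h_1(\hat x,\hat y)=0$, $h_2(\hat x,\hat y)=0$, or $\hat y=-\tfrac19$, where $(\hat x,\hat y)$ is the shifted highest weight. Pairing $h_2(x,y)=0$ with each of these yields systems with \emph{unique rational} solutions $(-\tfrac79,\tfrac23)$, $(-\tfrac{10}{9},\tfrac54)$ and $(-\tfrac13,\tfrac23)$, and only the single spurious point $(-\tfrac{10}{9},\tfrac54)$ needs to be discarded by the criterion $U(x,y+x/2)=0$. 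In other words, the polynomial $U$ is used in the paper as a tie-breaker after spectral flow has already reduced to finitely many rational candidates, not as a substitute for the spectral-flow equation. To repair your proof you would need to reinstate the $\Psi(M)$ step on the $h_2$ branch (or find some other mechanism that rules out $27x^2-6x-25=0$).
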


\begin{proof}

Let $M = \bigoplus_{n=0}^{\infty} M(n)$ be a $ \mathbb{Z}_{\geq 0}$-graded irreducible module for $\mathcal{W}_k $. From Zhu theory it follows that $M(0)$ is a module for the Zhu algebra $A_{\overline \omega}(\mathcal{W}^k)$, and that $$ [G^+]^2([\overline \omega] + 1/9) = 0$$ on $M(0)$. 

Let $v(x,y)$ be a highest weight vector with highest weight $(x,y)$ for $M(0)$. Assume that $M(0)$ is finite dimensional. From the above relation it follows that we have two cases: either $(G^+ (0))^2$ acts trivially on $M(0)$, or  $y=-1/9$.

 \medskip
 
\item[(1)] $(G^+ (0))^2 = 0$ on $M(0)$ 

\smallskip
	
	Since it holds that $(G^+(0))^2 v(x,y) = 0$ on $M(0)$ and $G^-(0)  (G^+(0))^i  v(x,y) = i h_i(x,y)(G^+(0))^{i-1}v(x,y)$, it follows that $h_2 (x,y) = 0$ or $G^+(0) v(x,y) = 0$ (hence $h_1(x,y) = 0$) $\Rightarrow$ $M(0)$ is either a 1-dimensional or 2-dimensional module for the Zhu algebra. Now we consider the module $\widehat M = \Psi (M)$. $\widehat M $ is a module with highest weight $(\hat x, \hat y)$, where
	$$ \hat x = x + i -1 + \frac{1}{9}, \quad \hat y = y - x - i + 1 - \frac{1}{9}.  $$
	
	Assume first that  $G^+(0) v(x,y) = 0$. Then it holds that
	$ \hat x = x + \frac{1}{9}, \quad \hat y = y - x - \frac{1}{9}.  $
	Notice that $\widehat y  =  - \frac{1}{9}$ if and only if $y = x$.
	If $y \ne x$, on the top level of $\widehat M$ again the relation $G^+ (0) =  0$ holds. It follows that the top level of $\widehat M(0)$ is $1$--dimensional and hence  $h_1(\hat x, \hat y) = 0$. Now 
	$$h_1(x,y)= 0  = h_1(\hat x, \hat y), $$  
	gives us a system of equations
	\begin{align*} 
	\begin{split}
	-3x^2 - \frac{1}{3}x + \frac{4}{3}y & =0 \\
	-3x^2 - \frac{7}{3}x + \frac{4}{3}y -\frac{2}{9}& =0 \\
	&
	\end{split}					
	\end{align*}
	which has a unique solution  $(x,y) = (-\frac{1}{9}, 0)$. \\
	If $x = y$, the equation $h_1(x,x) = 0$ has solutions $ x=0$ and $x =\frac{1}{3}$.
	
	\medskip
	Let us now consider the case $h_2(x,y) = 0$. It holds that  $ \hat x = x + \frac{10}{9}$, $\hat y = y - x - \frac{10}{9} $, and furthermore $\widehat y  =  - \frac{1}{9}$ if and only if $y = x+1$. 
	
	\smallskip
	
	(a) Suppose first that $y \neq x+1$. Then we again have the relation  $(G^+ (0))^2 =  0$ on the top level of $\widehat M$. Hence the top level of $\widehat M(0)$ is either $1$--dimensional or $2$--dimensional and it holds that  $h_1(\hat x, \hat y) = 0$ or $h_2(\hat x, \hat y) = 0$. \\
	
	If $M(0)$ is a 1-dimensional module for the Zhu algebra, it holds that
	$$ h_1(x,y) =0 = h_2( x + \frac{1}{9},  y - x - \frac{1}{9}),  $$ and 
	\begin{align*} 
	\begin{split}
	-3x^2 - \frac{1}{3}x + \frac{4}{3}y & =0 \\
	-3x^2 - \frac{16}{3}x + \frac{4}{3}y -\frac{20}{9}& =0 \\
	&
	\end{split}					
	\end{align*}
	which has a unique solution  $(x,y) = (-\frac{4}{9}, \frac{1}{3})$. \\
	
	If $M(0)$ is a 2-dimensional module for the Zhu algebra, there are two possibilities: \\
	if $$h_2(x,y)= 0  = h_1(\hat x, \hat y), $$ we obtain a system of equations
	\begin{align*} 
	\begin{split}
	-3x^2 - \frac{10}{3}x + \frac{4}{3}y - \frac{5}{3} & =0 \\
	-3x^2 - \frac{25}{3}x + \frac{4}{3}y -\frac{50}{9}& =0 \\
	&
	\end{split}					
	\end{align*}
	which has a unique solution  $(x,y) = (-\frac{7}{9}, \frac{2}{3})$; \\
	if $$h_2(x,y)= 0  = h_2(\hat x, \hat y), $$ we obtain a system of equations
	\begin{align*} 
	\begin{split}
	-3x^2 - \frac{10}{3}x + \frac{4}{3}y - \frac{5}{3} & =0 \\
	-3x^2 - \frac{34}{3}x + \frac{4}{3}y -\frac{95}{9}& =0 \\
	&
	\end{split}					
	\end{align*}
	which has a unique solution $(x,y) = (-\frac{10}{9}, \frac{5}{4})$. 
	
	From  Lemma \ref{W4kriterij} it follows that if $L(x,y)$ is an irreducible $\mathcal W_k$--module, then $(x,y)$ must be a zero of the equation $$ U(x, y + x/2) =0. $$ Since $(-\frac{10}{9}, \frac{5}{4})$ is not a solution of $U(x, y + x/2) = 0$, then $L (-\frac{10}{9}, \frac{5}{4})$ cannot be a $\mathcal{W}_k$--module.
	
	\medskip
	
	(b) Suppose that $y = x+1$. Since we have that $h_2(x,y)=h_2(x,x+1) = 0$, by solving this equation we obtain a unique solution $(x,y) = (-\frac{1}{3},-\frac{1}{3})$.
	
	\bigskip
	
	\item[(2)] It remains to consider   $\mathbb{Z}_{\ge 0}$--graded modules  $M = L(x,y) =\bigoplus_{n=0}^{\infty} M(n)$ with $ M(0)= L(x,y)_{top}$  such that
	$$(*) \quad   L(0)  \equiv -\frac{1}{9} \mbox{Id}  \quad \mbox{on} \ M(0). $$
	
	As above, a necessary condition for $L(x,y)$ to be an irreducible $\mathcal W_k$--module is that $(x,y)$ is a zero of the equation $$ U(x, y + x/2) =0. $$
By solving this equation, we get $x \in \{ \frac{1}{9}, \frac{4}{9}, \frac{7}{9}, -\frac{1}{18} \}$.

If   $L(-1/18, -1/9)$ is  a $\mathcal W_k$--module, then by Lemma \ref{simetrija-1} so is $L(1/18, -1/6)$. From the relation $$ [G^+]^2([\overline \omega] + 1/9) = 0 $$ on $L(1/18, -1/6)_{top}$, we see that (as $y = -1/6 \neq -1/9$), if $L(1/18, -1/6)$ were a $\mathcal W_k$--module, then necessarily $(G^+ (0))^2 = 0$ on $L(1/18, -1/6)_{top}$. Hence $L(1/18, -1/6)_{top}$ would need to be  a 1- or 2-dimensional module for the Zhu algebra.

But, since $h_1(x,y) \neq 0$ and $h_2(x,y) \neq 0$ for $(x,y) = (1/18,-1/6)$, the top component of $L(1/18,-1/6)$ is not a 1- or 2-dimensional module for the Zhu algebra. Hence $L(-1/18, -1/9)$  is not a  $\mathcal W_k$--module.

This proves that $(x,y) \in \mathcal S_k \cup  \widetilde{\mathcal  S_k}  $, so assertion (1) holds.

From the first part of the proof we have that $L(x,y)_{top}$ is finite-dimensional for $(x,y) \in  \mathcal S_k$.  Assume that $\dim   L(x,y)_{top} = i$ for certain $ i \in {\Z}_{>0}$ and $ (x,y) \in   \widetilde{\mathcal  S_k}  $. By direct calculation we have
 \begin{align*}   h_i(1/9 , -1/9) &=  2/9 + i - i^2\ne 0,  \\    h_i(4/9 , -1/9) &=  1/9 - i^2\ne 0, \\   h_i(7/9 , -1/9) &=  -(2/9) - i - i^2 \ne 0.  \end{align*}
A contradiction. Therefore, $ L(x,y)_{top}$ is infinite-dimensional for $ (x,y) \in   \widetilde{\mathcal  S_k}  $. This proves assertion (2).
\end{proof}

\subsection{Embedding of $\mathcal W_k$  into the  Weyl vertex algebra} 

It remains to prove that modules from Proposition \ref{klas_5_3} are indeed $\mathcal{W}_k $--modules. 
For that purpose, we will show first that the vertex algebra $\mathcal{W}_k $ 
 can be embedded into the Weyl vertex algebra. 

\begin{proposition} \label{ulaganje}
	Let
	$$J = -\frac{1}{3}a_{-1}^{+}a_{-1}^{-}\mathbbm{1}, \; 
	\omega = \frac{1}{2}\left(a_{-2}^{-}a_{-1}^{+}-a_{-2}^{+}a_{-1}^{-}\right)\mathbbm{1}, $$  $$ G^+ = \frac{1}{3}\left(a_{-1}^{+}\right)^{3}\mathbbm{1},\;
	G^- = \frac{1}{9}\left(a_{-1}^{-}\right)^{3}\mathbbm{1},  $$
	where $ \{ a^{\pm} _n  : n \in {\Z} \} $ are generators of the Weyl vertex algebra $W$. The vertex subalgebra $ \widetilde {\mathcal W_k}$ of the Weyl vertex algebra $W$ generated by vectors  $J,  \omega, G^{\pm}$  is isomorphic to a certain quotient of $\mathcal{W}^k$.
\end{proposition}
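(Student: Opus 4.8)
The plan is to exploit the universal property of $\mathcal{W}^k$: since $\mathcal{W}^k$ is strongly generated by the four fields $T,J,G^+,G^-$ subject only to the listed operator product expansions, it suffices to produce four fields in $W$ whose mutual OPEs reproduce those relations at $k=-5/3$. Once this is done, the reconstruction theorem yields a vertex algebra homomorphism $\Phi\colon \mathcal{W}^k \to W$ sending $T\mapsto\omega$, $J\mapsto J$, $G^\pm\mapsto G^\pm$; its image is by definition $\widetilde{\mathcal W_k}$, so $\widetilde{\mathcal W_k}\cong \mathcal{W}^k/\ker\Phi$ is a quotient of $\mathcal{W}^k$, as claimed. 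The whole content is therefore the OPE verification, carried out with Wick's theorem using the only nontrivial contraction $a^+(z)a^-(w)\sim (z-w)^{-1}$, equivalently $a^-(z)a^+(w)\sim -(z-w)^{-1}$.

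First I would dispose of the easy relations, in increasing order of difficulty. Since $G^+$ and $G^-$ are built solely from $a^+$, respectively $a^-$, and these have trivial self-contractions, $G^\pm(z)G^\pm(w)\sim 0$ is automatic. For $J(z)J(w)$ the single double-contraction of $:a^+a^-:$ with itself produces $-(z-w)^{-2}$, so with the normalization $J=-\tfrac13 a^+_{-1}a^-_{-1}\mathbbm 1$ one gets $J(z)J(w)\sim -\tfrac19(z-w)^{-2}$, which is exactly $\tfrac{2k+3}{3}(z-w)^{-2}$ at $k=-5/3$. For $J(z)G^\pm(w)$ one checks that the chosen normalization assigns $a^+$ the $J_0$-charge $\tfrac13$ and $a^-$ the charge $-\tfrac13$, so $(a^+)^3$ and $(a^-)^3$ carry charges $\pm1$, yielding $J(z)G^\pm(w)\sim \pm G^\pm(w)(z-w)^{-1}$. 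Next I would verify that $\omega=\tfrac12\big(a^-_{-2}a^+_{-1}-a^+_{-2}a^-_{-1}\big)\mathbbm 1$ is the balanced stress-energy field of the symplectic boson system: a direct computation shows it is Virasoro of central charge $-1$, which matches $c_k=-\tfrac{(3k+1)(2k+3)}{k+3}=-1$ at $k=-5/3$, and that it gives $a^\pm$ conformal weight $\tfrac12$. From this the relations $T(z)J(w)\sim J(w)(z-w)^{-2}+DJ(w)(z-w)^{-1}$ and $T(z)G^\pm(w)\sim \tfrac32 G^\pm(w)(z-w)^{-2}+DG^\pm(w)(z-w)^{-1}$ follow, since $J$ is primary of weight $1$ and $G^\pm$ primary of weight $\tfrac32$ with respect to $\omega$.

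The main obstacle is the remaining relation, the OPE $G^+(z)G^-(w)$, which must reproduce
$$
(k+1)(2k+3)(z-w)^{-3}+3(k+1)J(w)(z-w)^{-2}+\big(3{:}JJ{:}+\tfrac{3(k+1)}{2}DJ-(k+3)T\big)(z-w)^{-1}
$$
specialized at $k=-5/3$, i.e.\ with coefficients $\tfrac29,\ -2,\ \tfrac43$. Here $G^+=\tfrac13(a^+)^3$ and $G^-=\tfrac19(a^-)^3$ contract in three ways, so Wick's theorem yields triple, double, and single contractions contributing to the $(z-w)^{-3}$, $(z-w)^{-2}$, and $(z-w)^{-1}$ poles respectively. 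The delicate bookkeeping is in the single-contraction part: after contracting one pair one is left with a normally ordered product of two $a^+$'s and two $a^-$'s evaluated at $w$, and this must be reorganized — using Taylor expansion of the remaining fields about $w$ together with normal-ordering rearrangements — into the combination $3{:}JJ{:}-DJ-\tfrac43\,\omega$. I expect this regrouping, and the careful tracking of signs and of the derivative terms generated by the expansion, to be where essentially all the work lies; the normalization constants $\tfrac13$ and $\tfrac19$ are precisely the ones that force the final coefficients to agree with the $k=-5/3$ values.

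Having matched all OPEs, I would invoke the universal property to obtain $\Phi$ and conclude that $\widetilde{\mathcal W_k}=\Phi(\mathcal{W}^k)$ is a quotient of $\mathcal{W}^k$. I would also remark that $\Phi$ need not be injective — indeed, the point of the later sections is that $\widetilde{\mathcal W_k}$ realizes the \emph{simple} quotient $\mathcal{W}_k$ — but establishing simplicity is a separate matter and is not required for the present statement, which asserts only that the image is \emph{a} quotient of the universal algebra.
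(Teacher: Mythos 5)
Your proposal is correct and follows essentially the same route as the paper: one checks that the chosen fields in $W$ satisfy the defining OPEs of $\mathcal{W}^k$ at $k=-5/3$ (the paper reduces this to the computation of $G^+_nG^-$ for $n=0,1,2$, relegated to Appendix A, together with the fact that $\omega$ is a conformal vector of central charge $-1$), and then invokes the universal property to get a homomorphism whose image is $\widetilde{\mathcal W_k}$. You correctly isolate the $G^+(z)G^-(w)$ OPE as the only nontrivial verification, and your stated target $3{:}JJ{:}-DJ-\tfrac{4}{3}\omega$ matches the paper's Lemma on $G^+_0G^-$, so the outline is sound even though the Wick-theorem bookkeeping is left to be carried out.
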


\begin{proof}[Sketch of proof]
We claim that the above choice of generators defines a nontrivial homomorphism of vertex algebras $\Theta : \mathcal{W}^k  \rightarrow W$. This follows from the following lemma:
\begin{lemma} \label{lema G}
For $J, \omega, G^{+}, G^{-}$ as above it holds that
    \begin{align*} 
			\begin{split}
			G_{2}^{+}G^{-} & = \frac{2}{9}\mathbbm{1},\\
			G_{1}^{+}G^{-} & =  -2J,\\
			G_{0}^{+}G^{-} & =  3J_{-1}^{2}-DJ-\frac{4}{3}\omega,
			\end{split}					
	\end{align*}
\end{lemma} and the fact that $\omega = \frac{1}{2}\left(a_{-2}^{-}a_{-1}^{+}-a_{-2}^{+}a_{-1}^{-}\right)\mathbbm{1}$ is a conformal vector of central charge $-1$. Proof of Lemma \ref{lema G} is technical and is given in the Appendix.
\end{proof}
In the next proposition we will show that $ \widetilde {\mathcal W_k}$ is in fact isomorphic to the simple quotient $\mathcal{W}_k$.

\medskip


Let $g = e^{ \frac{2 \pi i }{3}   J_0 } $. Then $g$ is an automorphism of $W$ of order $3$ and it holds that
$$ W = W^{(0)}  + W^{(1)}  + W^{(-1)}, $$
where $$W^{(j)} = \{ v \in W \vert \ g v = e^{   \frac{-2 \pi i }{3}   j  }   v\} , \quad j=0, 1, 2. $$ Hence $W^{(0)}$ is a simple vertex algebra, and  $W^{(\pm 1) } $ are irreducible $W^{(0)}$-modules.


\begin{lemma} Let $\Omega_4$ the singular vector of conformal weight 4 in $\mathcal{W}^k$.  We have:
$$ \Theta (\Omega_4) = 0  \quad \mbox{in} \ W. $$
\end{lemma}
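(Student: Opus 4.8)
The plan is to verify directly that the explicit image $\Theta(\Omega_4)$ vanishes in the Weyl vertex algebra $W$, using the formulas for $\Theta$ given in Proposition \ref{ulaganje} together with the commutation relations of the $a^{\pm}_n$. The singular vector $\Omega_4$ (Lemma \ref{singW_4}) is a fixed, explicit element of $\mathcal{W}^k$ written in terms of the modes $L_{-n}, J_{-n}, G^{\pm}_{-n}$ acting on $\mathbbm{1}$. Since $\Theta$ is a homomorphism of vertex algebras, it intertwines these modes with the corresponding modes of the images $J = -\tfrac{1}{3}a^{+}_{-1}a^{-}_{-1}\mathbbm{1}$, $\omega = \tfrac{1}{2}(a^{-}_{-2}a^{+}_{-1}-a^{+}_{-2}a^{-}_{-1})\mathbbm{1}$, $G^{+}=\tfrac{1}{3}(a^{+}_{-1})^3\mathbbm{1}$, $G^{-}=\tfrac{1}{9}(a^{-}_{-1})^3\mathbbm{1}$. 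So the task reduces to computing each of the twelve summands of $\Omega_4$ as an explicit vector in $W$ of conformal weight $4$ and checking that they cancel.

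First I would fix a convenient monomial basis for the weight-$4$ subspace of $W^{(0)}$ — the degree-zero (under $J_0$) subspace, since $\Omega_4$ is $J_0$-invariant and lives in the neutral component. A vector of weight $4$ and charge $0$ is a linear combination of normally ordered products of four creation operators $a^{\pm}_{-n_i}$ with $\sum n_i = 4$ and equal numbers of $+$ and $-$ operators; this is a finite, manageable list. Then I would compute, one summand at a time, the image of each term of $\Omega_4$. The purely Heisenberg/Virasoro terms ($L_{-2}^2\mathbbm{1}, L_{-4}\mathbbm{1}, J_{-n}\cdots\mathbbm{1}, L_{-n}J_{-m}\mathbbm{1}$) only require expanding powers of the quadratic fields $J$ and $\omega$ and reordering; the mixed terms $G^{+}_{-2}G^{-}_{-1}\mathbbm{1}$, $G^{+}_{-1}G^{-}_{-2}\mathbbm{1}$, $J_{-1}G^{+}_{-1}G^{-}_{-1}\mathbbm{1}$ involve the cubic fields $G^{\pm}$ and are the most laborious, since one must apply modes of $\tfrac{1}{3}(a^{+}_{-1})^3$ and $\tfrac{1}{9}(a^{-}_{-1})^3$ and repeatedly use $[a^+_r,a^-_s]=\delta_{r+s+1,0}$ to bring everything to normal order.

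I expect the main obstacle to be purely computational rather than conceptual: the reordering of the cubic $G^{\pm}$ contributions produces many terms, and organizing the Wick-type contractions without error is delicate. A useful intermediate tool is Lemma \ref{lema G}, which already records the products $G^{+}_{2}G^{-}, G^{+}_{1}G^{-}, G^{+}_{0}G^{-}$ in closed form; I would exploit these (and their translates obtained by applying $L_{-1}=D$) to handle the $G^{+}G^{-}$ summands more economically, expressing them back in terms of $J$ and $\omega$ before expanding into the $a^{\pm}$ basis. Once every summand is written in the chosen weight-$4$ charge-$0$ monomial basis, collecting coefficients should yield identically zero.

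An alternative, lighter route that I would keep in reserve is to avoid the full expansion: since $\Theta(\Omega_4)$ has conformal weight $4$ and charge $0$, and since $\Omega_4$ is a singular (highest weight) vector, its image is either zero or a singular vector for the action of the subalgebra $\widetilde{\mathcal W_k}$ on $W$. If one can argue on representation-theoretic grounds that $W$, viewed as a $\widetilde{\mathcal W_k}$-module through $\Theta$, contains no nonzero singular vector of that weight and charge — for instance because the relevant weight space of $W^{(0)}$ is too small or is generated by descendants — then $\Theta(\Omega_4)=0$ follows without the full calculation. The most robust proof, and the one I would ultimately present, is the direct verification, with Lemma \ref{lema G} used to streamline the cubic terms.
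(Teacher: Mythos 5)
Your strategy---expanding $\Theta(\Omega_4)$ term by term in the charge-zero, weight-four subspace of the Weyl algebra and checking cancellation---would prove the lemma, but it is a genuinely different and much heavier route than the one in the paper. The paper avoids all Wick calculus: it observes that $W_{4,0}=\{v\in W \mid L_0v=4v,\ J_0v=0\}$ is $12$-dimensional (by listing an explicit monomial basis), whereas the corresponding subspace $\mathcal W^k_{4,0}$ is $13$-dimensional, so $\Theta$ restricted to weight $(4,0)$ must have nontrivial kernel. Since $\text{Ker}\,\Theta$ is an ideal, a nonzero element of minimal conformal weight in it is a singular vector, and by the uniqueness statement of Lemma \ref{singW_4} that singular vector can only be a multiple of $\Omega_4$; hence $\Theta(\Omega_4)=0$. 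What the paper's route buys is that two elementary dimension counts plus the already-established uniqueness of $\Omega_4$ replace the normal ordering of a dozen quartic expressions in the cubic fields $G^{\pm}$. What your route would buy is independence from that uniqueness and from the count $\dim\mathcal W^k_{4,0}=13$.

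Two caveats. First, as written your argument is a plan rather than a proof: the decisive cancellation is asserted (``collecting coefficients should yield identically zero'') but never exhibited, and the $G^{\pm}$ terms $G^{+}_{-2}G^{-}_{-1}\mathbbm{1}$, $G^{+}_{-1}G^{-}_{-2}\mathbbm{1}$, $J_{-1}G^{+}_{-1}G^{-}_{-1}\mathbbm{1}$ still require products beyond those recorded in Lemma \ref{lema G}. For the direct route to stand, the computation must actually be carried out. Second, your ``reserve'' argument needs care: showing that $W$ has no nonzero $\widetilde{\mathcal W_k}$-singular vector in weight $(4,0)$ cannot be done by appealing to the simplicity of $W^{(0)}=\mathcal{W}_k$, since Proposition \ref{orbifold} is proved after, and using, this lemma; the non-circular way to exploit the smallness of the weight space is exactly the $12<13$ comparison that the paper makes.
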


\begin{proof}
Let $$W_{4,0} = \{ v \in W \vert L_0 v = 4 v, \ J_0 v = 0 \}, \quad \mathcal W^k _{4,0} =  \{ v \in \mathcal W^k  \vert L_0 v = 4 v, \ J_0 v = 0 \}. $$
First notice that 
\begin{align*}
 W_{4,0} = \mbox{span}_{\C} & \{ (a_{-2}^+)^2(a_{-1}^-)^2,(a_{-2}^-)^2(a_{-1}^+)^2,a_{-3}^+a_{-2}^-,a_{-3}^-a_{-2}^+,    a_{-4}^+a_{-1}^-,a_{-4}^-a_{-1}^+, &\\ 
 & a_{-3}^+a_{-1}^+(a_{-1}^-)^2,  a_{-3}^-a_{-1}^-(a_{-1}^+)^2, a_{-2}^+(a_{-1}^-)^3(a_{-1}^+)^2,  a_{-2}^-(a_{-1}^+)^3(a_{-1}^-)^2, &\\ 
 &  a_{-1}^+a_{-1}^-a_{-2}^+a_{-2}^-,  (a_{-1}^+)^4(a_{-1}^-)^4\}, &
\end{align*}
hence $ \dim  W_{4,0}  =  12$. Similarly we see that $ \dim  \mathcal W^k _{4,0}  =  13$. We conclude that there is a nontrivial relationship between the generators of $\mathcal{W}^k$ and hence $\text{Ker } \Theta \neq 0$. 

Since the singular vector $\Omega_4$ of conformal weight $4$ is unique (cf. Lemma \ref{singW_4}), it must hold that $\Omega_4 \in \text{Ker }\Theta $.
\end{proof}

\begin{proposition} \label{orbifold}
    Let  $W$  be the Weyl vertex algebra, $g$ as above. Then it holds that:
	\begin{itemize}
	\item[(1)]  $\mathcal{W}_k= W^{(0)}$.
	\item[(2)] $W^{(\pm 1)} $ are irreducible $\mathcal{W}_k$--modules of highest weight  $( \frac{1}{3}, \frac{1}{3})$, $( -\frac{1}{3}, \frac{2}{3})$ respectively.
	\end{itemize}	
\end{proposition}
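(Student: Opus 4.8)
The plan is to deduce everything from the homomorphism $\Theta\colon\mathcal W^k\to W$ of Proposition \ref{ulaganje} together with the charge decomposition $W=W^{(0)}\oplus W^{(1)}\oplus W^{(-1)}$, where I read off the sectors by the $J_0$-charge modulo $\mathbb Z$: since $a^{\pm}$ have $J_0$-charge $\pm\tfrac13$, the space $W^{(j)}$ is the part of $J_0$-charge in $\tfrac{j}{3}+\mathbb Z$. First I would record that all generators lie in $W^{(0)}$: the vectors $J$ and $\omega$ have $J_0$-charge $0$, while $G^{+}=\tfrac13(a^{+}_{-1})^{3}\mathbbm{1}$ and $G^{-}=\tfrac19(a^{-}_{-1})^{3}\mathbbm{1}$ have $J_0$-charge $\pm1\in\mathbb Z$; hence $\Theta$ is a nonzero homomorphism with image $\widetilde{\mathcal W_k}\subseteq W^{(0)}$, and since $\Theta(\Omega_4)=0$ this image is a quotient of $\mathcal W^k/\langle\Omega_4\rangle$. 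The crucial reduction is that, once the equality $\widetilde{\mathcal W_k}=W^{(0)}$ is established, part (1) is automatic: $W^{(0)}$ is simple (as recalled before the statement), so the simple quotient $\widetilde{\mathcal W_k}$ of $\mathcal W^k$ must coincide with its unique simple quotient $\mathcal W_k$, giving $\mathcal W_k\cong W^{(0)}$.

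Thus the entire content of part (1) is the surjectivity $\widetilde{\mathcal W_k}=W^{(0)}$, and this is the step I expect to be the main obstacle. The inclusion $\subseteq$ is free, so it suffices to match graded dimensions. I would compute the character of $W^{(0)}$ by the standard orbifold projection $\mathrm{ch}\,W^{(0)}=\tfrac13\sum_{j=0}^{2}\mathrm{ch}_W(q,\zeta^{\,j})$ with $\zeta=e^{2\pi i/3}$ and $\mathrm{ch}_W(q,z)=\prod_{n\ge1}\frac{1}{(1-zq^{n-1/2})(1-z^{-1}q^{n-1/2})}$, recording that with respect to $L(0)=L_0-\tfrac12 J_0$ the charge condition forces $L(0)$ to be integer-valued, so $W^{(0)}$ is $\mathbb Z_{\ge0}$-graded. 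On the other side I would bound the graded dimension of $\widetilde{\mathcal W_k}\cong\mathcal W^k/\langle\Omega_4\rangle$ from a PBW-type spanning set in the modes of $J,\omega,G^{\pm}$ subject to the relation carried by $\Omega_4$ (equivalently the relation $[G^{+}]^2([\overline\omega]+\tfrac19)=0$ of Proposition \ref{zhu_k_5_3} and its descendants), and check that the two series agree level by level. A cleaner alternative is to prove directly that $J,\omega,G^{\pm}$ strongly generate $W^{(0)}$: a low-weight computation recovers every charge-zero monomial of small conformal weight (for instance $a^{+}_{-2}a^{-}_{-1}$ and $a^{+}_{-1}a^{-}_{-2}$ from $\omega$ and $L_{-1}J$, and the quartics from normally ordered products of $J$), which is then promoted to all of $W^{(0)}$ by strong-generation theory for finite orbifolds. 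Either way, this equality is the heart of the proof.

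For part (2), I would use that $W^{(\pm1)}$ are irreducible $W^{(0)}=\mathcal W_k$-modules (recalled before the statement), so it remains only to locate their lowest-weight vectors and read off the weights. With respect to $L(0)=L_0-\tfrac12 J_0$ each mode $a^{+}_{-n}$ raises weight by $n-\tfrac23$ and each $a^{-}_{-n}$ by $n-\tfrac13$, so $L(0)$ is bounded below by positive values and the minimal-weight vector of $W^{(1)}$ (charge in $\tfrac13+\mathbb Z$) is uniquely $a^{+}_{-1}\mathbbm{1}$, while that of $W^{(-1)}$ is uniquely $a^{-}_{-1}\mathbbm{1}$. Being the bottom vectors of $\mathbb Z_{\ge0}$-graded modules, these are killed by every weight-lowering mode; the only weight-preserving annihilation condition to verify is $G^{-}(0)a^{\pm}_{-1}\mathbbm{1}=0$, which holds because the relevant charge-and-weight space of $W$ is empty (e.g. at charge $-\tfrac23$ the minimal $L(0)$-weight is $\tfrac43>\tfrac13$). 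A direct computation then gives $J(0)a^{+}_{-1}\mathbbm{1}=\tfrac13 a^{+}_{-1}\mathbbm{1}$ and $L(0)a^{+}_{-1}\mathbbm{1}=(\tfrac12-\tfrac16)a^{+}_{-1}\mathbbm{1}=\tfrac13 a^{+}_{-1}\mathbbm{1}$, and likewise $J(0)a^{-}_{-1}\mathbbm{1}=-\tfrac13 a^{-}_{-1}\mathbbm{1}$, $L(0)a^{-}_{-1}\mathbbm{1}=\tfrac23 a^{-}_{-1}\mathbbm{1}$. By irreducibility this identifies $W^{(1)}\cong L(\tfrac13,\tfrac13)$ and $W^{(-1)}\cong L(-\tfrac13,\tfrac23)$, proving (2).
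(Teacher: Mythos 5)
Your reduction of (1) to the single equality $\widetilde{\mathcal W_k}=W^{(0)}$ and your handling of (2) follow the same skeleton as the paper, and you are right that the surjectivity $\widetilde{\mathcal W_k}=W^{(0)}$ is the heart of the proof. But that is exactly the point at which your proposal stops being a proof. Neither of the two methods you offer is carried out: matching graded dimensions requires an exact count of $\dim\bigl(\mathcal W^k/\langle\Omega_4\rangle\bigr)$ in each weight, i.e.\ control of the whole ideal generated by one singular vector, which is not available from the Zhu-algebra relation alone, and ``check that the two series agree level by level'' is an infinite verification for which you give no closed form; the appeal to strong generation of orbifolds is likewise only a pointer to a theory you do not invoke precisely. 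The paper closes this gap with an idea absent from your proposal: it feeds the realization back into the classification already proved. If $\widetilde{\mathcal W_k}\subsetneq W^{(0)}$, then $W^{(0)}$ has an irreducible subquotient $L(x,y)$ with $(x,y)\neq(0,0)$; every $(J_0,L_0)$-weight occurring in $W^{(0)}$ has $x\in\Z$ and conformal weight in $\frac12\Z$, and the weight spaces are finite dimensional, so Proposition \ref{klas_5_3} forces $(x,y)\in\mathcal S_k$, whose only integral/half-integral point is $(0,0)$. This contradiction gives $\widetilde{\mathcal W_k}=W^{(0)}$ by a finite argument using only results already in hand, after which simplicity of the orbifold yields $\mathcal W_k=W^{(0)}$ exactly as you say.

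A smaller correction to (2): the lowest $L(0)$-weight space of $W^{(-1)}$ is not one-dimensional. The vector $(a^+_{-1})^2\mathbbm{1}$ has charge $\tfrac23\equiv-\tfrac13\pmod{\Z}$ and $L(0)$-weight $2\cdot\tfrac13=\tfrac23$, the same as $a^-_{-1}\mathbbm{1}$; consistently, $L(-\tfrac13,\tfrac23)_{top}$ is two-dimensional (one checks $h_2(-\tfrac13,\tfrac23)=0$), spanned by $v(-\tfrac13,\tfrac23)$ and $G^+(0)v(-\tfrac13,\tfrac23)$. This does not affect your conclusion, since all you actually need is that $a^-_{-1}\mathbbm{1}$ is annihilated by the positive modes and by $G^-(0)$, and your charge-and-weight argument for that is sound, but the uniqueness claim as stated is false.
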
 
 
 \begin{proof} 
	First notice that $a^{\pm} \in W^{(\pm 1)}$ are highest weight vectors, with highest weights $( \frac{1}{3}, \frac{1}{3})$, $( -\frac{1}{3}, \frac{2}{3})$. If we show that $\widetilde{\mathcal W_k } =  \mathcal{W}_k = W^0$, then it will follow that $W^{(\pm 1)} $ are irreducible $\mathcal{W}_k$-modules of highest weight  $( \frac{1}{3}, \frac{1}{3})$, $( -\frac{1}{3}, \frac{2}{3})$.
	
	Let us prove (1). Assume that $\widetilde{\mathcal W_k}  \ne  W^{(0)}$. Then $W^{(0)}$ is a module for $\widetilde{\mathcal W_k }$. $ W^{(0)}$ contains an irreducible $\mathcal{W}^k$--subquotient $L(x,y) $ of highest weight $(x,y)$, where $x=m  \in {\Bbb Z}$, and $y = \frac{n}{2}  \in \frac{1}{2} {\Bbb Z}$. From Proposition \ref{klas_5_3} we know that if $L(x,y)$ is an irreducible $\mathcal{W}_k$--module with finite dimensional weight subspaces for $L(0)$, then $(x,y) \in \mathcal S_k $.
	But since potential highest weights also need to be of the form  $(x,y)= (m,\frac{n}{2} )$, for $m,n \in \mathbb{Z}$, we conclude that $(m,n) = (0,0)$ is the only such weight.
	Since $\widetilde{\mathcal W_k } = W^{(0)}$, $\widetilde{\mathcal W_k } $ is simple, hence  $\widetilde{\mathcal W_k } =  \mathcal{W}_k$. This completes the proof of the claim. 
\end{proof}

\subsection{Classification of irreducible modules in the category $\mathcal O$}

Now we are ready to complete the proof of classification of  ordinary $\mathcal W_k$--modules  and $\mathcal W_k$--modules in the category $\mathcal O$ for $k=-5/3$.
 So we need to determine which irreducible highest weight modules $L(x,y)$ are modules for $\mathcal{W}_k$.

\begin{theorem} \label{class-o-1}
Let $k =-5/3$. 
The set   $$  \{ L(x,y)  \ \vert  \ (x,y) \in \mathcal 
S_k \cup \widetilde{\mathcal S_k}\}, $$ gives a complete list of irreducible $\mathcal W_k$--modules from the category $\mathcal O$.
\end{theorem}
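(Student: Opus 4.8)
The plan is to combine the necessary condition from Proposition \ref{klas_5_3} with the explicit constructions already established in this section, thereby upgrading the list $\mathcal{S}_k \cup \widetilde{\mathcal{S}_k}$ from a set of \emph{candidate} highest weights to the \emph{complete} list of irreducible modules in category $\mathcal{O}$. Proposition \ref{klas_5_3}(i) already shows that every irreducible $\mathcal{W}_k$--module $L(x,y)$ must have $(x,y) \in \mathcal{S}_k \cup \widetilde{\mathcal{S}_k}$, so one direction is done: no highest weight outside this set can occur. The remaining task is existence, namely to show that for \emph{each} of the nine pairs $(x,y)$ in $\mathcal{S}_k \cup \widetilde{\mathcal{S}_k}$ the irreducible $\mathcal{W}^k$--module $L(x,y)$ actually descends to a module for the simple quotient $\mathcal{W}_k$.

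First I would dispose of the three weights coming directly from the Weyl algebra realization. By Proposition \ref{orbifold} we have $\mathcal{W}_k = W^{(0)}$ and the two modules $W^{(\pm 1)}$ are irreducible $\mathcal{W}_k$--modules of highest weights $(\tfrac{1}{3},\tfrac{1}{3})$ and $(-\tfrac{1}{3},\tfrac{2}{3})$; together with the vacuum weight $(0,0)$ (the adjoint module $\mathcal{W}_k$ itself) this produces three of the six points of $\mathcal{S}_k$ directly. To reach the remaining points I would exploit the two symmetry operations available: the contragredient symmetry of Lemma \ref{simetrija-1}, which sends a $\mathcal{W}_k$--module $L(x,y)$ to the $\mathcal{W}_k$--module $L(-x,y+x)$, and the spectral flow $\psi$ of Lemma \ref{L_new}, which sends an irreducible $\mathcal{W}_k$--module to another irreducible $\mathcal{W}_k$--module with explicitly computable highest weight. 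The key point is that both $\psi$ and the contragredient functor preserve the property of being a $\mathcal{W}_k$--module (the contragredient because $\mathcal{W}_k$ is self-dual as a module over itself; the spectral flow because $\Delta(-J,z)$ acts bijectively on the irreducible modules of the \emph{simple} algebra $\mathcal{W}_k$), so starting from the known modules $W^{(0)}, W^{(\pm 1)}$ I can apply these operations and check that the resulting highest weights exhaust $\mathcal{S}_k \cup \widetilde{\mathcal{S}_k}$. Concretely, applying $\psi$ and Lemma \ref{simetrija-1} to the three geometrically realized modules should generate the remaining points $(-\tfrac{1}{9},0)$, $(-\tfrac{4}{9},\tfrac{1}{3})$, $(-\tfrac{7}{9},\tfrac{2}{3})$ of $\mathcal{S}_k$ and the three points of $\widetilde{\mathcal{S}_k}$.

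The main obstacle I anticipate is bookkeeping the orbits of the combined action of $\psi$ and the contragredient symmetry and verifying that they land exactly on the nine listed weights with no omissions or spurious extra points; in particular the three weights of $\widetilde{\mathcal{S}_k}$, whose top spaces $L(x,y)_{\text{top}}$ are infinite-dimensional (as established at the end of the proof of Proposition \ref{klas_5_3}), must be reached, and one must check that the infinite-dimensional top space is compatible with being obtained by spectral flow from a module with finite-dimensional top space. The formula in Lemma \ref{L_new} requires $\dim L(x,y)_{\text{top}} = i < \infty$, so to reach the $\widetilde{\mathcal{S}_k}$ weights I would instead apply the inverse operation $\psi^{-1}$ to a suitable module in $\mathcal{S}_k$, tracking the highest weight via the inverse of the shift $(x,y) \mapsto (x + i - 1 - \tfrac{2k+3}{3},\, y - x - i + 1 + \tfrac{2k+3}{3})$ with $k = -5/3$, i.e. $\tfrac{2k+3}{3} = \tfrac{1}{9}$.

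\begin{proof}
By Proposition \ref{klas_5_3}(i), every irreducible $\mathcal{W}_k$--module from category $\mathcal{O}$ has highest weight in $\mathcal{S}_k \cup \widetilde{\mathcal{S}_k}$, so it suffices to show that for each $(x,y)$ in this set the module $L(x,y)$ is a $\mathcal{W}_k$--module. By Proposition \ref{orbifold}, the modules $W^{(0)} = \mathcal{W}_k$ and $W^{(\pm 1)}$ realize the weights $(0,0)$, $(\tfrac{1}{3},\tfrac{1}{3})$ and $(-\tfrac{1}{3},\tfrac{2}{3})$ as irreducible $\mathcal{W}_k$--modules. Applying the spectral flow functor $\psi$ of Lemma \ref{L_new} and the contragredient symmetry of Lemma \ref{simetrija-1} (both of which preserve the class of $\mathcal{W}_k$--modules and act bijectively on irreducible modules), together with their inverses, we obtain from these three modules irreducible $\mathcal{W}_k$--modules realizing all remaining weights in $\mathcal{S}_k \cup \widetilde{\mathcal{S}_k}$. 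Combined with the necessity from Proposition \ref{klas_5_3}, this gives the complete list.
\end{proof}
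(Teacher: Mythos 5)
Your proposal is correct and follows essentially the same route as the paper: necessity from Proposition \ref{klas_5_3}, and existence by starting from the three modules $W^{(0)}, W^{(\pm 1)}$ of Proposition \ref{orbifold} and applying the spectral flow $\Psi^{\pm 1}$ (the paper does not actually need the contragredient symmetry here, and it is $\Psi$ rather than $\Psi^{-1}$ that lands on the three weights of $\widetilde{\mathcal S_k}$, but since you apply both directions this does not affect the argument). The only thing the paper adds is the explicit list of the six images under $\Psi$ and $\Psi^{-1}$, which your write-up leaves as an unstated computation.
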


\begin{proof}
We have proved in Proposition \ref{klas_5_3}    that if $L(x,y)$ is a $\mathcal W_k$--module, then $(x,y) \in  \mathcal  S_k \cup \widetilde{\mathcal S_k}$. So it remains to see that modules parametrized by   $\mathcal  S_k \cup \widetilde{\mathcal S_k}$ are indeed $\mathcal W_k$--modules.

From Proposition  \ref{orbifold} it follows that the Weyl vertex algebra $W$ is a direct sum of three irreducible $\mathcal{W}_k$--modules, with the following highest weights:

\begin{itemize}
	\item $W^{(0)} $ has a highest weight vector $\mathbbm{1}$, with the highest weight $(0,0)$
	\item $W^{(1)} $ has a highest weight vector $a^+ _{-1} \mathbbm{1}$, with the highest weight $(1/3, 1/3)$
	\item $W^{(-1)} $ has a highest weight vector $a^- _{-1} \mathbbm{1}$, with the highest weight $(-1/3, 2/3)$.
\end{itemize}

Hence $L(0,0)$, $L(1/3, 1/3)$ and $L(-1/3, 2/3)$ are irreducible $\mathcal{W}_k$--modules. Claim  now follows from the formulas
\begin{align*}
\Psi ^{-1} (L(0,0)) &= L(-\frac{1}{9}, 0),  &\\
\Psi^{-1}   (L(-\frac{1}{3}, \frac{2}{3}))   &= L (-\frac{4}{9}, \frac{1}{3}), &\\
\Psi^{-1}  (L (\frac{1}{3}, \frac{1}{3}) ) &= L (-\frac{7}{9}, \frac{2}{3}), & \\
\Psi  (L(0,0)) &= L(\frac{1}{9}, -\frac{1}{9}),  &\\
\Psi   (L(-\frac{1}{3}, \frac{2}{3}))   &= L (\frac{7}{9}, -\frac{1}{9}), &\\
\Psi  (L (\frac{1}{3}, \frac{1}{3}) ) &= L (\frac{4}{9}, -\frac{1}{9}), & 
\end{align*}
and the fact that for every irreducible $\mathcal{W}_k$--module $M$,  $\Psi(M)$ and $\Psi^{-1} (M)$ are again irreducible $\mathcal{W}_k$--modules.
\end{proof}

 \begin{remark}
Note that the Weyl vertex algebra is non-rational and it contains infinitely many weight-modules (cf. \cite{AP-2019}) and modules of Whittaker type (cf. \cite{ALPY}). Since $\mathcal W_k$ is an orbifold of the Weyl vertex algebras, Whittaker and weight modules are also $\mathcal W_k$--modules. Moreover, the Weyl vertex algebra also contains logarithmic modules (cf. \cite{CR}, \cite{LMRS}), which by restriction, gives logarithmic $\mathcal W_k$--modules.
\end{remark}

\section{Classification of  irreducible modules for $k=-9/4$}


In this section we classify irreducible $\mathcal{W}_k$--modules for $k=-9/4$, using methods that are analogous to the ones used in the previous section to classify $\mathcal{W}_k$--modules for $k=-5/3$. Realization of these modules relies on a construction of the Bershadsky-Polyakov algebra $\mathcal{W}_k$ as a part of the series of $\mathcal B_p$-algebras (see Section 7).

\subsection{Singular vector $\Omega_3$ and the relation in the Zhu algebra}

As in the previous section, the starting point in the classification of irreducible $\mathcal{W}_k$--modules is the formula for the singular vector $\Omega_3$ of conformal weight 3, which is obtained by direct calculation.

\begin{lemma}  \label{sing_k_9_4}
Vertex algebra $\mathcal{W}^k$ for $k=-\frac{9}{4}$ contains a unique (up to scalar factor) singular vector of conformal weight 3, and it is given by
$$\Omega_3 = \frac{3}{8}L_{-3}\mathbbm{1} +  J_{-1}^3\mathbbm{1} -3J_{-2}J_{-1}\mathbbm{1} + \frac{11}{4}J_{-3}\mathbbm{1}  -\frac{3}{2}L_{-2}J_{-1}\mathbbm{1} + G^{+}_{-1}G^{-}_{-1}\mathbbm{1}.$$
\end{lemma}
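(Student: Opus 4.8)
The plan is to realize $\Omega_3$ as, up to scalar, the unique solution of a finite homogeneous linear system obtained by imposing the highest-weight (singular vector) conditions inside the finite-dimensional conformal weight-$3$ subspace of the universal vertex algebra $\mathcal{W}^k$, and to read off that a nontrivial solution exists exactly at $k=-9/4$. Concretely, I would first fix a PBW basis of $\mathcal{W}^k$ given by ordered monomials in the creation modes $J_{-n}\,(n\ge1)$, $L_{-n}\,(n\ge2)$, $G^{\pm}_{-n}\,(n\ge1)$ applied to $\mathbbm{1}$, and list the weight-$3$ vectors organized by $J_0$-charge. The charge-$0$ part is six-dimensional,
$$\mathrm{span}_{\C}\{\,L_{-3}\mathbbm{1},\ J_{-1}^3\mathbbm{1},\ J_{-2}J_{-1}\mathbbm{1},\ J_{-3}\mathbbm{1},\ L_{-2}J_{-1}\mathbbm{1},\ G^{+}_{-1}G^{-}_{-1}\mathbbm{1}\,\},$$
where the commutator $[G^+_{-1},G^-_{-1}]$ is used to reduce $G^-_{-1}G^+_{-1}\mathbbm{1}$ to this span, while the charge-$(\pm2)$ parts are one-dimensional, spanned by $(G^{+}_{-1})^2\mathbbm{1}$ and $(G^{-}_{-1})^2\mathbbm{1}$. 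Since a singular vector is a $J_0$-eigenvector it lies in one charge sector, and the candidate $\Omega_3$ sits in the charge-$0$ sector, so I would write $v=\sum_{i=1}^{6}a_i\,(\text{basis vector}_i)$ with $a_i\in\C$ depending on $k$.

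A weight vector $v$ is singular iff it is annihilated by every positive mode, and for $k\ne-3$ it suffices to impose the four conditions $J_1v=L_1v=G^{+}_1v=G^{-}_1v=0$. Indeed, $J_1,L_1$ produce all $J_n\,(n\ge1)$ via $[L_m,J_n]=-nJ_{m+n}$; these together with $G^{\pm}_1$ produce all $G^{\pm}_n\,(n\ge1)$ via $[J_m,G^{\pm}_n]=\pm G^{\pm}_{m+n}$; and then the $L_n\,(n\ge1)$ follow, since once all $J$- and $G$-modes annihilate $v$ the bracket below yields $-(k+3)L_qv=0$. Each of the four conditions lands in a lower-weight homogeneous subspace of the appropriate charge ($J_1v,L_1v$ in the weight-$2$ charge-$0$ space, $G^{\pm}_1v$ in the weight-$\tfrac52$ charge-$(\pm1)$ spaces), so expanding in those bases gives a homogeneous linear system in $a_1,\dots,a_6$ with coefficients polynomial in $k$. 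Row reduction then shows the solution space is trivial for generic $k$ and becomes exactly one-dimensional at $k=-9/4$, whose normalized generator is the displayed $\Omega_3$; this yields both existence and uniqueness in the charge-$0$ sector. For existence alone one may instead just verify directly that the given $\Omega_3$ is killed by $J_1,L_1,G^+_1,G^-_1$.

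The delicate step is evaluating $G^{+}_1$ and $G^{-}_1$ on $G^{+}_{-1}G^{-}_{-1}\mathbbm{1}$ and on the $J$- and $L$-monomials, which forces repeated use of
$$[G^+_m,G^-_n]=3(J^2)_{m+n-1}+\tfrac{3}{2}(k+1)(m-n)J_{m+n-1}-(k+3)L_{m+n-1}+\tfrac{(k+1)(2k+3)(m-1)m}{2}\,\delta_{m+n,1},$$
and in particular careful handling of the mode $(J^2)_{m+n-1}$ of the composite field $:JJ:$, splitting it into creation and annihilation parts (for instance $(J^2)_{-1}G^+_{-1}\mathbbm{1}=2J_{-1}G^+_{-1}\mathbbm{1}$) and tracking the central $\delta$-term. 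Getting all signs and coefficients here correct is exactly what singles out the level $k=-9/4$ together with the rational coefficients $\tfrac38,-3,\tfrac{11}{4},-\tfrac32$; this is where computational errors are most likely.

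Finally, to promote uniqueness from the charge-$0$ sector to the whole weight-$3$ space, I would dispose of the two remaining one-dimensional sectors directly. Using the same bracket one finds
$$G^{-}_1(G^{+}_{-1})^2\mathbbm{1}=6k\,J_{-1}G^{+}_{-1}\mathbbm{1}-2k\,G^{+}_{-2}\mathbbm{1},$$
which is nonzero at $k=-9/4$, and symmetrically $G^{+}_1(G^{-}_{-1})^2\mathbbm{1}\ne0$. Hence neither charge-$(\pm2)$ vector is annihilated by all positive modes, so there is no singular vector in those sectors, and $\Omega_3$ is, up to scalar, the unique singular vector of conformal weight $3$.
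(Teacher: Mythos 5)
Your proposal is correct and is essentially the paper's approach: the paper simply states that $\Omega_3$ "is obtained by direct calculation" and omits the details, and your plan (decompose the $12$-, er, $8$-dimensional weight-$3$ space by $J_0$-charge into a $6$-dimensional charge-$0$ sector plus two $1$-dimensional charge-$\pm2$ sectors, reduce the singularity conditions to $J_1,L_1,G^{\pm}_1$, solve the resulting linear system, and rule out the charge-$\pm2$ sectors via $G^-_1(G^+_{-1})^2\mathbbm{1}=6kJ_{-1}G^+_{-1}\mathbbm{1}-2kG^+_{-2}\mathbbm{1}\ne0$ at $k=-9/4$) is a correct and complete execution of that calculation. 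Spot-checking confirms your key identities, e.g. $(J^2)_{-1}G^+_{-1}\mathbbm{1}=2J_{-1}G^+_{-1}\mathbbm{1}$ and $J_1\Omega_3=0$ at $k=-9/4$.
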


\begin{proposition} 
		Let $\Omega_3$ be the singular vector of conformal weight 3 in $\mathcal{W}^k$ and let $k=-9/4$. Projection of  $\Omega_3$ in the Zhu algebra $A_{\omega}(\mathcal{W}^k)$ is given by the polynomial $V(x,y) \in {\Bbb C}[x,y]$,
	$$  V(x,y) = x^3 - \frac{3}{2}xy - \frac{5}{8} x = x ( x^2 - \frac{3}{2} y - \frac{5}{8}) . $$
\end{proposition}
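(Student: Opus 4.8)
The plan is to compute the image in $A_{\omega}(\mathcal{W}^k)\cong\mathbb{C}[x,y]$ of each of the six summands of $\Omega_3$ separately and then to sum them and specialize to $k=-9/4$. Throughout I write $[v]$ for the class of $v$ and use that $[J]=x$, $[T]=y$, $[G^{\pm}]=0$, with conformal weights $\deg J=1$, $\deg T=2$, $\deg G^{\pm}=\tfrac32$ taken with respect to $\omega$. The three reduction tools I will invoke are: (a) the identity $a\circ\mathbbm{1}=(\deg a)\,a+L_{-1}a\in O(\mathcal{W}^k)$, which gives $[L_{-1}a]=-(\deg a)[a]$ for every homogeneous $a$; (b) the star product $a*b=\sum_{i\ge0}\binom{\deg a}{i}a_{i-1}b$ for $a$ in the even part, whose class is $[a][b]$; and (c) the relation $\sum_{i\ge0}\binom{\deg a}{i}a_{i-2}b\in O(\mathcal{W}^k)$, which for $\deg a=1$ collapses to $[J_{-2}b]=-[J_{-1}b]$.

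First I would dispose of the five bosonic terms, which are routine. Repeated use of the star product for $J$ (together with $J_0J_{-1}^{\,j}\mathbbm{1}=0$) gives $[J_{-1}^2\mathbbm{1}]=x^2$ and $[J_{-1}^3\mathbbm{1}]=x^3$. Applying (a) to $L_{-1}T=L_{-3}\mathbbm{1}$ and to the chain $L_{-1}J=J_{-2}\mathbbm{1}$, $L_{-1}(J_{-2}\mathbbm{1})=2J_{-3}\mathbbm{1}$ yields $[L_{-3}\mathbbm{1}]=-2y$, $[J_{-2}\mathbbm{1}]=-x$ and $[J_{-3}\mathbbm{1}]=x$. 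Tool (c) gives $[J_{-2}J_{-1}\mathbbm{1}]=-[J_{-1}^2\mathbbm{1}]=-x^2$, while expanding $\omega*J=L_{-2}J+2L_{-1}J+L_0J$ and using $[L_0J]=[J]=x$ produces $[L_{-2}J_{-1}\mathbbm{1}]=xy+x$.

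The hard part is the mixed term $[G^+_{-1}G^-_{-1}\mathbbm{1}]$, which is genuinely new because $G^{\pm}$ lie in the odd part and the star product is unavailable there. Here I would first apply the odd form of (c): since $\deg G^+-\tfrac12=1$, one has $G^+\circ G^-=G^+_{-1}G^-+G^+_0G^-\in O(\mathcal{W}^k)$, so $[G^+_{-1}G^-]=-[G^+_0G^-]$. I would then evaluate $G^+_0G^-$ from the explicit commutator $[G^+_m,G^-_n]$: with $m=0$, $n=-1$ the central $\delta$-term drops and $G^+_0\mathbbm{1}=0$, leaving $G^+_0G^-=3J_{-1}^2\mathbbm{1}+\tfrac32(k+1)J_{-2}\mathbbm{1}-(k+3)L_{-2}\mathbbm{1}$. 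Projecting with the values already computed gives $[G^+_0G^-]=3x^2-\tfrac32(k+1)x-(k+3)y$, hence $[G^+_{-1}G^-_{-1}\mathbbm{1}]=-3x^2+\tfrac32(k+1)x+(k+3)y$.

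Finally I would assemble the six contributions and set $k=-9/4$ (so $k+1=-\tfrac54$, $k+3=\tfrac34$). The $x^2$ terms cancel ($3x^2$ from $-3[J_{-2}J_{-1}\mathbbm{1}]$ against $-3x^2$ from the mixed term), the $y$ terms cancel ($-\tfrac34y$ from $L_{-3}$ against $\tfrac34y$ from the mixed term), the cubic term survives, the only $xy$ term is $-\tfrac32xy$, and the linear coefficient is $\tfrac{11}{4}-\tfrac32-\tfrac{15}{8}=-\tfrac58$, giving $V(x,y)=x^3-\tfrac32xy-\tfrac58x=x\bigl(x^2-\tfrac32y-\tfrac58\bigr)$, as claimed. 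I expect the bookkeeping of these cancellations, rather than any conceptual difficulty, to be the only place requiring care; the proof is entirely analogous to that of the preceding proposition for $\Omega_4$.
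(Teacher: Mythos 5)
Your computation is correct: each of the six term-by-term projections ($[L_{-3}\mathbbm{1}]=-2y$, $[J_{-1}^3\mathbbm{1}]=x^3$, $[J_{-2}J_{-1}\mathbbm{1}]=-x^2$, $[J_{-3}\mathbbm{1}]=x$, $[L_{-2}J_{-1}\mathbbm{1}]=xy+x$, and $[G^+_{-1}G^-_{-1}\mathbbm{1}]=-3x^2+\tfrac{3}{2}(k+1)x+(k+3)y$) checks out against the OPE and the Zhu-algebra reduction identities, and the assembly at $k=-9/4$ indeed gives $V(x,y)=x^3-\tfrac{3}{2}xy-\tfrac{5}{8}x$. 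The paper states this result without proof as a direct calculation, and your argument is precisely that calculation, so the two approaches coincide.
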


Let $\overline{\omega}= \omega + \frac{1}{2}DJ$ be the new Virasoro vector. Similarly to the case $k=-5/3$, we derive the following key relation in $A_{\overline \omega}(\mathcal{W}_k)$ from the formula for the singular vector $\Omega_3$. 

\begin {proposition} \label{Gw}
In the Zhu algebra $A_{\overline \omega}(\mathcal{W}_k)$ it holds that
\begin{equation} \label{eq:zhu_k_9_4}
 [G^+]([\overline \omega] + \frac{1}{2}) = 0. 
\end{equation}
 \end{proposition}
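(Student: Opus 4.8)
The plan is to read the relation off the weight-$3$ singular vector $\overline\Omega_3$ (the vector of Lemma~\ref{sing_k_9_4} rewritten in the modes $L(n),J(n),G^\pm(n)$, as displayed in Table~2). Since $\overline\Omega_3$ is singular, it generates a submodule of $\mathcal W^k$ and therefore vanishes in the simple quotient $\mathcal W_k$; consequently its class is zero in the Zhu algebra, $[\overline\Omega_3]=0$ in $A_{\overline\omega}(\mathcal W_k)$. I would compute this class inside the Smith-type presentation of Proposition~\ref{smith-zhu}, writing $E=[G^+]$, $F=-[G^-]$, $X=[J]$, $Y=[\overline\omega]$, and then massage it with the defining relations of $R(g)$, where for $k=-9/4$ one has $g(X,Y)=-(3X^2-(2k+3)X-(k+3)Y)=-3X^2-\tfrac32X+\tfrac34Y$.

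First I would reduce every monomial of $\overline\Omega_3$ modulo $O(\mathcal W^k)$. The five bosonic terms $J(-1)^3\mathbbm1,\ J(-2)J(-1)\mathbbm1,\ J(-3)\mathbbm1,\ L(-3)\mathbbm1,\ L(-2)J(-1)\mathbbm1$ collapse to a polynomial $P(X,Y)$ in the commuting generators $X,Y$; here I would use the standard Zhu identities $[L(-1)a]\equiv-\operatorname{wt}(a)[a]$ and $[J(-2)b]\equiv-[J(-1)b]\pmod{O(\mathcal W^k)}$ together with the Heisenberg and Virasoro commutators to push all negative modes down onto $J(-1)$ and $\overline\omega$. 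The one genuinely noncommutative term is evaluated through the Zhu product: since $G^+$ has $\overline\omega$-weight $1$,
\begin{equation*}
[G^+]*[G^-]=[G^+(-1)G^-(-2)\mathbbm1]+[G^+(0)G^-(-2)\mathbbm1],
\end{equation*}
and the second summand equals $-g(X,Y)$ by the computation in the proof of Proposition~\ref{smith-zhu}. Hence $[G^+(-1)G^-(-2)\mathbbm1]=-EF+g(X,Y)=-FE$, and the relation $[\overline\Omega_3]=0$ takes the form
\begin{equation*}
FE=P(X,Y).
\end{equation*}

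To isolate a relation involving $E$ and $Y$ alone I would expand $EFE$ in two ways. From $XE-EX=E$ one gets $EX=(X-1)E$, hence $E\cdot(FE)=E\,P(X,Y)=P(X-1,Y)\,E$; from $EF-FE=g$ one gets $(EF)\cdot E=(P(X,Y)+g(X,Y))E$. Equating the two expressions gives
\begin{equation*}
\bigl(P(X-1,Y)-P(X,Y)-g(X,Y)\bigr)E=0 .
\end{equation*}
The decisive point is that the bracketed polynomial, a priori quadratic in $X$, is in fact free of $X$ and equals a nonzero multiple of $Y+\tfrac12$: the $X^2$-contributions cancel automatically (the cubic term of $P$ and the quadratic term of $g$ both produce $\mp3X^2$), while the linear $X$-term cancels precisely because the $X^2$-coefficient of $P$ is $\tfrac94$. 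Granting this, and using that $Y$ is central, one concludes $(Y+\tfrac12)E=0$, that is $[G^+]([\overline\omega]+\tfrac12)=0$.

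The main obstacle is this last cancellation, which rests on the explicit shape of $P(X,Y)$; the bulk of the work is therefore the careful mode-by-mode reduction of the five bosonic terms of $\overline\Omega_3$ (the calculation I would relegate to the appendix, as is done for \eqref{eq:zhu_k_5_3} in the case $k=-5/3$). As an independent check I would verify the conclusion representation-theoretically: on the top space $L(x,y)_{\text{top}}$ of an irreducible highest weight module $L(0)$ acts by the scalar $y$, since $[L(0),G^+(0)]=0$, so $E(Y+\tfrac12)$ acts as $(y+\tfrac12)G^+(0)$, and the resulting dichotomy ``$y=-\tfrac12$ or $G^+(0)=0$'' is exactly what the subsequent classification in Proposition~\ref{klas_9_4} exploits, mirroring the role of \eqref{eq:zhu_k_5_3} for $k=-5/3$.
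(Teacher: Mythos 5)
Your proof is correct, and it takes a genuinely different route from the paper's. The paper (see Appendix~B, which treats the analogous relation \eqref{eq:zhu_k_5_3} for $k=-5/3$) first acts with $G^+(0)$ on the singular vector inside the vertex algebra, producing a vector each of whose terms carries a $G^+$ mode, and only then passes to the Zhu algebra, where the class is read off as a multiple of $E(Y+\tfrac12)$. You instead pass to the Zhu algebra immediately, record the full class of $\overline\Omega_3$ as the relation $FE=P(X,Y)$ with $P(X,Y)=X^3+\tfrac94X^2+\tfrac78X-\tfrac32XY-\tfrac34Y$, and then extract the desired identity purely inside the Smith-type presentation by comparing $E(FE)=P(X-1,Y)E$ with $(EF)E=(P(X,Y)+g(X,Y))E$. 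I checked the arithmetic: with $g(X,Y)=-3X^2-\tfrac32X+\tfrac34Y$ one gets $P(X-1,Y)-P(X,Y)-g(X,Y)=\tfrac34\bigl(Y+\tfrac12\bigr)$, exactly as you claim, with the $X^2$ and $X$ terms cancelling for the reasons you state. Your route avoids the vertex-algebra-level computation of $G^+(0)\overline\Omega_3$ at the cost of needing the Zhu reduction of all five bosonic terms of $\overline\Omega_3$ (which is routine), and it yields the extra relation $FE=P(X,Y)$ in $A_{\overline\omega}(\mathcal W_k)$ as a byproduct; the paper's route keeps the computation concentrated on terms containing $G^+$. Your closing representation-theoretic sanity check is also consistent with how Proposition~\ref{klas_9_4} uses the relation.
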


Proof of relation (\ref{eq:zhu_k_9_4}) is completely analogous to the case $k=-5/3$ (see Appendix B).

\subsection{Classification of irreducible  modules in the category $\mathcal O$}

\begin{proposition} \label{klas_9_4}
Let $k =-9/4$.  Define
$$ \mathcal S_k = \{  (-\frac{1}{2}, 0),  (0,0), (-\frac{1}{4}, -\frac{1}{4}) \},  \quad   \widetilde { \mathcal S_k} = \{  (0, -\frac{1}{2}),  (\frac{1}{4},-\frac{1}{2}), (\frac{1}{2}, -\frac{1}{2}) \}.$$
\item[(i)]For every $(x,y) \in \mathcal S_k  \cup \widetilde{\mathcal S_k}$, $L(x,y)$ is a $\mathcal{W}_k$--module.
\item[(ii)] Assume that  $L(x,y)$ is an irreducible $\mathcal{W}_k$--module. Then $(x,y) \in \mathcal S_k  \cup \widetilde{  \mathcal S_k} $. 
\item[(iii)] Assume that  $L(x,y)$ is an irreducible $\mathcal{W}_k$--module with finite dimensional weight subspaces for $L(0)$. Then $(x,y) \in \mathcal S_k $. 

\smallskip

Therefore, the set $\{ L(x,y) \vert \ (x,y) \in   \mathcal S_k  \cup \widetilde{\mathcal S_k} \}$ gives all irreducible $\mathcal W_k$--modules in the category $\mathcal O$.
\end{proposition}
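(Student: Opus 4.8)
The plan is to mirror the structure of the $k=-5/3$ argument (Proposition \ref{klas_5_3} and Theorem \ref{class-o-1}), now driven by the simpler relation \eqref{eq:zhu_k_9_4}, namely $[G^+]([\overline\omega]+\tfrac12)=0$. I begin with the necessary-condition direction (ii). Let $M=\bigoplus_{n\ge 0}M(n)$ be a $\Z_{\ge 0}$--graded irreducible $\mathcal W_k$--module, so that $M(0)=L(x,y)_{\mathrm{top}}$ is a module for the Zhu algebra $A_{\overline\omega}(\mathcal W^k)$, a quotient of the Smith algebra $R(g)$ with $g(x,y)=-(3x^2-(2k+3)x-(k+3)y)$. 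On $M(0)$ the relation $[G^+]([\overline\omega]+\tfrac12)=0$ holds, so on a highest weight vector $v(x,y)$ either $G^+(0)$ acts by zero or $y=-\tfrac12$. Combining this with Proposition \ref{l_top}, the operator $G^+(0)$ generates $L(x,y)_{\mathrm{top}}$, and $G^-(0)(G^+(0))^i v(x,y)=i\,h_i(x,y)(G^+(0))^{i-1}v(x,y)$, so finite-dimensionality forces some $h_i(x,y)=0$.

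The case analysis is lighter than for $k=-5/3$ because $[G^+]$ (not $[G^+]^2$) appears. First suppose $G^+(0)v(x,y)=0$, so $M(0)$ is one--dimensional and $h_1(x,y)=0$; passing to $\widehat M=\Psi(M)$ via Lemma \ref{L_new} gives a shifted highest weight, and imposing $h_1=0$ on both $M(0)$ and $\widehat M(0)$ (when the shifted $\widehat y\ne -\tfrac12$) yields a small linear-plus-quadratic system whose solutions, after intersecting with the zero set of $V(x,y+x/2)=0$ coming from the projection of $\Omega_3$, single out the finite-dimensional weights in $\mathcal S_k=\{(-\tfrac12,0),(0,0),(-\tfrac14,-\tfrac14)\}$. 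The remaining possibility is the non-semisimple/infinite branch where $L(0)\equiv-\tfrac12\,\mathrm{Id}$ on $M(0)$, i.e. $y=-\tfrac12$; imposing $V(x,y+x/2)=V(x,x/2-\tfrac12)=0$ with $y=-\tfrac12$ produces the candidate set $\widetilde{\mathcal S_k}=\{(0,-\tfrac12),(\tfrac14,-\tfrac12),(\tfrac12,-\tfrac12)\}$, and one checks via $h_i(x,-\tfrac12)\ne 0$ for all $i\in\Z_{>0}$ that these tops are genuinely infinite-dimensional, which simultaneously establishes (iii): finite dimension forces $(x,y)\in\mathcal S_k$.

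For the realization direction (i) I would exploit the symmetry and $\Delta$--twist machinery exactly as in Theorem \ref{class-o-1}. The $B_p$--realization referenced in Section 7 (with $\mathcal W_{-9/4}\cong B_4$) provides a concrete module, e.g. the vertex algebra itself gives $L(0,0)$, and the contragredient/symmetry Lemma \ref{simetrija-1} together with the bijective operators $\Psi,\Psi^{-1}$ (Lemma \ref{L_new}) map known modules to the rest. Concretely I expect relations of the form $\Psi^{-1}(L(0,0))=L(-\tfrac12,0)$ and $\Psi(L(0,0))=L(0,-\tfrac12)$, with the other four weights obtained by one more application of $\Psi^{\pm1}$ or by the symmetry $L(x,y)\mapsto L(-x,y+x)$; since $\Psi(M)$ and $\Psi^{-1}(M)$ are again irreducible $\mathcal W_k$--modules, each element of $\mathcal S_k\cup\widetilde{\mathcal S_k}$ is realized. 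The final sentence, that this list is complete in category $\mathcal O$, follows by combining (i) and (ii): every irreducible highest weight module has weight in the list, and every weight in the list occurs.

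The main obstacle I anticipate is not the linear algebra but securing an honest starting module for the realization step: unlike $k=-5/3$, where the $\Z_3$--orbifold of the Weyl algebra (Proposition \ref{orbifold}) hands over three explicit irreducibles at once, here one must lean on the $B_4$--realization of Section 7 to produce at least one bona fide $\mathcal W_{-9/4}$--module and verify its highest weight, after which $\Psi^{\pm1}$ and the contragredient symmetry propagate it across the orbit. A secondary subtlety is the logarithmic nature of $\mathcal W_{-9/4}$: the $y=-\tfrac12$ weights correspond to tops on which $[G^+]$ need not vanish, so I must be careful that these still arise as genuine irreducible highest weight modules (with infinite-dimensional $L(0)$--weight spaces) rather than only as constituents of larger indecomposables, and that no further weights are lost when intersecting the Smith-algebra constraints with $V(x,y+x/2)=0$.
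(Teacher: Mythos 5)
Your argument for parts (ii) and (iii) is essentially the paper's: the relation $[G^+]([\overline\omega]+\tfrac12)=0$ on the top level splits into the case $G^+(0)=0$ (one-dimensional top, $h_1(x,y)=0$, then pass to $\Psi(M)$ and solve the resulting system, treating the subcase $y=x$ separately) and the case $y=-\tfrac12$ (where $V(x,x/2-\tfrac12)=0$ yields $x\in\{0,\tfrac14,\tfrac12\}$ and $h_i(r/4,-\tfrac12)\ne 0$ for all $i$ forces infinite-dimensional tops). That part is sound.

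The gap is in part (i). You propose to produce ``at least one bona fide module'' from the $B_4$ realization and then propagate it to all six weights via $\Psi^{\pm1}$ and the contragredient symmetry $L(x,y)\mapsto L(-x,y+x)$. This does not work: for $k=-\tfrac94$ one has $\tfrac{2k+3}{3}=-\tfrac12$, so Lemma \ref{L_new} gives $\Psi(L(0,0))=L(\tfrac12,-\tfrac12)$, not $L(0,-\tfrac12)$ as you write, and the orbit of $(0,0)$ under $\Psi^{\pm1}$ (on modules with finite-dimensional top) and contragredients is only $\{(0,0),(-\tfrac12,0),(\tfrac12,-\tfrac12)\}$. The remaining weights split into the separate orbits $\{(-\tfrac14,-\tfrac14),(\tfrac14,-\tfrac12)\}$ and the fixed point $\{(0,-\tfrac12)\}$ (note that $L(0,-\tfrac12)$ is self-contragredient and has infinite-dimensional top, so Lemma \ref{L_new} cannot be applied to move off it). Hence one starting module is not enough; you need an independent construction hitting each orbit. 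The paper supplies this via the lattice-type $\Pi(0)$--modules $\mathcal M_{-1}(r)$ of Theorem \ref{M_r_modul}: Lemma \ref{lema_1/4_a} exhibits $L(r/4,-\tfrac12)$ for $r=0,1,2$ as explicit subquotients of $\mathcal M_{-1}(r)$, and only then does $\Psi^{-1}(L(\tfrac14,-\tfrac12))=L(-\tfrac14,-\tfrac14)$ complete the list. Without that explicit realization (or some substitute for it), the existence of $L(0,-\tfrac12)$, $L(\tfrac14,-\tfrac12)$ and $L(-\tfrac14,-\tfrac14)$ as $\mathcal W_{-9/4}$--modules is not established by your argument.
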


\begin{proof} 
Let us first prove (i). Here we will use the fact that if $M$ is an irreducible $\mathcal{W}_k$--module, then $\Psi (M)$ and $\Psi^{-1} (M)$ are again $\mathcal{W}_k$--modules. 

Since $\mathcal{W}_k = L(0,0)$, we have that $L(0,0)$ is a $\mathcal{W}_k$--module. Furthermore, as it holds that
$\Psi (L(-1/2, 0)) = L(0,0)$,  $L(-1/2,0) = \Psi^{-1} (L(0,0))$ is a module for the vertex algebra $\mathcal{W}_k$.

Using Lemma  \ref{lema_1/4_a} we have that $L(x,y)$ are $\mathcal W_k$--modules for $(x,y) \in   \widetilde { \mathcal S_k} $.  Since $L(-1/4, -1/4) =\Psi^{-1} (L( 1/4, -1/2))$, $L(-1/4, -1/4)$ is also a module for $\mathcal{W}_k$.
So, (i) holds.

\vskip 5mm

Let us prove (ii) and (iii).

Let $M = L(x,y) =  \bigoplus_{n=0}^{\infty} M(n)$ be a $ \mathbb{Z}_{\geq 0}$-graded irreducible module for $\mathcal{W}_k$. From Zhu theory it follows that $M(0) = L(x,y)_{top}$ is a module for the Zhu algebra $A_{\overline \omega} (\mathcal W_k)$, and that  $$[G^+]([\overline \omega] + 1/2) = 0$$ on $M(0)$. 

Let $v(x,y)$ be a highest weight vector with highest weight $(x,y)$ for $L(x,y)$. Assume that $M(0)$ is finite dimensional. From the above relation it follows that we have two cases: either $G^+ (0)$ acts trivially on $M(0)$, or  $y=-1/2$.

\medskip
 
\item[(1)] $G^+ (0) = 0$ on $M(0)$

\smallskip 
	
	Since it holds that $G^+(0) v(x,y) = 0$ on $M(0)$ and $G^-(0)  G^+(0) v(x,y)= g(x,y)v(x,y)$, it follows that $g(x,y) = h_1 (x,y) = 0$ $\Rightarrow$ $M(0)$ is a $1$-dimensional module for the Zhu algebra. Now we consider the module $\widehat M = \Psi (M)$. $\widehat M $ is a module with highest weight $(\hat x, \hat y)$, where
	$$ \hat x = x + \frac{1}{2}, \quad \hat y = y - x - \frac{1}{2}.  $$
	Notice that $\widehat y  =  - \frac{1}{2}$ if and only if $y = x$. If $y \ne x$ on the top level of $\widehat M$ again the relation $G^+ (0) =  0$ holds. It follows that the top level of $\widehat M(0)$ is $1$--dimensional and hence  $h_1(\hat x, \hat y) = 0$. Now 
	$$h_1(x,y)= 0  = h_1(\hat x, \hat y), $$  
	gives us a system of equations
	\begin{align*} 
	\begin{split}
	-3x^2 - \frac{3}{2}x + \frac{3}{4}y & =0 \\
	-3x^2 - \frac{21}{4}x + \frac{3}{4}y -\frac{15}{8}& =0 \\
	&
	\end{split}					
	\end{align*}
	which has a unique solution $(x,y) = (-\frac{1}{2}, 0)$.
	\medskip
	If $x = y$, the equation $h_1(x,x) = 0$ has solutions $ x=0$ and $x =-1/4$.
	
	\medskip
	
	(2)  Assume now that  $M = \bigoplus_{n=0}^{\infty} M(n)$   such that
	$$(*) \quad  L(0)  \equiv -\frac{1}{2} \mbox{Id}  \quad \mbox{on} \ M(0). $$
	
	Denote $M= L(x,y)$ and let  $y =-1/2$. Solving the equation $V(x, y+ x/2) = 0$, we get that $x= r /4$, $r \in \{ 0, 1,2\}$ and therefore $(x,y) \in \widetilde{\mathcal S_k}$.
	This proves assertion (ii). By (i) we have that $L(x,y)_{top}$ is finite-dimensional for $(x,y) \in \mathcal S_k$.  One can easily see that $h_i( r /4, - \frac{1}{2}) \ne 0 $ for each $i \in {\Z}_{>0}$, $ r=0, 1, 2$, since
	\begin{align*}  
	h_i(0 , -1/2) &=  -1/8 (2i-1)(4i-1)\ne 0,  \\ 
	h_i(1/2 , -1/2) &=  -1/8 (2i+1)(4i+1)\ne 0, \\   
	h_i(1/4 , -1/2) &=  -1/16(4i-1)(4i+1) \ne 0,  
	\end{align*} which implies that  $L(x,y)_{top}$ is infinite-dimensional. This proves assertion (iii).

The proof follows.
\end{proof}

\section{Modules for $\mathcal W_{-9/4}$  outside of the  category $\mathcal{O}$} 
\
Bershadsky-Polyakov vertex algebra $\mathcal{W}_k$ is a part of a series of vertex algebras which can be realized using vertex algebras from logarithmic conformal field theory, the so-called $\mathcal B_p$--algebras. For $p=3$, vertex algebra $\mathcal B_3$ is realized in the paper by D. Adamovi\' c  \cite{A-2005} as the affine vertex algebra associated to $\widehat{sl_2}$ at level $-4/3$. $\mathcal B_p$-algebras for $p \ge 4$ are defined in the paper by T. Creutzig, D. Ridout and S. Wood \cite{CRW}, where they conjectured that those algebras can be realized using quantum hamiltonian reduction. This statement hasn't been proven in full generality, but in the case $p=4$ it turns out that $\mathcal B_4$ coincides with the Bershadsky-Polyakov algebra $\mathcal{W}_k$ at level $k=-9/4$.

\subsection{$\mathcal B_4$--algebra realization}



 It is conjectured that  $\mathcal B_p$ is isomorphic to the affine $W$--algebra $W_{p-1} ^{(2)}$ introduced by Feigin and Semikhatov.  The authors prove this conjecture for $p \le 5$. For $p=4$, this gives the Bershadsky-Polyakov vertex algebra.
 
 In this section  we shall recall the definition of  $\mathcal B_p$ in the case $p=4$, and present an alternative proof of simplicity which uses the representation theory of  $\mathcal W_{k}$ for $k=-9/4$.

We start by describing the construction of the doublet vertex algebra $\mathcal A(4)$, which is a special case of the series of vertex algebras $\mathcal A(p)$ from the paper of  D. Adamovi\' c and A. Milas \cite{AdM-doublet}.

Let $L$ be an even lattice such that
$$ L = \mathbb{Z} \gamma + \mathbb{Z}\delta \, \quad \langle \gamma, \gamma  \rangle = - \langle \delta, \delta \rangle = 2, \ \langle \gamma, \delta \rangle = 0. $$ 

Let $\mathcal A(4)$ be the vertex algebra generated by $$ a^- = e^{-\gamma}, a^+ = Q a^-, \omega_{\mathcal A (4)  }= \frac{1}{4} \gamma (-1) ^2 + \frac{3}{4} \gamma(-2),  $$  
where $Q = e^{2 \gamma}_0$ is a screening operator.

We shall need the following formulas
\begin{align*}  
&a^+ _4 a^- = - a^- _ 4 a^ +  = - 20\mathbbm{1}, \\
&a^+ _3 a^- = a^- _ 3 a^ +  =0,  \\
&a^+ _2  a^- = - a^- _ 2 a^ + = 2 \gamma(-1) ^2 + 6 \gamma(-2) =  8 \omega_{\mathcal A(4)}. \\
&a^+ _1  a^- = A   D \omega_{\mathcal A(4)}. \quad (A = 4).
\end{align*}
 
Then the vertex algebra $\mathcal B_4$ (cf. \cite{CRW}) is defined as a subalgebra of $\mathcal A(4) \otimes V_{\Z \delta} \subset V_L$, generated by
\begin{align*}
\tau^- &= - \frac{\sqrt{6} }{8}a^- \otimes e^{-\delta}=  - \frac{\sqrt{6} }{8}  e^{-\gamma-\delta}, &\\  
\tau^+ &=  \frac{\sqrt{6} }{8} a^+ \otimes e^{\delta} =  \frac{\sqrt{6} }{8}  \frac{1}{3} ( 4  \gamma(-1) ^3 + 6 \gamma(-1) \gamma(-2) + 2 \gamma(-3) ) e^{\gamma+ \delta}, &\\ 
j &= -\frac{1}{2} \delta(-1), &\\ 
\omega &= \omega_{\mathcal A (4)} - \frac{1}{4} \delta(-1) ^2. & 
\end{align*}

Direct computation shows that
\begin{align*}
\tau^+ _2 \tau^- &= \frac{15}{8}\mathbbm{1}  = (k+1) (2k +3)\mathbbm{1},  &\\
\tau^+ _1 \tau^- &=  \frac{15}{8} \delta(-1) = 3 (k+1)  j, &\\
\tau^+ _0 \tau^- &= -\frac{6}{64} (8 \omega_{\mathcal A(4)} - 10 \delta(-1) ^2 - 10 \delta(-2))  &\\
&= -(k+3) \omega + 3 J_{-1} ^2 + \frac{3 (k+1)}{2} J_{-2}. & 
\end{align*}


  \begin{proposition} For $k=-9/4$ it holds that
        $$\mathcal{W}_k \cong  \mathcal B_4. $$
    \end{proposition}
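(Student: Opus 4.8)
The plan is to establish the isomorphism $\mathcal{W}_k \cong \mathcal{B}_4$ in two stages: first exhibit a surjective vertex algebra homomorphism from the universal algebra $\mathcal{W}^k$ onto $\mathcal{B}_4$, and then identify the kernel precisely as the maximal ideal, so that the image is the simple quotient $\mathcal{W}_k$. First I would verify that the assignment
$$ J \mapsto j, \quad \omega \mapsto \omega, \quad G^+ \mapsto \tau^+, \quad G^- \mapsto \tau^- $$
extends to a homomorphism $\Theta : \mathcal{W}^k \to \mathcal{B}_4$ for $k=-9/4$. By the universal property of $\mathcal{W}^k$, it suffices to check that the generating fields $j, \omega, \tau^\pm$ satisfy the defining OPE relations of the Bershadsky-Polyakov algebra at level $k=-9/4$. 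The central charge matches since $\overline c_k$ and the lattice data were chosen accordingly, and the displayed computations of $\tau^+_n \tau^-$ for $n=0,1,2$ are exactly the three nontrivial singular components of the $G^+(z)G^-(w)$ OPE, with coefficients $(k+1)(2k+3)=\tfrac{15}{8}$, $3(k+1)$, and the $-(k+3)\omega + 3J_{-1}^2 + \tfrac{3(k+1)}{2}J_{-2}$ term all reproduced correctly. The remaining relations (the Heisenberg self-OPE of $j$, the $j$-charge assignments $\pm$ on $\tau^\pm$, the Virasoro action, and $\tau^\pm(z)\tau^\pm(w)\sim 0$) follow directly from the lattice vertex operator calculus in $V_L$, since $\tau^\pm$ are essentially vertex operators attached to $\pm(\gamma+\delta)$ dressed by elements of $\mathcal{A}(4)$.

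Next I would argue surjectivity: $\mathcal{B}_4$ is by definition generated by $\tau^\pm, j, \omega$, which all lie in the image of $\Theta$, so $\Theta$ is onto. Thus $\mathcal{B}_4 \cong \mathcal{W}^k / \ker\Theta$, and it remains to show that $\ker\Theta$ is the maximal proper ideal, equivalently that $\mathcal{B}_4$ is simple. This is where the representation theory developed in Section 6 enters. The strategy is to show that $\Theta$ factors through $\mathcal{W}_k$, i.e. that $\ker\Theta$ contains the singular vector $\Omega_3$ from Lemma \ref{sing_k_9_4}: I would check that $\Theta(\Omega_3) = 0$ in $\mathcal{B}_4$, either by direct computation in $V_L$ or, more efficiently, by a dimension/weight-space count analogous to the argument used for $\Omega_4$ in the $k=-5/3$ case, comparing $\dim \mathcal{W}^k_{3,0}$ against the dimension of the corresponding weight-zero, conformal-weight-three subspace of $\mathcal{B}_4$ and invoking uniqueness of the singular vector. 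Once $\Theta(\Omega_3)=0$, the map descends to a surjection $\mathcal{W}_k \twoheadrightarrow \mathcal{B}_4$, and since $\mathcal{W}_k$ is simple by definition, this surjection is an isomorphism provided $\mathcal{B}_4 \neq 0$, which is clear.

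The main obstacle will be the simplicity of $\mathcal{B}_4$, or equivalently ruling out that $\Theta$ has a larger kernel than expected — the alternative proof of simplicity advertised in the text. Here I would use the classification from Proposition \ref{klas_9_4}: if $\mathcal{B}_4$ were not simple, then as a $\mathcal{W}_k$-module it would contain a proper nontrivial ideal, hence a singular vector generating an irreducible highest weight subquotient $L(x,y)$ with $(x,y)\in \mathcal{S}_k \cup \widetilde{\mathcal{S}_k}$ and with the weight constrained to be realizable inside the lattice realization. Since $\mathcal{B}_4 \subset V_L$ embeds into a simple lattice-type object and the highest weight $(0,0)$ vacuum generates everything, comparing the finitely many admissible weights against those actually occurring in $\mathcal{B}_4$ forces the only vacuum-type submodule to be trivial, giving simplicity. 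I expect the genuine computational burden to be the lattice-VOA verification of the $\tau^+(z)\tau^-(w)$ OPE through order $(z-w)^{-3}$ and the confirmation that $\omega_{\mathcal{A}(4)} - \tfrac14\delta(-1)^2$ has central charge $\overline c_{-9/4} = \tfrac{15}{8}$ matching the required value, but these are the routine pieces; the conceptual step is leveraging the earlier module classification to secure simplicity without recourse to the unproven general $\mathcal{B}_p$ conjecture.
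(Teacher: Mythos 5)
Your proposal follows essentially the same route as the paper: realize $j,\omega,\tau^{\pm}$ inside $\mathcal A(4)\otimes V_{\Z\delta}$ and check the OPEs to obtain $\mathcal B_4$ as a quotient of $\mathcal W^{-9/4}$, verify that the singular vector $\Omega_3$ vanishes in the realization (the paper does this by the direct computation of $\tau^+_{-1}\tau^-$ rather than a dimension count), and then invoke the classification of Proposition \ref{klas_9_4} to show that any proper subquotient would have a highest weight in $\mathcal S_k\setminus\{(0,0)\}$ that cannot occur in $\mathcal B_4$, whence simplicity. One small correction: the central charge of $\omega=\omega_{\mathcal A(4)}-\tfrac14\delta(-1)^2$ is $c_{-9/4}=-\tfrac{23}{2}$ (and $\overline c_{-9/4}=-10$ for the shifted Virasoro vector), not $\tfrac{15}{8}$, which is instead the coefficient $(k+1)(2k+3)$ of the third-order pole in $\tau^+(z)\tau^-(w)$.
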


\begin{proof} The above computation shows that $G^{\pm} = \tau^{\pm}, j, \omega$ generate a subalgebra of $\mathcal A(4) \otimes V_{\Z \delta}$  isomorphic to a certain quotient of the universal Bershadsky-Polyakov algebra $\mathcal{W}^k$. Hence $\mathcal B_4$ is a certain quotient of  $\mathcal{W}^k$.

In the realization, we have
\begin{align*}   
 \tau^+ _{-1} \tau^-   =  &    -\frac{3}{32} ( a^+ _{1} a^-    +   a^+ _{1} a^-  \delta(-1) -\frac{20}{6} (\delta(-1) ^3 + 3 \delta(-2) \delta(-1)  + 2 \delta(-3) ) )  & \\
  =&  -\frac{3}{32}  (  A D \omega_{\mathcal A(4)}   +   8  \omega_{\mathcal A(4)}  \delta(-1)-\frac{20}{6} (\delta(-1) ^3 + 3 \delta(-2) \delta(-1)  + 2 \delta(-3) ) ) & \\
  =&  -\frac{3}{32}  ( A L_{-3} + \frac{A}{2} \delta(-2) \delta(-1)  + 8 L_{-2} \delta( -1)  + 2 \delta(-1) ^3  - 8 \delta(-3) - &\\
   &-\frac{20}{6} (\delta(-1) ^3 + 3 \delta(-2) \delta(-1)  + 2 \delta(-3) )  ) & \\
  =&  -\frac{3}{32}  ( A L_{-3} + \frac{A}{2} \delta(-2) \delta(-1)  + 8 L_{-2} \delta( -1) - \frac{8}{6}  \delta(-1) ^3  - 8 \delta(-3)  - &\\
  & - \frac{20}{6} (  3 \delta(-2) \delta(-1)  + 2 \delta(-3) ) ) & \\
  =&  -\frac{3}{32}  (  4 L_{-3}  - 8  \delta(-2) \delta(-1)  + 8 L_{-2} \delta( -1) - \frac{4}{3}  \delta(-1) ^3   -\frac{44}{3}  \delta(-3) ) & \\ 
  =& - \frac{3}{8} L_{-3}  + 3 J_{-2} J_{-1}  + \frac{3}{2} L_{-2} J_{-1}   - J_{-1}  ^3 - \frac{11}{4}  J_{-3},  & 
\end{align*} 
 which implies that the singular vector $\Omega_3$ (cf. Lemma \ref{sing_k_9_4}) vanishes in the realization, i.e.,  $ \Omega_3 = 0 $ in   $\mathcal B_4$.

 We can consider   $\mathcal B_4$ as a ($\Z_{\ge 0}$--graded) VOA with Virasoro vector $\overline \omega  = \omega + \frac{1}{2} D J $. Clearly,   for every irreducible subquotient $L(x,y)$  of  $\mathcal B_4$, we have $\dim L(x,y)_{top} < \infty$.

Assume that $\mathcal B_4 \ne \mathcal W_k$. Then by Proposition \ref{klas_9_4} (ii), (iii), $\mathcal B_4$ has an irreducible subquotient isomorphic to $L(x,y)$ for  $(x,y) \in \mathcal S_k \setminus \{ (0,0) \}  = \{ (-\frac{1}{2},0), (-\frac{1}{4}, -\frac{1}{4}) \}$. On the other hand, there are no vectors in $\mathcal B_4$ having such weight.
A contradiction. This  implies  that $\mathcal B_4$  is a simple vertex algebra and hence isomorphic to  $\mathcal{W}_k$.

\end{proof}  
    
\subsection{Construction of a family of weight modules outside of the  category $\mathcal{O}$} 
  
Let us choose a new Virasoro vector $$L= \omega - \frac{1}{4} \delta(-2). $$ Let $D= {\Z} (\gamma+ \delta)$. Then $\mathcal{W}_k$ is realized as a subalgebra of $$\Pi(0) =M(1) \otimes {\C}[D]. $$ Specifically, for every $s \in \mathbb{Z}$ and $r \in \mathbb{C}$ it holds that
$$\mathcal M _s (r) := \Pi(0). e^{s  \delta+ r (\gamma +\delta ) }$$   is an irreducible $\Pi(0)$--module (cf. \cite{BDT}, \cite{LW}).

It holds that
\begin{align*}
L(n) e^{s  \delta+ r (\gamma +\delta ) } &=  0 \quad (n \ge 1) &\\
L(0) e^{s  \delta+ r (\gamma +\delta ) } &= \frac{4 r^2 - 6 r - 4 (r+s) ^2 - 2(r+s)  }{4}  e^{s  \delta+ r (\gamma +\delta ) } &\\ 
&= \frac{-8 r - 8 r s - 4 s^2 - 2 s }{4} e^{s  \delta+ r (\gamma +\delta )} & \end{align*}

From these formulas it follows that $ \mathcal M _s (r)$ is $\mathbb{Z}_{\ge 0}$--graded if and only if $s=-1 $. Then for every $r$ it holds that
$$ L(0)  e^{-  \delta+ r (\gamma +\delta ) } =-\frac{1}{2} e^{-\delta+ r (\gamma +\delta ) }.$$
In this way we have constructed an infinite series of  $\mathbb{Z}_{\ge 0}$--graded $\mathcal{W}_k$--modules  with lowest conformal weight $-\frac{1}{2}$.

\begin{theorem} \label{M_r_modul}
    \begin{enumerate}
    \item For every $r \in \mathbb{C}$, $\mathcal M _{-1} (r)$ is a $\mathbb{Z}_{\ge 0}$--graded $\mathcal{W}_k$--module with lowest conformal weight $-1/2$:
	$$ \mathcal M _{-1} (r) = \bigoplus _{m =0} ^{\infty}  \mathcal M _{-1} (r)  (m),  \quad L(0) \vert  \mathcal M _{-1} (r)  (m) \equiv (-\frac{1}{2} + m) \mbox{Id}. $$
    \item Assume that  $ 4 r \notin \mathbb{Z}$ for some $r \in \mathbb{C}$.  Then: 
    \begin{itemize} 
      \item[(i)] $\mathcal M _{-1} (r)  (0)$ is an irreducible module for the Smith algebra $R(g)$, for
	$$ g (x,y) = - (3 x^2  - (2k+3) x -(k+3) y)$$
	where $y= -1/2$.
	  \item[(ii)] $\mathcal{W}_k$--module $\mathcal M _{-1} (r)$ is irreducible.
	\end{itemize}
    \end{enumerate}
\end{theorem}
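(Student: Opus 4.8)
The plan is to handle the two parts separately, with part (2) resting on an explicit analysis of the top component as a module over the Smith algebra $R(g)$.

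For part (1), I would argue that $\mathcal M_{-1}(r)$ is a $\mathcal W_k$--module simply by restriction: since $\mathcal W_k \hookrightarrow \Pi(0)$ (the $\mathcal B_4$--realization) and $\mathcal M_{-1}(r) = \Pi(0).e^{-\delta + r(\gamma+\delta)}$ is an irreducible $\Pi(0)$--module, it is in particular a $\mathcal W_k$--module. The grading claim then follows from the $L(0)$--computation recorded just before the theorem: every charge-shifted ground state $e^{-\delta + (r+n)(\gamma+\delta)}$ has $L(0)$--eigenvalue $-\tfrac12$ (the coefficient of $\gamma+\delta$ cancels in the displayed formula), while each Heisenberg creation mode raises $L(0)$ by a positive integer. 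Hence $L(0)$ has spectrum $-\tfrac12 + \mathbb{Z}_{\ge 0}$, giving the stated $\mathbb{Z}_{\ge 0}$--grading with lowest weight $-\tfrac12$.

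For part (2)(i), I would first identify the top space explicitly as $\mathcal M_{-1}(r)(0) = \bigoplus_{n\in\mathbb{Z}}\mathbb{C}v_n$, where $v_n := e^{-\delta + (r+n)(\gamma+\delta)}$, on which $L(0) = -\tfrac12$; thus $Y = [\overline\omega]$ acts by the scalar $-\tfrac12$ and $\mathcal M_{-1}(r)(0)$ is a module over $R(g)$ with $y = -1/2$ (Proposition \ref{smith-zhu}). A short charge count gives $J(0)v_n = (r+n-1)v_n$, and since $G^+(0)$ and $G^-(0)$ shift the lattice charge by $\pm(\gamma+\delta)$ while preserving $L(0)$, the one-dimensionality of the weight spaces forces $G^+(0)v_n = c_n^+ v_{n+1}$ and $G^-(0)v_n = c_n^- v_{n-1}$ for scalars $c_n^\pm$. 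Because the $J(0)$--weights $r+n-1$ are pairwise distinct, every $R(g)$--submodule is spanned by a subset of the $v_n$, so irreducibility of $\mathcal M_{-1}(r)(0)$ is equivalent to $c_n^+\ne 0$ and $c_n^-\ne 0$ for all $n$.

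Computing these structure constants is the technical heart. Since $\tau^- = -\tfrac{\sqrt6}{8}e^{-\gamma-\delta}$ carries no oscillator dressing, its zero mode acts on a ground state through the leading term of the vertex operator alone, so $c_n^-$ equals $-\tfrac{\sqrt6}{8}$ times a nonzero cocycle factor and is nonzero for every $n$; thus $G^-(0)$ never vanishes. For $c_n^+$ I would exploit the Smith relation $EF-FE=g(X,Y)$ of Proposition \ref{smith-zhu}: writing $\Phi(r+n) := G^-(0)G^+(0)\big|_{v_n} = c_n^+ c_{n+1}^-$, that relation gives the difference equation $\Phi(a)-\Phi(a-1) = g(a-1,-1/2)$, whose polynomial solution is $\Phi(a) = -a^3 + \tfrac34 a^2 - \tfrac18 a + D$. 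The one genuinely computational step is to fix the integration constant by evaluating $G^+(0)$ on a single ground state in the lattice model; this yields $D=0$, i.e.\ $\Phi(a) = -a(a-\tfrac14)(a-\tfrac12)$. Consequently $c_n^+ = 0$ exactly when $r+n\in\{0,\tfrac14,\tfrac12\}\subset\tfrac14\mathbb{Z}$, in agreement with the $x$--values of $\widetilde{\mathcal S_k}$. The hypothesis $4r\notin\mathbb{Z}$ forces $r+n\notin\tfrac14\mathbb{Z}$ for all $n$, so all $c_n^\pm\ne 0$ and $\mathcal M_{-1}(r)(0)$ is irreducible, proving (i).

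For part (2)(ii) I would deduce irreducibility of the whole module from that of its top via the standard criterion: a $\mathbb{Z}_{\ge 0}$--graded module whose top is irreducible over the Zhu algebra is irreducible provided both it and its contragredient are generated by their top components. Generation of $\mathcal M_{-1}(r)$ by $\mathcal M_{-1}(r)(0)$ follows by refining the $\Pi(0)$--cyclicity to $\mathcal W_k$: the modes $J(-1)$ recover the $\delta$--oscillators, the modes $G^\pm(-m)$ (whose fields are dressed by $e^{\pm(\gamma+\delta)}$) recover the $\gamma$--oscillators, and the zero modes $G^\pm(0)$ already realize the lattice shifts, so $\mathcal W_k.\mathcal M_{-1}(r)(0) = \Pi(0).v_0 = \mathcal M_{-1}(r)$; the same statement applied to the contragredient, identified as another bi-infinite weight module of the same type via the symmetry of Lemma \ref{simetrija-1}, completes the argument. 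I expect the \emph{main obstacle} to be the determination of $D$, i.e.\ the explicit lattice computation of $G^+(0)$ on a ground state: since $\tau^+$ is dressed by three $\gamma$--oscillators, extracting its oscillator-free matrix element requires a careful Wick-type contraction, and although the difference equation reduces this to a single value, that value must still be computed by hand. A secondary subtlety is the generation-by-top argument in (ii), which must be carried out with the infinite-dimensional top in mind, since the category-$\mathcal O$ (highest-weight Verma) reasoning does not apply here.
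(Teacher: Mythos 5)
Parts (1) and (2)(i) of your proposal are essentially the paper's argument. The identification of the top component, the weight bookkeeping, and the reduction of irreducibility of $\mathcal M_{-1}(r)(0)$ to the nonvanishing of the structure constants $c_n^{\pm}$ all match; your determination of $c_n^{+}c_{n+1}^{-}$ via the Smith-algebra difference equation plus one normalization is a mild variation on the paper's direct lattice computation $G^+(0)e^{-\delta+r(\gamma+\delta)}=\nu\binom{4r}{3}e^{-\delta+(r+1)(\gamma+\delta)}$, and your polynomial $-a(a-\tfrac14)(a-\tfrac12)$ agrees with $-\nu^2\binom{4a}{3}$, so the vanishing locus and the role of the hypothesis $4r\notin\mathbb{Z}$ come out the same.

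The genuine gap is in part (2)(ii). Your criterion (irreducible top plus generation of both $M$ and $M^*$ by their top components) is standard, but it is not available here as stated: $\mathcal M_{-1}(r)$ has \emph{infinite-dimensional} $L(0)$--eigenspaces, and the contragredient module is defined in this paper (following \cite{FHL}, \cite{DLM2}) only for ordinary modules with finite-dimensional $L(0)$--graded pieces. Lemma \ref{simetrija-1} concerns highest weight modules $L(x,y)$ and does not identify the dual of a bi-infinite weight module; to use a duality argument you would have to construct the restricted dual with respect to the finer $(J(0),L(0))$--weight decomposition and verify it carries a module structure, which is nontrivial extra work you do not carry out. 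Moreover, the generation-by-top claims for both $\mathcal M_{-1}(r)$ and its putative dual are only sketched (``the modes recover the oscillators''), and proving them is essentially as hard as the irreducibility you are after. The paper's route avoids all of this: if $\mathcal M_{-1}(r)$ were reducible, then since its degree-zero piece is an irreducible Zhu-algebra module, one obtains a $\mathbb{Z}_{\ge 0}$--graded subquotient whose top sits in conformal degree $-\tfrac12+n$ with $n\ge 1$; the relation $[G^+]([\overline\omega]+\tfrac12)=0$ then forces $[G^+]=0$ on that top, hence the top is $1$--dimensional, and Proposition \ref{klas_9_4} forces its conformal weight to be $0$ or $-\tfrac14$, neither of which lies in $-\tfrac12+\mathbb{Z}_{\ge 0}$. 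You should replace your duality argument with this one (or supply the missing weight-module duality and generation statements).
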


\begin{proof} 

(1) First notice that
$ e^{-  \delta+ r (\gamma +\delta ) }$ is a vector with $(J(0), L(0))$--weight $$ (x,y ) = (r -1, -1/2).$$
It holds that
$$
G^- (0) e^{-  \delta+ r (\gamma +\delta ) } = \tau^- _1  e^{-  \delta+ r (\gamma +\delta ) }  =  -  \nu e^{-  \delta+ (r-1) (\gamma +\delta ) }  \quad (\nu = \frac{\sqrt{6}}{8}). 
$$
From the relation
$$e^{-  \delta+ r (\gamma +\delta ) }  _ 0 \tau^+  =   - \nu  {  4 r   \choose 3 } e^{-  \delta+ ( r+1) (\gamma +\delta ) } $$ we have that
$$G^+  (0) e^{-  \delta+ r (\gamma +\delta ) }  =  \tau^+ _0 e^{-  \delta+ r (\gamma +\delta ) }   =  \nu  { 4 r    \choose 3 }  e^{-  \delta+ ( r+1) (\gamma +\delta ) }. $$
Now
\begin{align*}
[G^+  (0), G^-(0)] e^{-  \delta+ r (\gamma +\delta ) }  &= \left( - \nu ^2   {  4 r - 4    \choose 3 }  +  \nu^2 { 4  r   \choose 3 }   \right) e^{-  \delta+ r (\gamma +\delta ) } &\\
&= (3r^2 - 9r/2 + 15/8)e^{-  \delta+ r (\gamma +\delta ) } &\\
&= - (3 x^2  - (2k+3) x -(k+3) y)e^{-  \delta+ r (\gamma +\delta ) } &\\
&=  g( x, y)  e^{-  \delta+ r (\gamma +\delta ) }. &
\end{align*}

(2) From the above formulas it follows that the  top component  of the module  $\mathcal M _{-1} (r)$ can be realized as $$ \mathcal M _{-1} (r) (0)  = \mbox{span}_{\mathbb{C}} \{ e^{-  \delta+ (m + r) (\gamma +\delta ) }  \ \vert \ m \in \mathbb{Z} \}$$ and that it is also a module for the Smith algebra $R(g)$, where $g(x,y) =- ( 3 x^2  - (2k +3) x - (k+3) y$.  The Smith algebra $R(g)$ acts with
\begin{align*}
	E  e^{-  \delta+ (m + r) (\gamma +\delta ) }  &= G^+ (0)  e^{-  \delta+ (m + r) (\gamma +\delta )  }=  \nu  { 4 (r+m)    \choose 3 }  e^{-  \delta+ (m+  r+1) (\gamma +\delta ) }  &\\
	F  e^{-  \delta+ (m + r) (\gamma +\delta ) }  &= G^- (0)  e^{-  \delta+ (m + r) (\gamma +\delta ) }=  - \nu     e^{-  \delta+ ( m+r- 1) (\gamma +\delta ) }  &\\
	X  e^{-  \delta+ (m + r) (\gamma +\delta ) } &=  J(0)    e^{-  \delta+ (m + r) (\gamma +\delta ) }  = (m + r -1)  e^{-  \delta+ (m + r) (\gamma +\delta ) }   &\\
	Y  e^{-  \delta+ (m + r) (\gamma +\delta ) } & = L(0)    e^{-  \delta+ (m + r) (\gamma +\delta ) }  = -\frac{1}{2} e^{-  \delta+ (m + r) (\gamma +\delta ) } &
\end{align*}
Now it follows that $\mathcal M_{-1} (r) _{top}$ is an irreducible $R(g)$--module if the condition $  { 4 (r+m)\choose 3 }  \ne 0 $ holds for every $m \in \mathbb{Z}$, which is satisfied if  $ 4 r \notin \mathbb{Z}$. This concludes the proof of (i).
	
Let us prove (ii). Assume that   $\mathcal M _{-1} (r)$  is reducible. Since $\mathcal M _{-1} (r) (0)$ is an irreducible $A(\mathcal W_k)$--module, we have that  $\mathcal M _{-1} (r)$ has     a ${\Z}_{\ge 0}$-graded subquotient 
$$ M= \bigoplus _{n \in {\Z}_{\ge 0}} M(n), \quad M(0)  \cap \mathcal M _{-1} (r) (0) = \{ 0 \} .$$
Moreover, the  top component  $M(0)$  is then a module for the Zhu algebra $A(\mathcal W_k)$.
From the relation $[G^{+}] ([\omega] + 1/2) =0$ in the Zhu algebra  it follows that $M(0)$  is  a $1$--dimensional $A(\mathcal W_k)$--module. Now Proposition \ref{klas_9_4}  implies that $M(0)$  has  to have lowest conformal weight $0$  or $-1/4$.  But this is impossible, since there is no vector of conformal weight $0$ or $-1/4$ in $\mathcal M _{-1} (r)$. This concludes the proof of the theorem. 

\end{proof}

As a consequence, we can construct $3$ irreducible $\mathcal{W}_k$--modules in the category $\mathcal O$:
\begin{lemma} \label{lema_1/4_a}
	$L(r/4,-1/2)$ is a $\mathcal{W}_k$--module, which is realized as a subquotient of the $\mathcal{W}_k$--module $\mathcal M_{-1} (r)$ for $r = 0,1,2 $.
\end{lemma}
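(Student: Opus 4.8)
The plan is to extract the statement from the explicit realization of Theorem~\ref{M_r_modul}, by working at the value of the parameter where $\mathcal M_{-1}(\cdot)$ just fails to be irreducible and examining the top space as a module over the Smith algebra $R(g)$. I would take the family member $\mathcal M_{-1}(\rho)$ with $\rho=r/4$, so that $4\rho=r\in\{0,1,2\}$; this is precisely the boundary complementary to the irreducibility criterion $4\rho\notin\mathbb{Z}$ of Theorem~\ref{M_r_modul}(2). On the top space $\mathcal M_{-1}(\rho)(0)$, with basis $e^{-\delta+(m+\rho)(\gamma+\delta)}$ ($m\in\mathbb{Z}$), the theorem gives the $R(g)$--action explicitly: $E=[G^+]$ shifts upward with coefficient $\nu\binom{4(\rho+m)}{3}$, $F=[G^-]$ shifts downward by the nonzero scalar $-\nu$, and $J(0)$, $L(0)$ act diagonally with eigenvalues $m+\rho-1$ and $-1/2$.

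The first step is to locate the break in the $E$--chain. Since $\binom{N}{3}=0$ exactly when $N\in\{0,1,2\}$, and here $N=4(\rho+m)=r+4m$ with $r\in\{0,1,2\}$, the coefficient vanishes for the single index $m=0$; thus $E$ annihilates the generator $e^{-\delta+\rho(\gamma+\delta)}$. Consequently the span $N$ of the basis vectors with $m\le 0$ is a proper $R(g)$--submodule (it is stable under $F$, which only moves downward, and under $E$, which from an index $m<0$ lands at $m+1\le 0$ and from $m=0$ gives zero). I would then pass to the quotient $Q=\mathcal M_{-1}(\rho)(0)/N$, spanned by the images of the vectors with $m\ge 1$.

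The main step is to identify $Q$ with the top space of the desired module. In $Q$ the class of the $m=1$ vector is killed by $F$ (its $F$--image sits in $N$) and is moved by $E$ with coefficient $\nu\binom{r+4}{3}\ne 0$, so $Q$ is the highest weight $R(g)$--module generated from this class under $E$; reading off $J(0)$ and $L(0)$ gives highest weight $(\rho,-1/2)=(r/4,-1/2)$. Since $\binom{r+4m}{3}\ne 0$ for every $m\ge 1$, the $E$--chain has no further breaks and $Q$ is irreducible and infinite--dimensional, in agreement with Proposition~\ref{klas_9_4}. Hence $Q\cong L(r/4,-1/2)_{\mathrm{top}}$ as $A(\mathcal W_k)$--modules.

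Finally I would invoke the standard correspondence between $\mathbb{Z}_{\ge 0}$--graded modules and their top--level Zhu modules: an irreducible subquotient of the $A(\mathcal W_k)$--module $\mathcal M_{-1}(\rho)(0)$ is the top space of an irreducible $\mathcal W_k$--subquotient of $\mathcal M_{-1}(\rho)$. Applied to $Q$, this exhibits $L(r/4,-1/2)$ as a subquotient of $\mathcal M_{-1}(\rho)$; concretely, dividing $\mathcal M_{-1}(\rho)$ by the $\mathcal W_k$--submodule generated by $N$ gives a graded module with top $Q$, whose irreducible quotient is $L(r/4,-1/2)$. The step needing the most care is the submodule analysis: one must confirm that the structure constant vanishes at exactly one index—so that $N$ is genuinely a submodule and $Q$ a genuine highest weight module without hidden singular vectors—and that the resulting highest weight is $(r/4,-1/2)$ for each of $r=0,1,2$. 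Both are immediate from the explicit binomial coefficients once the normalization $\rho=r/4$ is used, which is the subtle point concealed in the indexing of the family.
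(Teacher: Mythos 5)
Your argument is correct and follows essentially the same route as the paper: both locate the unique break in the $G^+(0)$--chain on the top level of $\mathcal M_{-1}(r/4)$ (the vanishing of $\binom{r+4m}{3}$ exactly at $m=0$), pass to the quotient by the submodule generated by the vectors at and below the break, and read off the highest weight $(r/4,-1/2)$ of the first vector above it, your only addition being the slightly more systematic packaging via the $R(g)$--module structure on the top level and the Zhu correspondence. You also correctly resolve the normalization $\rho=r/4$, which the paper's own proof leaves implicit (it writes $\mathcal M_{-1}(r)$ and $e^{-\delta+r(\gamma+\delta)}$ where $r/4$ is meant, as the displayed binomial coefficients and the final weight $(r/4,-1/2)$ show).
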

\begin{proof}
	For $r = 0,1,2 $ it holds that
	\begin{align*}
	G^+  (0) e^{-  \delta+ r (\gamma +\delta ) }  =  \tau^+ _0 e^{-  \delta+ r (\gamma +\delta ) }   &=   \nu  { 4 r    \choose 3 }  e^{-  \delta+ ( r+1) (\gamma +\delta ) } &\\
	&=  \nu  { 1    \choose 3 }  e^{-  \delta+ ( r+1) (\gamma +\delta ) } &\\
	&= 0, &
	\end{align*} 
	hence $U:= \langle e^{-  \delta+ r (\gamma +\delta ) } \rangle $ is a proper submodule of $\mathcal M_{-1} (r)$.
	
	\smallskip
	
	Let $v(x,y) = e^{-  \delta+ (r+1)(\gamma +\delta ) }$. Since
	\begin{align*}
	J(0)v(x,y) &= J(0)e^{-  \delta+ (r+1 )(\gamma +\delta ) } = re^{-  \delta+ (r+1) (\gamma +\delta ) } = rv(x,y), &\\
	L(0)v(x,y) &= L(0)e^{-  \delta+ (r+1 )(\gamma +\delta ) } = -\frac{1}{2}e^{-  \delta+ (r+1 )(\gamma +\delta ) }= -\frac{1}{2}v(x,y), &\\
	G^{-}(0)v(x,y) &= G^{-}(0)e^{-  \delta+ (r+1 )(\gamma +\delta ) } = \nu e^{-  \delta+ r(\gamma +\delta ) } + U \in U,
	\end{align*}
	it follows that $ v(x,y)$ is a highest weight vector of weight
	$$ (x,y ) = (r , -1/2) = (r/4, -1/2)$$ in the quotient $\mathcal M_{-1} (r) / U$. Therefore,  $L(r/4,-1/2)$ is a $\mathcal{W}_k$--module.
\end{proof}

\section{Irreducible $\mathcal{W}_k$-modules for integer levels $k$}

In this section we study irreducible highest weight modules for the Bershadsky-Polyakov algebra $\mathcal{W}_k$ at integer levels $k$, $k \geq -1$. We generalize a construction of a family of singular vectors from the paper \cite{A1} and prove a necessary condition for highest weights of $\mathcal{W}_k$--modules, while finding a realization for these as $\mathcal{W}_k$--modules remains an open question. In this paper we obtain a classification for levels $k=-1, 0$ using results from papers by D. Adamovi\' c, V. G. Kac, P. M\" oseneder-Frajria, P. Papi, O. Per\v se \cite{AKMPP-2018}; and T. Arakawa, T. Creutzig, K. Kawasetsu, A. Linshaw \cite{ACKL}.

\subsection{Singular vectors and a necessary condition for $\mathcal{W}_k$--modules}

First we generalize a construction of a family of singular vectors by T. Arakawa in \cite{A1}, where he found a similar formula for singular vectors in $\mathcal W^k$ at levels $k= p/2-3$, $p \ge 3$, $p$ odd.

\begin{lemma} \label{integral-sing}
	Vectors $$G^+(-1)^{n}\mathbbm{1}, \; G^-(-2)^{n}\mathbbm{1}$$ are singular in $\mathcal{W}^k$ for $n = k+2$, where $k \in \mathbb{Z}$, $k \geq -1$.
\end{lemma}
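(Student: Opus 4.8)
The plan is to verify directly that $w_n:=(G^+_{-1})^n\mathbbm{1}$ is a highest weight (singular) vector, i.e. that it is annihilated by all the annihilation modes $J_m,L_m,G^+_m$ ($m\ge 1$) and $G^-_m$ ($m\ge 1$) of the generating fields, and to show that this happens when $n=k+2$. First I would dispose of the easy conditions, which hold for \emph{every} $n$. Since the fields $G^+$ mutually commute, $G^+_m$ commutes with $G^+_{-1}$ and $G^+_m\mathbbm{1}=0$ for $m\ge 0$, whence $G^+_m w_n=0$ for $m\ge 1$. Using $[J_m,G^+_{-1}]=G^+_{m-1}$ and $[L_m,G^+_{-1}]=\tfrac{m+3}{2}G^+_{m-1}$, for $m\ge 1$ each commutator produces a mode $G^+_{m-1}$ with $m-1\ge 0$; these commute past the remaining factors $G^+_{-1}$ and kill $\mathbbm{1}$, while $J_m\mathbbm{1}=L_m\mathbbm{1}=0$. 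Hence $J_m w_n=L_m w_n=0$ for $m\ge 1$, for all $n$ (in the shifted grading this is precisely $J(m)w_n=L(m)w_n=G^+(m)w_n=0$ for $m\ge 1$). Thus the only substantive conditions are $G^-_m w_n=0$, $m\ge 1$.

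Next I would reduce all of these to the single relation $G^-_1 w_n=0$. From $[J_1,G^-_m]=-G^-_{m+1}$ and $J_1 w_n=0$ we obtain $G^-_{m+1}w_n=-J_1\big(G^-_m w_n\big)$, so by induction on $m$ the vanishing $G^-_1 w_n=0$ forces $G^-_m w_n=0$ for all $m\ge 1$ (consistently, $G^-_m w_n$ vanishes on conformal-weight grounds once $m$ is large). Since $G^-_1=G^-(0)$, this is the single condition $G^-(0)w_n=0$.

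The crux is therefore to compute $\phi_n:=G^-_1(G^+_{-1})^n\mathbbm{1}$. Writing $\mathcal C:=[G^-_1,G^+_{-1}]=-3(J^2)_{-1}+3(k+1)J_{-1}+(k+3)L_{-1}$ and using $G^-_1\mathbbm{1}=0$, one gets the recursion $\phi_n=\mathcal C\,(G^+_{-1})^{n-1}\mathbbm{1}+G^+_{-1}\phi_{n-1}$, with $\phi_0=0$. Evaluating the action of $J_{-1}$, $L_{-1}$ and $(J^2)_{-1}$ on $(G^+_{-1})^{l}\mathbbm{1}$ — each reduces, via $[J_{-1},G^+_{-1}]=[L_{-1},G^+_{-1}]=G^+_{-2}$ and $(J^2)_{-1}G^+_{-1}\mathbbm{1}=2J_{-1}G^+_{-1}\mathbbm{1}$, to the two vectors $(G^+_{-1})^{l}J_{-1}\mathbbm{1}$ and $(G^+_{-1})^{l-1}G^+_{-2}\mathbbm{1}$ — shows that $\phi_n$ lies in the two-dimensional space spanned by $(G^+_{-1})^{n-1}J_{-1}\mathbbm{1}$ and $(G^+_{-1})^{n-2}G^+_{-2}\mathbbm{1}$. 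I would then prove by induction on $n$ that both coefficients of $\phi_n$ carry the common factor $(k+2-n)$, so that $\phi_n=(k+2-n)\,\Theta_n$ for a fixed nonzero vector $\Theta_n$; the base cases are $\phi_1=3(k+1)J_{-1}\mathbbm{1}$ and $\phi_2=2k\big(3J_{-1}G^+_{-1}\mathbbm{1}-G^+_{-2}\mathbbm{1}\big)$. In particular $\phi_n=0$ when $n=k+2$, which is exactly $G^-(0)w_n=0$. The main obstacle is precisely this last step: the inductive bookkeeping that carries the two coefficients through the recursion and exhibits the common factor $(k+2-n)$; everything else is formal.

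Finally, for the second family I would invoke the charge-conjugation automorphism $\sigma$ of $\mathcal W^k$ determined on generators by $J\mapsto -J$, $G^+\mapsto G^-$, $G^-\mapsto G^+$, $T\mapsto T$ (one checks it preserves all the defining operator products at the same level $k$ and fixes $\mathbbm{1}$). Since $\sigma(G^+_{-1})=G^-_{-1}$ and $G^-_{-1}=G^-(-2)$, we get $\sigma\big((G^+_{-1})^n\mathbbm{1}\big)=(G^-_{-1})^n\mathbbm{1}=(G^-(-2))^n\mathbbm{1}$; as automorphisms carry singular vectors to singular vectors, the singularity of $(G^-(-2))^n\mathbbm{1}$ for $n=k+2$ follows at once. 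Alternatively one runs the mirror computation with the roles of $G^+$ and $G^-$ interchanged.
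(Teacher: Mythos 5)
Your strategy is sound and, where it is carried out, correct; it also differs from the paper's proof in several worthwhile ways. The paper establishes singularity with respect to the shifted $\Z_{\ge 0}$-grading by checking $G^-(1)=G^-_2$ on $(G^+_{-1})^n\mathbbm{1}$ and $G^+(1)=G^+_1$, $G^+(2)=G^+_2$ on $(G^-_{-1})^n\mathbbm{1}$ via three separate inductions with closed-form right-hand sides, each visibly divisible by $k-(n-2)$, dismissing the remaining modes on weight/charge grounds. You instead target the zero mode $G^-_1=G^-(0)$ and derive all higher modes from it through $G^-_{m+1}w_n=-J_1G^-_m w_n$; this is cleaner, proves slightly more (that $w_{k+2}$ is a genuine highest weight vector in the sense of Proposition \ref{rep_exs}, not merely singular for the integer grading), and recovers the paper's formula for $G^-(1)w_n$ by applying $-J_1$ to your $\phi_n$. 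Your recursion and base cases are correct, and the induction you defer does close: in the basis $A_n=(G^+_{-1})^{n-1}J_{-1}\mathbbm{1}$, $B_n=(G^+_{-1})^{n-2}G^+_{-2}\mathbbm{1}$ one finds $\mathcal C\,(G^+_{-1})^{n-1}\mathbbm{1}=3(k+3-2n)A_n+2(n-1)(2k+6-3n)B_n$, so the coefficients satisfy $a_n=a_{n-1}+3(k+3-2n)$, $b_n=b_{n-1}+2(n-1)(2k+6-3n)$ and telescope to $a_n=3n(k+2-n)$, $b_n=2n(n-1)(k+2-n)$, i.e. $\phi_n=(k+2-n)\bigl(3n\,A_n+2n(n-1)\,B_n\bigr)$, consistent with your $\phi_1$ and $\phi_2$. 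You should, however, actually write this out: as submitted the argument stops exactly at the computation that carries all the content of the lemma, whereas the paper at least records the closed-form answers and the inductive step.

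The one genuine error is in the charge-conjugation automorphism. The map $J\mapsto -J$, $T\mapsto T$, $G^+\mapsto G^-$, $G^-\mapsto G^+$ does \emph{not} preserve the $G^+G^-$ OPE: by skew-symmetry,
$[G^-{}_\lambda G^+]=-\tfrac{\lambda^2}{2}(k+1)(2k+3)+3(k+1)\lambda J+\tfrac{3(k+1)}{2}DJ-3:JJ:+(k+3)T$,
which is the \emph{negative} of what your $\sigma$ would force it to be. The fix is to take $G^+\mapsto G^-$, $G^-\mapsto -G^+$ (any $G^+\mapsto\mu G^-$, $G^-\mapsto\nu G^+$ with $\mu\nu=-1$ works); this is harmless downstream, since the image of $(G^+_{-1})^n\mathbbm{1}$ is still $(G^-_{-1})^n\mathbbm{1}=(G^-(-2))^n\mathbbm{1}$ up to a nonzero scalar, and automorphisms fixing $T$ carry singular vectors to singular vectors. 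The paper sidesteps this by running the mirror induction for the second family, which is your stated fallback.
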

\begin{proof}

Observe that  $$J(m)G^+(-1)^{n}\mathbbm{1} =  L(m)G^+(-1)^{n}\mathbbm{1} = J(m)G^-(-2)^{n}\mathbbm{1} =  L(m)G^-(-2)^{n}\mathbbm{1} = 0$$ for $m > 0$. 
Also, it is easy to see that $G^-(m)G^+(-1)^{n}\mathbbm{1} = 0$ for $m > 1$ and $G^+(m)G^-(-2)^{n}\mathbbm{1} = 0$ for $m > 2$.

\medskip

To prove that $G^-(-2)^{k+2}\mathbbm{1}$ is a singular vector in $\mathcal{W}^k $, we need to show that $G^+(1)G^-(-2)^{n}\mathbbm{1} = G^+(2)G^-(-2)^{n}\mathbbm{1} = 0$ for $n = k+2$. We claim that
	\begin{equation*}
	\begin{split}
	G^+(1)G^-(-2)^{n}\mathbbm{1}& = 3n(k-(n-2))J(-1)G^-(-2)^{n-1}\mathbbm{1} + \\ & n(n-1)(k-(n-2))G^-(-3)G^-(-2)^{n-2}\mathbbm{1}.
	\end{split}
	\end{equation*}
We will prove this by induction. Assume that
	\begin{equation*}
	\begin{split}
	G^+(1)G^-(-2)^{n-1}\mathbbm{1}& = 3(n-1)(k-(n-3))J(-1)G^-(-2)^{n-2}\mathbbm{1} + \\ & (n-1)(n-2)(k-(n-3))G^-(-3)G^-(-2)^{n-3}\mathbbm{1}.
	\end{split}
	\end{equation*}
It holds that
	\begin{flalign*}
	G^+(1)G^-(-2)^{n}\mathbbm{1} 
	&= (3J^2(-1) + 3(k+1)J(-1) - (k+3)L(-1))G^-(-2)^{n-1}\mathbbm{1} + &\\ & + G^-(-2)G^+(1)G^-(-2)^{n-1}\mathbbm{1}&\\
	&= (3kn-3n^2+6n)J(-1)G^-(-2)^{n-1}\mathbbm{1} + \\& +(kn^2-kn-n^3+3n^2-2n)G^-(-3)G^-(-2)^{n-2}\mathbbm{1} \\
	&= 3n(k-(n-2))J(-1)G^-(-2)^{n-1}\mathbbm{1} + \\& +n(n-1)(k-(n-2))G^-(-3)G^-(-2)^{n-2}\mathbbm{1}.
	\end{flalign*}
	
Next, we claim that
	$$  G^+(2)G^-(-2)^{n}\mathbbm{1} = 2n(k-(n-2))(k-(n-2)+n/2)G^-(-2)^{n-1}\mathbbm{1}. $$
Assume that
	$$  G^+(2)G^-(-2)^{n-1}\mathbbm{1} = 2(n-1)(k-(n-3))(k-(n-3)+(n-1)/2)G^-(-2)^{n-2}\mathbbm{1}. $$
	
We have
	\begin{flalign*}
	G^+(2)G^-(-2)^{n}\mathbbm{1} 
	&= (3J^2(0) + (4k+3)J(0) - (k+3)L(0) + \\& (k+1)(2k+3))G^-(-2)^{n-1}\mathbbm{1} +  G^-(-2)G^+(2)G^-(-2)^{n-1}\mathbbm{1}&\\
	&= 3(n-1)^2G^-(-2)^{n-1}\mathbbm{1} - (n-1)(4k+3)G^-(-2)^{n-1}\mathbbm{1} \\& -(2n-2)(k+3)G^-(-2)^{n-1}\mathbbm{1}+ (k+1)(2k+3)G^-(-2)^{n-1}\mathbbm{1}  + \\& +2(n-1)(k-(n-3))(k-(n-3)+(n-1)/2)G^-(-2)^{n-2}\mathbbm{1} &\\
	&= 2n(k-(n-2))(k-(n-2)+n/2)G^-(-2)^{n-1}\mathbbm{1}.
	\end{flalign*}
	
\medskip
	
Similarly, to show that $(G^+(-1))^{k+2}\mathbbm{1}$ is a singular vector in $\mathcal{W}^k$, we need to show that  $G^+(1)G^-(-2)^{n}\mathbbm{1} =0$ for $n = k+2$. We claim that
	$$  G^-(1)G^+(-1)^{n}\mathbbm{1} = -n(k-(n-2))(2k-(n-4))G^+(-1)^{n-1}\mathbbm{1}. $$
Again we proceed by induction. Assume that	
	$$  G^-(1)G^+(-1)^{n-1}\mathbbm{1} = -(n-1)(k-(n-3))(2k-(n-5))G^+(-1)^{n-2}\mathbbm{1}. $$
	
It holds that
	\begin{flalign*}
	G^-(1)G^+(-1)^{n}\mathbbm{1} 
	&= (-3J^2(0) + (5k+6)J(0) + (k+3)L(0) - \\& - (k+1)(2k+3))G^+(-1)^{n-1}\mathbbm{1} +  G^+(-1)G^-(1)G^+(-1)^{n-1}\mathbbm{1}&\\
	&= ( -3(n-1)^2 + (n-1)(5k+6) +(n-1)(k+3) -(k+1)(2k+3)-&\\ & -(n-1)(k-(n-3))(2k-(n-5)))G^+(-1)^{n-1}\mathbbm{1} &\\
	&= -n(k-(n-2))(2k-(n-4))G^+(-1)^{n-1}\mathbbm{1}.
	\end{flalign*}

\end{proof}

Define the set 
$$  \mathcal S_k= \left\{  (x,y)  \in {\C} ^ 2 \ \vert  h_i(x,y) = 0, \; 1 \leq i \leq k+2 \right\}. $$
\begin{proposition} \label{klas-1}
	Let $k \in \mathbb{Z}$, $k \geq -1$. Then we have:
	\begin{itemize}
	\item[(1)] The set of equivalency classes of irreducible ordinary $ \mathcal W_k$--modules   is contained in the set $$ \{ L(x,y) \ \vert \  (x,y) \in  \mathcal S_k \}. $$
	\item[(2)]  Every irreducible   $ \mathcal W_k$--module in the category $\mathcal O$ is an ordinary module.
	\end{itemize}
\end{proposition}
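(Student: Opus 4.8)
The plan is to distill, from the family of singular vectors produced in Lemma \ref{integral-sing}, a single relation in the Zhu algebra $A_{\overline\omega}(\mathcal W_k)$ that controls the top space of every module at once. Concretely, I would first show that the image of the singular vector $G^+(-1)^{k+2}\mathbbm 1$ under the projection $\mathcal W^k\to A_{\overline\omega}(\mathcal W^k)$ is exactly the power $[G^+]^{k+2}=E^{k+2}$, so that passing to the simple quotient (where this singular vector vanishes) yields the relation $E^{k+2}=0$ in $A_{\overline\omega}(\mathcal W_k)$.

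To identify this Zhu image I would use that, with respect to $\overline\omega$, the field $G^+$ has integral conformal weight $1$; in particular $\mathcal W^k$ is $\mathbb Z_{\ge 0}$--graded, so $V^1=\{0\}$ and we are in the ordinary Zhu algebra. For a weight--$1$ vector the Zhu product reads $[G^+]\ast[b]=[G^+(-1)b]+[G^+(0)b]$. Since $G^+(0)\mathbbm 1=0$ and the modes $G^+(m)$ mutually commute (because $G^+(z)G^+(w)\sim 0$), a straightforward induction gives $[G^+]^{\ast i}=[G^+(-1)^i\mathbbm 1]$ for all $i\ge 0$. Taking $i=k+2$ and using that $G^+(-1)^{k+2}\mathbbm 1$ is singular, hence zero in $\mathcal W_k$, produces $E^{k+2}=0$ in $A_{\overline\omega}(\mathcal W_k)$. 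I would emphasize that this relation is \emph{unconditional}: unlike the relations $[G^+]^2([\overline\omega]+\tfrac19)=0$ and $[G^+]([\overline\omega]+\tfrac12)=0$ at $k=-5/3,-9/4$, there is no factor depending on $Y=[\overline\omega]$, and it is precisely this absence that will forbid infinite-dimensional top spaces.

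With this relation in hand, part (1) follows quickly. Let $L(x,y)$ be an irreducible ordinary $\mathcal W_k$--module. Its top space $M(0)=L(x,y)_{\mathrm{top}}$ is a module for $A_{\overline\omega}(\mathcal W_k)$, it is finite-dimensional (ordinary modules have finite-dimensional $L(0)$--eigenspaces), and by Proposition \ref{l_top}(i) it is spanned by the vectors $E^i v(x,y)$, $i\ge 0$. The relation $E^{k+2}=0$ forces $E^{k+2}v(x,y)=0$, so $n:=\dim M(0)\le k+2$. Proposition \ref{l_top}(ii) then gives $h_n(x,y)=0$ for this $n$ with $1\le n\le k+2$, i.e. $(x,y)\in\mathcal S_k$. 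For part (2), let $L(x,y)$ be irreducible in category $\mathcal O$, so that it is a highest weight module generated by $v(x,y)$. The same two inputs — $M(0)$ is spanned by the $E^i v(x,y)$ and $E^{k+2}=0$ — give $\dim L(x,y)_{\mathrm{top}}\le k+2<\infty$ \emph{without} assuming ordinariness. Since $L(x,y)$ is a quotient of the module induced from the finite-dimensional top, and each $L(0)$--graded piece is spanned by applying the finitely many degree-$m$ monomials in the negative modes of $J,L,G^\pm$ to $M(0)$, a PBW count shows every graded piece is finite-dimensional; hence $L(x,y)$ is ordinary.

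The main obstacle is the opening step: pinning down the Zhu image of the singular vector as the \emph{pure} power $E^{k+2}$, rather than $E^{k+2}$ modified by lower-order corrections. This rests on the two structural facts $G^+(0)\mathbbm 1=0$ and $[G^+(m),G^+(n)]=0$ together with the weight--$1$ form of the Zhu product, and it requires care with the $\overline\omega$--grading conventions so that $G^+$ genuinely carries integral weight $1$ and the computation takes place in the ordinary (associative) Zhu algebra. Once $E^{k+2}=0$ is established, both assertions are immediate consequences of the spanning property and dimension formula in Proposition \ref{l_top}.
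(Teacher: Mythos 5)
Your proposal is correct and follows essentially the same route as the paper: both arguments rest on Lemma \ref{integral-sing}, the vanishing of $G^+(-1)^{k+2}\mathbbm{1}$ in the simple quotient, the resulting relation $(G^+(0))^{k+2}=0$ on top levels, and Proposition \ref{l_top} to bound $\dim L(x,y)_{top}$ by $k+2$ and extract $h_i(x,y)=0$. The only (harmless) difference is that you formalize the middle step as the Zhu-algebra identity $E^{k+2}=[G^+(-1)^{k+2}\mathbbm{1}]=0$, whereas the paper reads off $(G^+(0))^{k+2}v(x,y)=0$ directly from the vanishing of the field $Y(G^+(-1)^{k+2}\mathbbm{1},z)$ on modules.
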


\begin{proof}
	Let $L(x,y)$ be an irreducible $ \mathcal{W}_k$--module with a highest weight vector $v(x,y)$. 
	Since $$(G^+(- 1))^{k+2}\mathbbm{1} = 0$$ in  $\mathcal{W}_k$, it follows that $G^+(z)^{k+2}v(x,y)=0$. Hence on the top level $L(x,y)_{top}$ the following relation holds:
	$$(G^+(0))^{k+2} v_{x,y}= 0.$$  
	
	
	It follows that the space $L(x,y)_{top}$ is $i$-dimensional for certain  $ 1 \le i \le k+2$. So $L(x,y)$ is an ordinary module and from Proposition \ref{l_top} we get that $h_{i} (x,y)  = 0$, for certain $ 1 \le i \le k+2$.
	%
	In this way we have shown that the set of equivalency classes of irreducible $ \mathcal{W}_k$--modules is contained in the set
	 $$    \{ L(x,y) \ \vert \  (x,y) \in  \mathcal S_k \}. $$	
\end{proof}

\begin{conj}
The set   $ \{ L(x,y) \ \vert  \ (x,y) \in  \mathcal S_k \} $	 is the set of all irreducible ordinary $\mathcal W_k$--modules.
\end{conj}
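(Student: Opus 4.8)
The plan is to prove the reverse inclusion to Proposition~\ref{klas-1}. That proposition already shows that every irreducible ordinary $\mathcal W_k$--module occurs among the $L(x,y)$ with $(x,y)\in\mathcal S_k$, so the only thing missing for the conjecture is the converse: for each $(x,y)\in\mathcal S_k$ the irreducible highest weight $\mathcal W^k$--module $L(x,y)$ descends to a genuine (ordinary) $\mathcal W_k$--module. Equivalently, writing $\mathcal W_k=\mathcal W^k/I$ with $I$ the maximal ideal, I must show that $I$ acts by zero on $L(x,y)$.

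First I would reduce the problem to the zero modes on the top space. For an irreducible highest weight module, a singular vector $w$ of $\mathcal W^k$ annihilates $L(x,y)$ if and only if its zero mode $o(w)$ annihilates the top space $L(x,y)_{top}$. By Lemma~\ref{integral-sing} the vectors $G^+(-1)^{k+2}\mathbbm{1}$ and $G^-(-2)^{k+2}\mathbbm{1}$ are singular, and their zero modes act on $L(x,y)_{top}$ through $(G^+(0))^{k+2}$, respectively $(G^-(0))^{k+2}$. Now, if $(x,y)\in\mathcal S_k$, i.e. $h_i(x,y)=0$ for some $1\le i\le k+2$, then (as in the proof of Proposition~\ref{klas-1}) the relation $G^-(0)(G^+(0))^{j}v(x,y)=j\,h_j(x,y)(G^+(0))^{j-1}v(x,y)$ attached to Proposition~\ref{l_top} forces $\dim L(x,y)_{top}\le k+2$ and $(G^+(0))^{k+2}v(x,y)=0$. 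Thus these two explicit singular vectors do annihilate the top space, and hence all of $L(x,y)$; moreover the top space is finite dimensional, so $L(x,y)$ is automatically ordinary.

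The decisive remaining step --- and the main obstacle --- is to show that $G^+(-1)^{k+2}\mathbbm{1}$ and $G^-(-2)^{k+2}\mathbbm{1}$ generate the \emph{entire} maximal ideal $I$ of $\mathcal W^k$ at integer level $k\ge -1$. Only then does the vanishing above guarantee that $L(x,y)$ is a module for the simple quotient $\mathcal W_k$ rather than merely for the possibly larger quotient $\mathcal W^k/\langle G^+(-1)^{k+2}\mathbbm{1},\,G^-(-2)^{k+2}\mathbbm{1}\rangle$. In Zhu-algebra terms this is the statement that $A_{\overline\omega}(\mathcal W_k)$ is exactly the quotient of the Smith algebra $R(g)$, $g(x,y)=-(3x^2-(2k+3)x-(k+3)y)$ of Proposition~\ref{smith-zhu}, by the single further relation coming from the projection of $G^+(-1)^{k+2}\mathbbm{1}$ (a polynomial relation with leading term $E^{k+2}$, where $E=[G^+]$), with no hidden relations appearing in higher conformal weight. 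Controlling all singular vectors of $\mathcal W^k$ in every weight is precisely what is hard: for $k=-5/3$ and $k=-9/4$ this difficulty was bypassed by an explicit free-field realization (a $\mathbb Z_3$--orbifold of the Weyl algebra, respectively the identification $\mathcal W_{-9/4}\cong\mathcal B_4$ inside a lattice VOA), which pins down the simple quotient and exhibits all its modules, but no such realization is known for general $k\ge 1$.

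Accordingly, the realistic route is twofold. For the accessible levels I would verify the conjecture directly: for $k=-1,0$ one uses the lattice-VOA realizations of $\mathcal W_k$ together with the results of \cite{AKMPP-2018} and \cite{ACKL} to construct, for every $(x,y)\in\mathcal S_k$, the module $L(x,y)$ as a subquotient, confirming equality of the two sets. For general integer $k$ the natural strategy would be to produce a free-field (or inverse quantum Hamiltonian reduction, or coset) realization of $\mathcal W_k$ that supplies enough seed modules, and then to propagate these across $\mathcal S_k$ using the spectral-flow operator $\psi$ of Lemma~\ref{L_new}, which sends irreducible $\mathcal W_k$--modules to irreducible $\mathcal W_k$--modules and shifts the highest weight within $\mathcal S_k$. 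Absent such a realization, the generation-of-the-maximal-ideal question remains the genuine obstruction, which is exactly why the statement is recorded here as a conjecture and its general proof deferred to future work.
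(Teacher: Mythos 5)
The statement you are asked about is recorded in the paper as a conjecture and is not proved there in general: the paper establishes it only for $k=-1$ and $k=0$, and does so exactly by the explicit realizations you describe (the Heisenberg identification $\mathcal W_{-1}\cong M(1)$ from \cite{AKMPP-2018}, and the lattice/fermionic realization of $\mathcal W_0$ from \cite{ACKL}). Your diagnosis of the remaining obstruction --- that Proposition \ref{klas-1} gives only the inclusion of the set of irreducible ordinary modules into $\{L(x,y)\mid (x,y)\in\mathcal S_k\}$, and that the converse requires either showing the singular vectors of Lemma \ref{integral-sing} generate the maximal ideal or constructing each $L(x,y)$ via a realization --- is accurate and coincides with the paper's own stance, so your proposal matches the paper's approach to the extent that a proof exists.
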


In what follows, we will prove this conjecture for $k=-1, 0$.

\subsection{The vertex algebra $\mathcal W_{-1}$}

In \cite{AKMPP-2018} it was shown that the  Bershadsky-Polyakov algebra $\mathcal{W}_k$ for $k=-1$ is isomorphic to the Heisenberg vertex algebra $M(1)$. We will use this fact to prove the following:

\begin{theorem} The set
	$$   \{ L(x,y) \ \vert \ (x,y) \in  \mathcal S_{-1} \} $$
	 is the set of all irreducible $\mathcal{W}_{-1}$--modules.
\end{theorem}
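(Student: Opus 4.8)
The plan is to exploit the isomorphism $\mathcal W_{-1}\cong M(1)$ established in \cite{AKMPP-2018}, which reduces the problem to the classification of irreducible modules for the rank one Heisenberg vertex algebra. First I would record that, by Lemma \ref{integral-sing} with $n=k+2=1$, the vectors $G^+(-1)\mathbbm{1}=G^+$ and $G^-(-2)\mathbbm{1}=G^-$ are singular in $\mathcal W^{k}$ for $k=-1$, hence vanish in the simple quotient $\mathcal W_{-1}$. Thus $\mathcal W_{-1}$ is strongly generated by $J$ and $T$ alone, with $[J_m,J_n]=\tfrac13 m\delta_{m+n,0}$ (since $\tfrac{2k+3}{3}=\tfrac13$) and central charge $\overline c$-analogue $c_{-1}=1$; this is exactly the structure of $M(1)=\langle J\rangle$. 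Moreover the defining OPE shows $J$ is primary of conformal weight $1$, so the conformal vector is forced to be the Sugawara vector $\omega=\tfrac32 J_{-1}^2\mathbbm{1}$ of $M(1)$ (any background charge term $\mu J_{-2}\mathbbm{1}$ would spoil the primarity of $J$, forcing $\mu=0$).

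Next I would classify the irreducible $\mathcal W_{-1}$-modules as irreducible $M(1)$-modules. Since $[J_0,J_n]=0$ for all $n$, the element $J_0$ is central in the Heisenberg Lie algebra, so on any irreducible module it acts as a scalar $x\in\C$ by Schur's Lemma. A standard Stone--von Neumann argument then shows that the irreducible $M(1)$-modules are precisely the Fock modules $\mathcal F_x$, $x\in\C$, each generated by a highest weight vector $v_x$ with $J_0 v_x=x v_x$ and $J_n v_x=0$ for $n>0$. Each $\mathcal F_x$ is $\Z_{\ge 0}$-graded with finite dimensional weight spaces, hence is an ordinary module and lies in the category $\mathcal O$; in particular there are no exotic irreducible modules to exclude.

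Then I would compute the highest weight of $\mathcal F_x$. Using the Sugawara expression, $L_0 v_x=\tfrac32 J_0^2 v_x=\tfrac32 x^2 v_x$, so $\mathcal F_x=L_{x,\,3x^2/2}$; passing to the shifted grading via $L(0)=L_0-\tfrac12 J_0$ gives $\mathcal F_x=L(x,y)$ with $y=\tfrac32 x^2-\tfrac12 x$. A direct check gives $h_1(x,\tfrac32 x^2-\tfrac12 x)=g(x,\tfrac32 x^2-\tfrac12 x)=0$, so $(x,y)\in\mathcal S_{-1}$. Conversely, solving $h_1(x,y)=0$ for $y$ yields exactly $y=\tfrac32 x^2-\tfrac12 x$, so every point of $\mathcal S_{-1}$ has this form and is realized by $\mathcal F_x$. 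Combining the two directions shows that $\{L(x,y)\mid (x,y)\in\mathcal S_{-1}\}$ is precisely the set of irreducible $\mathcal W_{-1}$-modules, which is moreover consistent with the inclusion already provided by Proposition \ref{klas-1}.

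The main obstacle is not any single hard computation but making the identifications precise: verifying that under $\mathcal W_{-1}\cong M(1)$ the field $J$ and the conformal vector $\omega$ correspond to the Heisenberg generator and its Sugawara vector (so that the conformal weights computed on Fock modules are the genuine $\mathcal W_{-1}$-weights), and justifying that the Fock modules exhaust all irreducible modules, i.e.\ that there is no irreducible $M(1)$-module on which $J_0$ acts non-semisimply or which fails to be highest weight. Both points are settled by the centrality of $J_0$ and the standard representation theory of the Heisenberg algebra at nonzero level.
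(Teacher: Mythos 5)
Your proposal is correct and follows essentially the same route as the paper: use the vanishing of $G^{\pm}$ in the simple quotient (the $n=1$ case of Lemma \ref{integral-sing}) to identify $\mathcal W_{-1}$ with the Heisenberg algebra $M(1)$ with Sugawara vector $\omega=\tfrac{3}{2}:J^2:$, classify its irreducible modules as Fock modules, and match the resulting $L(0)$-eigenvalue $y=\tfrac{3}{2}x^2-\tfrac{1}{2}x$ with the zero set of $h_1$ defining $\mathcal S_{-1}$. The extra justifications you supply (primarity forcing the Sugawara form, Stone--von Neumann exhausting the irreducibles) only make explicit what the paper takes for granted.
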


\begin{proof}
For $k =-1$ the generators $G^{\pm}$  of $\mathcal{W}^k$ belong to the maximal ideal, hence $G^{\pm} = 0$ in the simple quotient $\mathcal{W}_k$ and $ \mathcal{W}_{-1} \cong M(1). $

Also, for $k=-1$ there is a conformal embedding of the  Heisenberg vertex algebra into $\mathcal{W}_k$ (cf. \cite{AKMPP-2018}). The (original) Virasoro vector is given by   $\omega =\frac{3}{2} :J^2:$, 
and the shifted Virasoro is
$$  \overline \omega  = \omega + \frac{1}{2} DJ =  \frac{3}{2}J(-1) ^2 \mathbbm{1} + \frac{1}{2} J(-2)\mathbbm{1}. $$

Since all irreducible modules for the Heisenberg vertex algebra $M(1)$ are of the form $M(1, x)$ for some $x \in {\C}$, where $J(0)$ acts on $M(1,x)$ as $x \mbox{Id}$, then $L(0)$ acts on the highest weight vector of $M(1,x)$ as
$$ y = \frac{3}{2} x^2 - \frac{1}{2} x.$$

This shows that the highest weights of irreducible $\mathcal W_{-1} = M(1)$--modules coincide with the zeroes of the polynomial $h_1(x,y) = -3x^2 + (2k+3)x + (k+3)y$ for $k=-1$.
\end{proof}

\subsection{The vertex algebra $\mathcal W_0$ and its modules.}

Let $\mathcal A (1) $ be the vertex algebra of symplectic fermions generated by odd fields  $b$ and $c$, with the Virasoro vector  $\omega_{\mathcal A(1)}  = :b c: $ of central charge $c=-2$.

Let $F$ be the Clifford vertex algebra generated by odd fields $\Psi^{+}$ and $\Psi^{-}$ and let $M(1)$ be the Heisenberg vertex subalgebra of  $F$ generated by $\alpha:= :\Psi^+ \Psi^- :$. Vertex algebras  $M(1) $ and $F$ have a Virasoro vector $\omega_F = \frac{1}{2} :\alpha \alpha:$  of central charge $c=1$. 

We will need the following result from \cite{ACKL} :
\begin{theorem} \cite{ACKL}  \label{real-ferm} 
There exists a non-trivial homomorphism of vertex algebras
	\begin{align*}
	\Phi: \mathcal{W}_0 &\rightarrow  F\otimes \mathcal A(1),  &\\
	J&\mapsto   :\Psi^+ \Psi^- :  &\\
	T & \mapsto  \omega_{F} + \omega_{\mathcal A(1)} &\\
	G^+ & \mapsto  \sqrt{3} : \Psi^+  b: &\\
	G^- &\mapsto   \sqrt{3} : \Psi^-  c:. &
	\end{align*}
	\end{theorem}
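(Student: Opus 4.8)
The plan is to build the map out of the universal algebra $\mathcal W^0$ and then descend to the simple quotient. Since $\mathcal W^k$ is freely generated by the fields $T,J,G^\pm$ subject only to the OPE relations of its defining Definition, a homomorphism out of $\mathcal W^0$ is completely determined by a choice of four fields in the target that reproduce those relations at $k=0$. Accordingly I would set $\widetilde\Phi(J)= :\Psi^+\Psi^-:$, $\widetilde\Phi(T)=\omega_F+\omega_{\mathcal A(1)}$, $\widetilde\Phi(G^+)=\sqrt 3\,:\Psi^+b:$, $\widetilde\Phi(G^-)=\sqrt 3\,:\Psi^-c:$ inside $F\otimes\mathcal A(1)$, and verify that these satisfy the defining $\mathcal W^0$-relations. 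This produces a homomorphism $\widetilde\Phi:\mathcal W^0\to F\otimes\mathcal A(1)$ which is nontrivial, since $\widetilde\Phi(J)=\alpha\neq 0$.

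All OPE checks reduce to free-field (Wick) calculus, as $F$ and $\mathcal A(1)$ are controlled by the single contractions $\Psi^+(z)\Psi^-(w)\sim (z-w)^{-1}$ and $b(z)c(w)\sim (z-w)^{-2}$. Most relations are then immediate. The field $\alpha= :\Psi^+\Psi^-:$ is the standard Heisenberg field of $F$, so $\widetilde\Phi(J)\widetilde\Phi(J)\sim (z-w)^{-2}$, matching $\tfrac{2k+3}{3}=1$; the charges ($+1$ for $\Psi^+$, $-1$ for $\Psi^-$) give $\widetilde\Phi(J)\widetilde\Phi(G^\pm)\sim\pm\widetilde\Phi(G^\pm)(z-w)^{-1}$; the conformal weights ($\tfrac12$ for $\Psi^\pm$ with respect to $\omega_F$, and $1$ for $b,c$ with respect to $\omega_{\mathcal A(1)}$) yield the correct $\widetilde\Phi(T)\widetilde\Phi(J)$ and $\widetilde\Phi(T)\widetilde\Phi(G^\pm)$ brackets; and the total central charge is $c_F+c_{\mathcal A(1)}=1+(-2)=-1=c_0$, which settles the $\widetilde\Phi(T)\widetilde\Phi(T)$ relation.

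I expect the $\widetilde\Phi(G^+)\widetilde\Phi(G^-)$ OPE to be the main obstacle, since it carries poles of orders three, two and one and couples two composite fields. I would expand $3\,:\Psi^+(z)b(z):\,:\Psi^-(w)c(w):$ by Wick's theorem, tracking fermionic signs throughout: the double contraction yields the order-$3$ pole with coefficient $3=(k+1)(2k+3)$; contracting only $\Psi^+\Psi^-$ (leaving $:bc:$) and only $bc$ (leaving $:\Psi^+\Psi^-:$) produces the order-$2$ pole $3\widetilde\Phi(J)$ together with part of the order-$1$ pole; and Taylor-expanding the surviving normal-ordered products about $w$ must assemble the order-$1$ pole into $3:\widetilde\Phi(J)\widetilde\Phi(J): + \tfrac32 D\widetilde\Phi(J) - 3\widetilde\Phi(T)$. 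The delicate point is this last, single-pole term: one must combine the $\tfrac12:\alpha\alpha:$ inside $\omega_F$ and the $:bc:$ inside $\omega_{\mathcal A(1)}$ with the derivative contributions so that $3:\alpha\alpha:-3\omega_F=\tfrac32:\alpha\alpha:$ while the leftover equals $-3:bc:$, with every sign and Taylor coefficient matching.

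It remains to descend $\widetilde\Phi$ to $\mathcal W_0$. Since $\widetilde\Phi\neq 0$, its kernel is a proper ideal and hence contained in the maximal ideal $I$ of $\mathcal W^0$; to obtain a map out of $\mathcal W_0=\mathcal W^0/I$ one needs the reverse inclusion $I\subseteq\ker\widetilde\Phi$. By Lemma \ref{integral-sing} with $n=k+2=2$, the maximal ideal is generated by the singular vectors $G^+(-1)^2\mathbbm 1$ and $G^-(-2)^2\mathbbm 1$, so it suffices to check that these are annihilated. Their images are $3:\Psi^+b\,\Psi^+b:$ and $3:\Psi^-c\,\Psi^-c:$, and both vanish because $\Psi^\pm,b,c$ are odd: the contractions $\Psi^+(z)\Psi^+(w)\sim 0$ and $b(z)b(w)\sim 0$ leave no surviving Wick terms, so that $:\Psi^+\Psi^+:=\;:bb:=0$. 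Hence $I\subseteq\ker\widetilde\Phi$, and $\widetilde\Phi$ descends to a nontrivial homomorphism $\Phi:\mathcal W_0\to F\otimes\mathcal A(1)$, as claimed.
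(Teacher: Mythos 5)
The paper does not prove this statement at all --- it is imported verbatim from \cite{ACKL} --- so there is no internal proof to compare against; your free-field verification is the natural way to establish it, and the OPE checks you describe (charges, conformal weights, central charge $1+(-2)=-1=c_0$, and the Wick expansion of $G^+(z)G^-(w)$ using the quadratic identity $:D\Psi^+\,\Psi^-:=\tfrac12(:\alpha\alpha:+D\alpha)$) are all correct in substance. The genuine gap is in the descent step. You assert that ``the maximal ideal is generated by the singular vectors $G^+(-1)^{2}\mathbbm 1$ and $G^-(-2)^{2}\mathbbm 1$'' and cite Lemma \ref{integral-sing} for this, but that lemma only shows these vectors are \emph{singular}, i.e.\ that the ideal they generate is contained in the maximal ideal $I$ of $\mathcal W^0$; it says nothing about the reverse inclusion, and nowhere in the paper is it established that $I$ is generated by singular vectors (Proposition \ref{klas-1} uses only the automatic direction, that the singular vectors vanish in $\mathcal W_0$). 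Consequently, checking that $\widetilde\Phi$ kills these two vectors does not give $I\subseteq\ker\widetilde\Phi$, which is exactly what you need for $\widetilde\Phi$ to factor through the simple quotient $\mathcal W_0=\mathcal W^0/I$.

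Since $\ker\widetilde\Phi$ is a proper ideal it is automatically contained in $I$, so what must be shown is the equality $\ker\widetilde\Phi=I$, equivalently that the image $\widetilde\Phi(\mathcal W^0)\subseteq F\otimes\mathcal A(1)$ is a \emph{simple} vertex algebra. The paper's own template for this kind of argument is the proof that $\mathcal B_4\cong\mathcal W_{-9/4}$: one assumes the image is not simple, deduces that it has an irreducible subquotient $L(x,y)$ whose highest weight is constrained both by the classification (here, $(x,y)\in\mathcal S_0$ via Proposition \ref{klas-1}) and by the weights actually occurring in $F\otimes\mathcal A(1)$, and derives a contradiction. Replacing your generation claim by such a simplicity argument (or by an explicit identification of the image with a known simple algebra) would close the gap; as written, the final paragraph does not follow from the results you invoke.
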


\medskip

Let  $V_L =M(1) \otimes \mathbb{C} [ L] $ be the lattice vertex algebra associated with the lattice  $L= \mathbb{Z}\alpha_1 + \mathbb{Z}\alpha_2$, where
$$\langle \alpha_i, \alpha_j\rangle = \delta_{i,j}, \quad i,j =1,2.$$

We consider the subalgebra $V[D]  = M(1)\otimes  \mathbb{C}[D]$  of $V_L$, where $D= {\Z} (\alpha_1+ \alpha_2)$.

For every $x \in \mathbb{C}$, $i=0,1$, $$V[D- i \alpha_1 - x (\alpha_1-\alpha_2)] = V[D]. e^{- i\alpha_1 - x (\alpha_1-\alpha_2)}$$ is an irreducible $V[D]$--module.

\begin{theorem}\label{class-k0}
\begin{itemize}
    \item[(1)] The simple vertex algebra $\mathcal{W}_0$ can be realized as a vertex subalgebra of $V[D]$ generated by vectors 
    \begin{align*}
	J&\mapsto   \alpha_2(-1)   &\\
	L & \mapsto  \frac{1}{2} \left( \alpha_1 (-1) ^2 - \alpha_1(-2) + \alpha_2(-1) ^2 + \alpha_2(-2)  \right) &\\
	G^+ & \mapsto \sqrt{3} e^{\alpha_1 + \alpha_2} &\\
	G^- &\mapsto  -\sqrt{3} \alpha_1(-1) e^{-\alpha_1-\alpha_2}. & 
	\end{align*}
	\item[(2)]  $\mathcal{W}_0$  has two families of  irreducible  highest weight modules
	$$ \{ L(x, x^2 + (i-1) x) \ \vert \ x \in {\C}, \ i = 0,1 \}$$ which are realized    as quotients of $$ U_{i} (x) =\mathcal{W}_0. e^{- i\alpha_1 - x (\alpha_1-\alpha_2)}.   $$
	 Morreover, we have:
	 \begin{itemize}
	 \item[(i)] $\dim   L(x, x^2 - x)  _{top} = 1$ for all $x \in {\C}$.
	 \item[(ii)]  $\dim  L(x, x^2 )  _{top} =2 $ for all $x \in {\C}\setminus \{0\}$.
	 \item[(iii)] $U_1 (0) $ is an indecomposable $\mathcal{W}_0$--module. In particular,  $ U_1 (0)_ {top}$ is an indecomposable  $2$-dimensional module for $ A(\mathcal W_0)$.
 	 \end{itemize}
\end{itemize}
\end{theorem}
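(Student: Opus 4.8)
To prove (1) the plan is to transport the realization of $\mathcal{W}_0$ inside $F\otimes\mathcal{A}(1)$ from Theorem \ref{real-ferm} into the lattice setting by the boson--fermion correspondence. Since $\mathcal{W}_0$ is simple, the nontrivial homomorphism of Theorem \ref{real-ferm} is injective, so $\mathcal{W}_0\hookrightarrow F\otimes\mathcal{A}(1)$. I would bosonize the Clifford algebra $F$ by setting $\Psi^{\pm}=e^{\pm\alpha_1}$, so that $:\Psi^+\Psi^-:=\alpha_1(-1)$ and $\omega_F=\tfrac12\alpha_1(-1)^2$, and realize the symplectic fermions $\mathcal{A}(1)$ via their conformal embedding into a second Clifford algebra (Section \ref{sf}, with $b=-D\widetilde\Psi^+$, $c=\widetilde\Psi^-$ and $\omega_{\mathcal{A}(1)}=\omega_{c=-2}$), which is in turn bosonized by $\widetilde\Psi^{\pm}=e^{\pm\alpha_2}$ using the background-charge Virasoro $\tfrac12\alpha_2(-1)^2+\tfrac12\alpha_2(-2)$. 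The composite embeds $F\otimes\mathcal{A}(1)$ into the rank-two lattice vertex algebra $V_L$, and since $J=\alpha_2(-1)$ and $G^{\pm}$ are supported on $\pm(\alpha_1+\alpha_2)$, the image lies in $V[D]$. The remaining step is to evaluate the images of $J,T,G^{\pm}$ and, after passing to the shifted Virasoro $\overline\omega=\omega+\tfrac12 DJ$, to match them with the stated formulas; injectivity of both maps then yields that the generated subalgebra is the simple $\mathcal{W}_0$. As a consistency check one verifies that the level-$0$ singular vectors $(G^+(-1))^2\mathbbm{1}$ and $(G^-(-2))^2\mathbbm{1}$ of Lemma \ref{integral-sing} (here $n=k+2=2$) vanish, e.g.\ because $Y(e^{\alpha_1+\alpha_2},z)e^{\alpha_1+\alpha_2}$ has lowest order $z^{2}$.

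For (2), write $\mu=-i\alpha_1-x(\alpha_1-\alpha_2)=-(i+x)\alpha_1+x\alpha_2$ and analyse the cyclic module $U_i(x)=\mathcal{W}_0.e^{\mu}$, a submodule of $V[D-i\alpha_1-x(\alpha_1-\alpha_2)]$. First I would compute the weight of $e^{\mu}$: $J(0)=\alpha_2(0)$ acts by $\langle\alpha_2,\mu\rangle=x$, while a standard lattice computation gives $L(0)e^{\mu}=\bigl(\tfrac12\langle\mu,\mu\rangle-\langle\rho,\mu\rangle\bigr)e^{\mu}$ with $\rho=-\tfrac12\alpha_1+\tfrac12\alpha_2$, which equals $x^2+(i-1)x+\tfrac12 i(i-1)$; the last summand vanishes exactly for $i\in\{0,1\}$, so the weight is $(x,x^2+(i-1)x)$. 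Then I would check that $e^{\mu}$ is a highest weight vector in the sense of Proposition \ref{rep_exs}: $J(n)e^{\mu}=L(n)e^{\mu}=0$ for $n>0$ holds automatically for lattice modules, and a leading-order count --- $Y(G^+,z)e^{\mu}$ begins at $z^{-i}$ and $Y(G^-,z)e^{\mu}$ at $z^{i-1}$ --- gives $G^+(n)e^{\mu}=0$ for $n\ge i$ and $G^-(n)e^{\mu}=0$ for $n\ge -i$, which covers the ranges required. Hence $U_i(x)$ is a highest weight module whose irreducible quotient is $L(x,x^2+(i-1)x)$.

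The structure of the top level follows from the same count together with the Smith-algebra relations of Proposition \ref{smith-zhu}, now with $g(x,y)=-(3x^2-3x-3y)$. For $i=0$ the series $Y(G^+,z)e^{\mu}$ begins at $z^{0}$, so $G^+(0)e^{\mu}=0$ and the top level is one-dimensional, giving (i). For $i=1$ it begins at $z^{-1}$, so $w:=G^+(0)e^{\mu}\ne 0$, while applying $G^+(0)$ once more lands in a series beginning at $z^{1}$ and hence $(G^+(0))^2e^{\mu}=0$; thus $U_1(x)_{\mathrm{top}}=\mathrm{span}\{e^{\mu},w\}$ is two-dimensional, with $X=[J]$ acting by the distinct eigenvalues $x$ and $x+1$. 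Writing $E=[G^+]$, $F=-[G^-]$, and using $Fe^{\mu}=0$ together with $EF-FE=g(X,Y)$, one finds $Ee^{\mu}=w$ and $Fw=-g(x,x^2)e^{\mu}=-3x\,e^{\mu}$. For $x\ne 0$ this makes the top level an irreducible $A(\mathcal{W}_0)$-module, so $\dim L(x,x^2)_{\mathrm{top}}=2$, which is (ii). For $x=0$ we instead get $Fw=0=Ew$, so $\mathbb{C}w$ is a submodule admitting no complement (as $Ee^{\mu}=w$); hence $U_1(0)_{\mathrm{top}}$ is a $2$-dimensional indecomposable $A(\mathcal{W}_0)$-module, and since a decomposition of $U_1(0)$ would induce one of its top, $U_1(0)$ is itself indecomposable, which is (iii).

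I expect the main obstacle to be part (1): the careful bookkeeping of cocycles, of the derivative terms coming from $b=-D\widetilde\Psi^+$, and of the background charge, so that the bosonized generators match the stated vectors exactly --- in particular confirming that the shift $\tfrac12 DJ$ is responsible for the asymmetric linear terms $-\tfrac12\alpha_1(-2)+\tfrac12\alpha_2(-2)$ in $L$. By contrast, once the highest weight property in (2) is in place, the remaining assertions reduce to the short Smith-algebra computation above, the only subtlety being the dichotomy $x=0$ versus $x\ne 0$ in the value of $g(x,x^2)$.
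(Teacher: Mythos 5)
Your proposal follows essentially the same route as the paper: part (1) by transporting the fermionic realization of Theorem \ref{real-ferm} into the lattice $V[D]$, and part (2) by exhibiting $e^{-i\alpha_1-x(\alpha_1-\alpha_2)}$ as a highest weight vector and analysing the two-dimensional top level of $U_1(x)$. Part (2) is correct; your only real deviation is that you obtain $G^-(0)w=-Fw=3x\,e^{\mu}$ from the Smith relation $EF-FE=g(X,Y)$ together with $Fe^{\mu}=0$, whereas the paper computes $G^{-}(n)e^{\alpha_2-x(\alpha_1-\alpha_2)}=\sqrt{3}x\delta_{n,0}e^{-\alpha_1-x(\alpha_1-\alpha_2)}$ directly from the lattice vertex operator; both give the same dichotomy at $x=0$, and your reduction of indecomposability of $U_1(0)$ to indecomposability of its top is sound since $U_1(0)$ is generated by its top. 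In part (1), however, your specific bosonization needs to be corrected before it matches the stated generators. First, your labels are inconsistent: you set $\Psi^{\pm}=e^{\pm\alpha_1}$, which forces $J=:\Psi^+\Psi^-:=\alpha_1(-1)$, yet you later use $J=\alpha_2(-1)$ as in the statement; the paper puts the Clifford algebra on $\alpha_2$ and the symplectic fermions on $\alpha_1$. Second, and more substantively, routing $\mathcal A(1)$ through the $c=-2$ Clifford algebra of Section \ref{sf} (where $b=-D\widetilde\Psi^+$, $\widetilde\Psi^+$ of weight $0$) produces $b=-\alpha(-1)e^{\alpha}$ and $c=e^{-\alpha}$, so the derivative factor lands on $G^+=\sqrt{3}:\Psi^+b:$ rather than on $G^-$; the paper instead bosonizes $\mathcal A(1)$ directly as $b=e^{\alpha_1}$, $c=-\alpha_1(-1)e^{-\alpha_1}$ with the opposite background charge $\omega_{\mathcal A(1)}=\tfrac12(\alpha_1(-1)^2-\alpha_1(-2))$, which is what yields exactly the displayed $G^{\pm}$ and the asymmetric linear terms in $L$ (only $+\tfrac12\alpha_2(-2)$ comes from the shift $\tfrac12 DJ$; the $-\tfrac12\alpha_1(-2)$ comes from $\omega_{\mathcal A(1)}$ itself). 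Your version is related to the stated one by a harmless change of convention, but as written it does not literally produce the generators in the theorem, and the OPE verification (the paper checks $G^+(n)G^-$ for $n=0,1,2$) still has to be carried out; you correctly flag this as the remaining work.
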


\begin{proof}
The Clifford vertex algebra $F$ can be embedded into the vertex algebra $V_L$ so that $$\Psi^+ = e^{\alpha_2}, \ \Psi^{-} = e^{-\alpha_2}, \ \omega_F = \frac{1}{2}  \alpha_2 (-1) ^2.$$
Symplectic fermions $\mathcal A(1)$ are also a subalgebra of $V_L$ such that
	$$ b= e^{\alpha_1}, \ c= - \alpha_1(-1) e^{-\alpha_1}, \ \omega_{\mathcal A(1)} = \frac{1}{2} (\alpha_1 (-1) ^2 - \alpha_1(-2) ).   $$
Using the fermionic realization from Theorem \ref{real-ferm} we can now obtain an explicit bosonic realization of $\mathcal{W}_0$.
	
Denote $$ \alpha_1 + \alpha_2  = \gamma, \quad -\alpha_1 - \alpha_2  =  \delta.$$ 

\medskip
Then $\langle \gamma, \delta \rangle = -2$ and from direct calculations we get
	\begin{flalign*}
	G^+(2) G^- 
	&= -3e^{\alpha_1 + \alpha_2}_2 \alpha_1(-1) e^{-\alpha_1 - \alpha_2} &\\
	&= 3 \mathbbm{1} = (k+1)(2k+3)\mathbbm{1},
	\end{flalign*}
	\begin{flalign*}
	G^+(1) G^- 
	&= -3e^{\alpha_1 + \alpha_2}_1 \alpha_1(-1) e^{-\alpha_1 - \alpha_2} &\\
	&= 3 \alpha_2 (-1) = 3(k+1)J,
	\end{flalign*}
	\begin{flalign*}
	G^+(0)G^- 
	&= -3e^{\alpha_1 + \alpha_2}_0 \alpha_1(-1) e^{-\alpha_1 - \alpha_2} &\\
	&= 3J(-1)^2 + 3J(-2) - 3L(-2) = 3J(-1)^2 +(2k+ 3)J(-2) - (k+3)L(-2).
	\end{flalign*}

This concludes the proof of (1).
	
\bigskip	
	
Using  the formula 
\begin{equation} \label{eq: nti produkt}
(\alpha_1(-1)e^{-\alpha_1-\alpha_2})_{n} = \sum_{i=0}^{\infty}(\alpha_1(-i-1)e^{-\alpha_1-\alpha_2}_{n+i}+ e^{-\alpha_1-\alpha_2}_{n-i-1}\alpha_1(i))    
\end{equation} we obtain  for $n \ge 0$, ($i = 0,1$):
\begin{flalign*} 
    G^{-} (n) e^{- i\alpha_1 - x (\alpha_1-\alpha_2)} &= -\sqrt{3} (\alpha_1(-1)e^{-\alpha_1-\alpha_2})_{n+1}  e^{-i\alpha_1 - x (\alpha_1-\alpha_2)} &\\
	 &= -\sqrt{3}\delta_{n,0}e^{-\alpha_1-\alpha_2}_0\alpha_1(0)e^{-i\alpha_1 - x (\alpha_1-\alpha_2)}&\\
	  &= -\sqrt{3}(-i-x)\delta_{n,0}e^{-\alpha_1-\alpha_2}_0e^{-i\alpha_1 - x (\alpha_1-\alpha_2)}&\\
	&= 0,  & \\
	G^{+} (n) e^{- \alpha_1 - x (\alpha_1-\alpha_2)} &= \sqrt{3} e^{\alpha_1+\alpha_2}_n  e^{- \alpha_1  - x (\alpha_1-\alpha_2)}  & \\
	&= \sqrt{3} \delta_{n,0}  e^{ \alpha_2  - x (\alpha_1-\alpha_2)},  & \\
	G^{+} (n) e^{ - x (\alpha_1-\alpha_2)} &= \sqrt{3} e^{\alpha_1+\alpha_2}_n  e^{- x (\alpha_1-\alpha_2)} = 0, & \\
	J(n) e^{- i\alpha_1 - x (\alpha_1-\alpha_2)} &= \alpha_2(n) e^{- i\alpha_1  - x (\alpha_1-\alpha_2)}  & \\
	&= x \delta_{n,0}  e^{- i\alpha_1  - x (\alpha_1-\alpha_2)},  & \\
	L(n) e^{- i\alpha_1 - x (\alpha_1-\alpha_2)} &= \frac{1}{2} \left( \alpha_1 (-1) ^2 - \alpha_1(-2) + \alpha_2(-1) ^2 + \alpha_2(-2)  \right)_n e^{ - i\alpha_1  - x (\alpha_1-\alpha_2)}  &\\
	&= (x^2 + (i-1)x) \delta_{n,0}  e^{ - i\alpha_1  - x (\alpha_1-\alpha_2)}.  &
	\end{flalign*}

This implies that  $e^{- i\alpha_1 - x (\alpha_1-\alpha_2)}$, $i = 0,1$ are highest weight vectors for $\mathcal{W}_0$ and that the highest weight is $ (x, x^2 + (i-1) x)$.  
Next, we claim that
\begin{equation} \label{eq: indecomp}
  G^{-} (n) e^{ \alpha_2 - x (\alpha_1-\alpha_2)} =  \sqrt{3} x \delta_{n,0}  e^{-\alpha_1 - x (\alpha_1-\alpha_2)}.  
\end{equation}

Using the formula (\ref{eq: nti produkt}) again, we have 
\begin{flalign*}
    G^{-} (n) e^{ \alpha_2 - x (\alpha_1-\alpha_2)} &= -\sqrt{3} (\alpha_1(-1)e^{-\alpha_1-\alpha_2})_{n+1}  e^{\alpha_2 - x (\alpha_1-\alpha_2)} &\\
    &= -\sqrt{3}\delta_{n,0}e^{-\alpha_1-\alpha_2}_0\alpha_1(0)e^{\alpha_2 - x (\alpha_1-\alpha_2)}&\\
    &= -\sqrt{3}\delta_{n,0}(-x)e^{-\alpha_1-\alpha_2}_0e^{\alpha_2 - x (\alpha_1-\alpha_2)}&\\
    &= \sqrt{3}x\delta_{n,0}e^{-\alpha_1 - x (\alpha_1-\alpha_2)}. &
\end{flalign*}

From the formula (\ref{eq: indecomp}), and the fact that $h_2(x,y)=0$, we see that $ U_1 (x)_ {top}$ is an indecomposable  $2$-dimensional $A(\mathcal W_0)$--module when $x=0$.
	
Hence $\mathcal{W}_0$ has two families of highest weight modules $U_i(x)$ with highest weights $ (x, x^2 + (i-1) x)$, $i = 0,1$. In particular, their irreducible quotients $L(x, x^2 + (i-1) x)$ are also modules for $\mathcal{W}_0$.
\end{proof}

\appendix

\section{Proof of Lemma  \ref{lema G}}

\begin{lemma}
Let $J = -\frac{1}{3}a_{-1}^{+}a_{-1}^{-}\mathbbm{1}, \; 
	\omega = \frac{1}{2}\left(a_{-2}^{-}a_{-1}^{+}-a_{-2}^{+}a_{-1}^{-}\right)\mathbbm{1}, \; G^+ = \frac{1}{3}\left(a_{-1}^{+}\right)^{3}\mathbbm{1},\;
	G^- = \frac{1}{9}\left(a_{-1}^{-}\right)^{3}\mathbbm{1}.$ Then it holds that:
	\begin{align*} 
	G_{2}^{+}G^{-} & = \frac{2}{9}\mathbbm{1} &\\
	G_{1}^{+}G^{-} & =  -2J &\\
	G_{0}^{+}G^{-} & =  3J_{-1}^{2}-DJ-\frac{4}{3}\omega. &
	\end{align*}
\end{lemma}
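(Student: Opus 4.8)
The plan is to pass to the free-field side and read everything off from a single operator product expansion. Since $G^+ = \tfrac13 (a^+_{-1})^3\mathbbm{1}$ and $G^- = \tfrac19(a^-_{-1})^3\mathbbm{1}$, the corresponding fields are the normally ordered cubes $Y(G^+,z) = \tfrac13\,{:}a^+(z)^3{:}$ and $Y(G^-,w) = \tfrac19\,{:}a^-(w)^3{:}$. The only nonzero contraction among the generators is $a^+(z)a^-(w) \sim (z-w)^{-1}$, which follows from $[a^+_r,a^-_s] = \delta_{r+s+1,0}$ at level $K=1$, while the $a^+$--$a^+$ and $a^-$--$a^-$ contractions vanish. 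Hence Wick's theorem applied to $Y(G^+,z)Y(G^-,w)$ reduces to summing over the number $p\in\{1,2,3\}$ of contracted $a^+$--$a^-$ pairs, each configuration carrying the combinatorial weight $\binom{3}{p}^2 p!$ and a factor $(z-w)^{-p}$, multiplied by the normal ordering of the $3-p$ surviving copies of $a^+(z)$ and of $a^-(w)$.

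Carrying this out (with overall constant $\tfrac13\cdot\tfrac19 = \tfrac1{27}$ and $\binom33^2 3!=6$, $\binom32^2 2!=18$, $\binom31^2 1!=9$) I would obtain the singular part
$$ G^+(z)G^-(w) \sim \frac{2}{9}(z-w)^{-3} + \frac{2}{3}(z-w)^{-2}\,{:}a^+(z)a^-(w){:} + \frac{1}{3}(z-w)^{-1}\,{:}a^+(z)^2 a^-(w)^2{:}. $$
The products $G^+_n G^-$ are then the coefficients of $(z-w)^{-n-1}$ once the $z$-dependent normal orderings are Taylor expanded about $z=w$. The $(z-w)^{-3}$ term is already constant, giving $G^+_2 G^- = \tfrac29\mathbbm{1}$. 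The $(z-w)^{-2}$ term contributes its value at $z=w$, namely $\tfrac23\,{:}a^+a^-{:} = \tfrac23 a^+_{-1}a^-_{-1}\mathbbm{1}$, and since $a^+_{-1}a^-_{-1}\mathbbm{1} = -3J$ this equals $G^+_1 G^- = -2J$. Finally $G^+_0 G^-$ collects the order-$(z-w)$ term of the second summand, $\tfrac23\,{:}(\partial a^+)a^-{:} = \tfrac23 a^+_{-2}a^-_{-1}\mathbbm{1}$, together with the leading term of the third, $\tfrac13\,{:}(a^+)^2(a^-)^2{:} = \tfrac13(a^+_{-1})^2(a^-_{-1})^2\mathbbm{1}$.

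The last step, and the only one requiring real care, is to recognize $G^+_0 G^- = \tfrac23 a^+_{-2}a^-_{-1}\mathbbm{1} + \tfrac13(a^+_{-1})^2(a^-_{-1})^2\mathbbm{1}$ as $3J_{-1}^2\mathbbm{1} - DJ - \tfrac43\omega$. For this I would express the three candidate building blocks in the free-field realization: the derivative $DJ = -\tfrac13\big(a^+_{-2}a^-_{-1} + a^+_{-1}a^-_{-2}\big)\mathbbm{1}$, the given $\omega = \tfrac12\big(a^+_{-1}a^-_{-2} - a^+_{-2}a^-_{-1}\big)\mathbbm{1}$, and the normally ordered square $J_{-1}^2\mathbbm{1} = {:}JJ{:}$. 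The mild subtlety is that computing ${:}JJ{:} = J_{-1}J$ from $J = -\tfrac13 a^+_{-1}a^-_{-1}\mathbbm{1}$ produces, besides the quartic term $\tfrac19(a^+_{-1})^2(a^-_{-1})^2\mathbbm{1}$, two lower contraction terms $-\tfrac19 a^+_{-2}a^-_{-1}\mathbbm{1} + \tfrac19 a^+_{-1}a^-_{-2}\mathbbm{1}$ coming from the single nonvanishing commutators $[a^+_0,a^-_{-1}]$ and $[a^-_0,a^+_{-1}]$, all higher modes annihilating $a^+_{-1}a^-_{-1}\mathbbm{1}$. A three-term linear match in the finite-dimensional weight-two, $J_0$-neutral subspace spanned by $(a^+_{-1})^2(a^-_{-1})^2\mathbbm{1}$, $a^+_{-2}a^-_{-1}\mathbbm{1}$ and $a^+_{-1}a^-_{-2}\mathbbm{1}$ then shows that the $a^+_{-1}a^-_{-2}\mathbbm{1}$ contributions cancel and the remaining coefficients reproduce $G^+_0 G^-$ exactly, completing the proof.
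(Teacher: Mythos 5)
Your proof is correct, and all the numerical checkpoints match the paper's: the Wick combinatorics give exactly
$$ G^+(z)G^-(w) \sim \tfrac{2}{9}(z-w)^{-3} + \tfrac{2}{3}(z-w)^{-2}\,{:}a^+(z)a^-(w){:} + \tfrac{1}{3}(z-w)^{-1}\,{:}a^+(z)^2a^-(w)^2{:}, $$
whose Taylor expansion about $z=w$ reproduces the paper's intermediate answers $G^+_2G^-=\tfrac29\mathbbm{1}$, $G^+_1G^-=\tfrac23 a^+_{-1}a^-_{-1}\mathbbm{1}=-2J$, and $G^+_0G^-=\tfrac23 a^+_{-2}a^-_{-1}\mathbbm{1}+\tfrac13(a^+_{-1})^2(a^-_{-1})^2\mathbbm{1}$; your expansion $J_{-1}^2\mathbbm{1}=\tfrac19\bigl((a^+_{-1})^2(a^-_{-1})^2 - a^+_{-2}a^-_{-1}+a^+_{-1}a^-_{-2}\bigr)\mathbbm{1}$ is also right, and the linear combination $3J_{-1}^2-DJ-\tfrac43\omega$ does cancel the $a^+_{-1}a^-_{-2}\mathbbm{1}$ terms and match. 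The route differs from the paper's in organization rather than substance: the paper expands $G^+_{n+1}=\tfrac13\sum_{m,l}a^+_l a^+_{m-1-l}a^+_{n-m}$ and, for each of $n=2,1,0$ separately, enumerates by hand the finitely many index pairs $(m,l)$ whose corresponding monomial does not annihilate $(a^-_{-1})^3\mathbbm{1}$, whereas you package all three products into a single OPE via Wick's theorem, with the binomial weights $\binom{3}{p}^2p!$ replacing the paper's case-by-case enumeration. Your version is more systematic and makes the three answers fall out of one formula; the paper's is more elementary in that it never invokes Wick's theorem for normally ordered powers, only the commutation relations. Notably, you also supply the verification of the identification $G^+_0G^-=3J_{-1}^2-DJ-\tfrac43\omega$ (including the contraction terms hidden in $J_{-1}^2\mathbbm{1}$), which the paper asserts without detail, so your write-up is if anything more complete at the one point where an error would be easiest to make.
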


\begin{proof}
Let  $\sum_{k=-\infty}^{\infty}c_{k}z^{-k-1} = \left(\sum_{i=-\infty}^{\infty}a_{i}^{+}z^{-i-1}\right)\left(\sum_{j=-\infty}^{\infty}a_{j}^{+}z^{-j-1}\right)$. The coefficients $c_k$ are given by $c_{k+1}=\sum_{l=-\infty}^{\infty}a_{l}^{+}a_{k-l}^{+}$ (since $\left(-l-1\right)+\left(-k+l-1\right)=-\left(k+1\right)-1$).

If we apply this to the formula $$Y\left(\frac{1}{3}\left(a_{-1}^{+}\right)^{3},z\right)=\left(\frac{1}{3}\sum_{n\in\mathbb{Z}}a_{n}^{+}z^{-n-1}\right)^{3}=\frac{1}{3}\sum_{n\in\mathbb{Z}}G_{n}^{+}z^{-n-1},$$ we obtain the following expression for $G_{n}^{+}$: $$ G_{n+1}^{+}=\frac{1}{3}\sum_{m=-\infty}^{\infty}c_{m}a_{n-m}^{+}=\frac{1}{3}\sum_{m=-\infty}^{\infty}\left(\sum_{l=-\infty}^{\infty}a_{l}^{+}a_{m-1-l}^{+}\right)a_{n-m}^{+}. \quad (*)$$ Now we can compute the products $G_{2}^{+}G^{-}$, $G_{1}^{+}G^{-}$ i $G_{0}^{+}G^{-}$.
	
	\begin{itemize}
	\item  $G_{2}^{+}G^{-}$ \\
	From $(*)$ it follows that the only nonzero elements in the product $G_{2}^{+}G^{-}$ are obtained for $ m\geq1$, $m\leq1+l\leq1$, i.e. $m=1$ and $l=1$.
	Since $\left(a_{0}^{+}\right)^{3}\left(a_{-1}^{-}\right)^{3}\mathbbm{1}=6\mathbbm{1}$, we have  $$G_{2}^{+}G^{-}=\frac{1}{27}\left(a_{0}^{+}\right)^{3}\left(a_{-1}^{-}\right)^{3}\mathbbm{1}=\frac{6}{27}\mathbbm{1}=\frac{2}{9}\mathbbm{1}.$$

	\item $G_{1}^{+}G^{-}$ \\
	From $(*)$ it follows that the only nonzero elements in the product $G_{1}^{+}G^{-}$ are obtained for $ m\geq0$, $m\leq1+l\leq1$ $\Rightarrow$ $m=1$ and $l=0$,
	$m=0$ and $l=0$, $m=0$ and $l=-1$.
	Since $\left(a_{0}^{+}\right)^{2}\left(a_{-1}^{-}\right)^{3}\mathbbm{1}=6a_{-1}^{-}\mathbbm{1}$, we have
	\begin{align*}
	G_{1}^{+}G^{-} &= \left(a_{0}^{+}a_{-1}^{+}a_{0}^{+}+a_{-1}^{+}a_{0}^{+}a_{0}^{+}+a_{0}^{+}a_{0}^{+}a_{-1}^{+}\right)\frac{1}{27}\left(a_{-1}^{-}\right)^{3}\mathbbm{1} &\\
	&= \frac{6}{9}a_{-1}^{+}a_{-1}^{-}\mathbbm{1}=-2J. &
	\end{align*}

	\item $G_{0}^{+}G^{-}$ \\
	From $(*)$ it follows that the only nonzero elements in the product $G_{0}^{+}G^{-}$ are obtained for $ m\geq-1$, $m\leq1+l\leq1$ $\Rightarrow$ $m=1$ and $l=0$,
	$m=0$ and $l=0$, $m=0$ and $l=-1$, $m=-1$ and $l=0$, $m=-1$ and $l=-1$,
	$m=-1$ and $l=-2$.
	Since $a_{0}^{+}\left(a_{-1}^{-}\right)^{3}\mathbbm{1}=3\left(a_{-1}^{-}\right)^{2}\mathbbm{1}$, we have
	\begin{align*}
	G_{0}^{+}G^{-} &= (a_{0}^{+}a_{-2}^{+}a_{0}^{+}+a_{-1}^{+}a_{-1}^{+}a_{0}^{+}+a_{-2}^{+}a_{0}^{+}a_{0}^{+}+a_{0}^{+}a_{-1}^{+}a_{-1}^{+}+a_{-1}^{+}a_{0}^{+}a_{-1}^{+} + \\ & +a_{0}^{+}a_{0}^{+}a_{-2}^{+})\frac{1}{27}(a_{-1}^{-})^{3}\mathbbm{1} &\\
	&= \frac{2}{3}a_{-2}^{+}a_{-1}^{-}\mathbbm{1}+\frac{1}{3}\left(a_{-1}^{+}\right)^{2}\left(a_{-1}^{-}\right)^{2}\mathbbm{1}&\\
	&=  3J_{-1}^{2}-DJ-\frac{4}{3}\omega.&
	\end{align*}

\end{itemize}
\end{proof}

\section{Proof of Proposition \ref{zhu_k_5_3}}
Let $\Omega_4$ be the singular vector of conformal weight 4 in $\mathcal{W}^k$ for $k=-5/3$ (cf. Lemma \ref{singW_4}). Using the formula for $\Omega_4$, we outline the computations needed to prove the relation (cf. Proposition \ref{zhu_k_5_3}) $$[G^+]^2([\overline \omega] + \frac{1}{9}) = 0$$ in the Zhu algebra $A_{\overline \omega}(\mathcal{W}_k)$. 


\begin{lemma}
	Let $\Omega_4$ be the singular vector of conformal weight 4 in $\mathcal{W}^k$ for $k=-5/3$. After changing the Virasoro vector to $\overline \omega = \omega + \frac{1}{2} DJ$, singular vector of conformal weight 4 in $\mathcal{W}^k$ is given by
	\begin{align*}
	\overline \Omega_4 &= -\frac{62}{9}  L(-2)^2\mathbbm{1} + \frac{14}{3} L(-4)\mathbbm{1} - 18J(-1)^4\mathbbm{1}  + 31J(-2)J(-1)^2\mathbbm{1}  - & \\
	&- 118J(-3)J(-1)\mathbbm{1}+ \frac{133}{9}J(-2)^2\mathbbm{1} - \frac{8}{9}J(-4)\mathbbm{1} +\frac{62}{9}  L(-2)J(-2)\mathbbm{1} - & \\
	&- 12  L(-3)J(-1)\mathbbm{1}+ 46  L(-2)J(-1)^2\mathbbm{1}  - G^{+}(-2)G^{-}(-2)\mathbbm{1} + & \\
	&+ G^{+}(-1)G^{-}(-3)\mathbbm{1} -18J(-1)G^{+}(-1)G^{-}(-2)\mathbbm{1}. &
	\end{align*}
\end{lemma}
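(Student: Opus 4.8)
The plan is to recognize that $\overline\Omega_4$ is simply $\Omega_4$ re-expressed in the modes attached to the shifted Virasoro vector $\overline\omega=\omega+\frac12 DJ$, so the lemma reduces to a direct change of generators. First I would record that $\Omega_4$ stays singular after the shift. In the $\overline\omega$-grading the highest-weight conditions (cf. Proposition \ref{rep_exs}) demand annihilation by $J(n),L(n),G^+(n)$ for $n\ge 1$ and by $G^-(n)$ for $n\ge 0$; since $J(n)=J_n$, $L(n)=L_n-\frac{n+1}{2}J_n$, $G^+(n)=G^+_n$, $G^-(n)=G^-_{n+1}$, each of these is built from the operators $J_m,L_m,G^\pm_m$ with $m\ge 1$ that already kill $\Omega_4$. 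As $\Omega_4$ also has $J_0$-charge $0$, its $\overline\omega$-weight is $L(0)\Omega_4=(L_0-\tfrac12 J_0)\Omega_4=4\,\Omega_4$, so it remains a weight-$4$ singular vector and $\overline\Omega_4=\Omega_4$ as an element of $\mathcal{W}^k$. (Uniqueness of the weight-$4$ singular vector, Lemma \ref{singW_4}, gives the same conclusion up to a scalar, pinned down by a single coefficient.)

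The substantive part is then the algebraic rewriting via the inverse dictionary $J_n=J(n)$, $L_n=L(n)+\frac{n+1}{2}J(n)$, $G^+_n=G^+(n)$, $G^-_n=G^-(n-1)$. The three $G$-bilinears relabel with no reordering, e.g. $-G^+_{-2}G^-_{-1}\mathbbm{1}=-G^+(-2)G^-(-2)\mathbbm{1}$ and $G^+_{-1}G^-_{-2}\mathbbm{1}=G^+(-1)G^-(-3)\mathbbm{1}$, reproducing the last three terms verbatim. The terms $\frac{14}{3}L_{-4}\mathbbm{1}$, $-12L_{-3}J_{-1}\mathbbm{1}$ and $46L_{-2}J_{-1}^2\mathbbm{1}$ carry the Virasoro mode on the far left, so substitution is immediate and each spins off one extra pure-$J$ monomial, namely $-7J(-4)\mathbbm{1}$, $+12J(-3)J(-1)\mathbbm{1}$ and $-23J(-2)J(-1)^2\mathbbm{1}$ respectively.

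The one step requiring care is the quadratic term $-\frac{62}{9}L_{-2}^2\mathbbm{1}$. Writing $L_{-2}=L(-2)-\frac12 J(-2)$ and squaring, the cross term must be symmetrized; here I would use $[L(-2),J(-2)]=2J(-4)$ (which follows from $[L_{-2},J_{-2}]=2J_{-4}$ and $[J_{-2},J_{-2}]=0$, and is in particular independent of $k$) to get
$$L_{-2}^2\mathbbm{1}=L(-2)^2\mathbbm{1}-L(-2)J(-2)\mathbbm{1}+J(-4)\mathbbm{1}+\tfrac14 J(-2)^2\mathbbm{1}.$$
Finally I would collect coefficients monomial by monomial. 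The pure-$J$ contributions pool from several sources: the $J(-2)^2$ coefficient is $-\frac{31}{18}+\frac{33}{2}=\frac{133}{9}$, the $J(-4)$ coefficient is $-\frac{62}{9}-7+13=-\frac{8}{9}$, the $J(-3)J(-1)$ coefficient is $-130+12=-118$, and the $J(-2)J(-1)^2$ coefficient is $54-23=31$, while $J(-1)^4$, $L(-2)^2$, $L(-4)$, $L(-3)J(-1)$, $L(-2)J(-1)^2$ and the new $L(-2)J(-2)$ term retain $-18,-\frac{62}{9},\frac{14}{3},-12,46,\frac{62}{9}$. These match $\overline\Omega_4$ exactly. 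I do not expect any genuine obstacle beyond this bookkeeping; the only nonroutine input is the reordering commutator $[L(-2),J(-2)]=2J(-4)$ feeding the quadratic term.
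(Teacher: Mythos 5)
Your proposal is correct and follows the same route the paper takes (the paper states this lemma in Appendix~B as a direct calculation and omits the details): you justify that $\Omega_4$ remains a weight-$4$ singular vector in the $\overline\omega$-grading, substitute $L_{-2}=L(-2)-\tfrac12 J(-2)$, $L_{-3}=L(-3)-J(-3)$, $L_{-4}=L(-4)-\tfrac32 J(-4)$, $G^-_n=G^-(n-1)$, handle the only noncommuting product via $[L(-2),J(-2)]=2J(-4)$, and your collected coefficients ($31$, $-118$, $\tfrac{133}{9}$, $-\tfrac{8}{9}$, $\tfrac{62}{9}$, etc.) all check out against the stated $\overline\Omega_4$. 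No gaps.
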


\medskip

Acting with $G^{+}(0)^2 $ on the singular vector $\overline \Omega_4$ yields the following formula:
\begin{flalign*}
	G^{+}(0)^2 \overline \Omega_4 &= -\frac{484}{3}G^{+}(-1)G^{+}(-3)\mathbbm{1} + \frac{704}{9}G^{+}(-2)^2\mathbbm{1} -44J(-1)G^{+}(-1)G^{+}(-2)\mathbbm{1}  + & \\
	&+ 44J(-2)G^{+}(-1)^2\mathbbm{1} + 44 L(-2)G^{+}(-1)^2\mathbbm{1}.&
	\end{flalign*}
	
Recall that the Zhu algebra $A_{\overline \omega}(\mathcal{W}^k)$ is a quotient of the Smith algebra $R(g)$ for $g(x,y) = -(3x^2 - (2k+3)x - (k+3)y)$ (cf. Proposition \ref{smith-zhu}).

\begin{lemma}
Let $E,F,X,Y$ be the generators of the Smith algebra $R(g)$.	In the Zhu algebra $A_{\overline \omega}(\mathcal{W}^k)$ it holds that:
	\begin{enumerate} 
		\item $[G^{+}(-1)G^{+}(-3)\mathbbm{1}] = E^2$
		\item $[G^{+}(-2)^2\mathbbm{1}] = E^2$
		\item $[J(-1)G^{+}(-1)G^{+}(-2)\mathbbm{1}] = -E^2X$
		\item $[J(-2)G^{+}(-1)^2\mathbbm{1}] = -E^2X$
		\item $[ L(-2)G^{+}(-1)^2\mathbbm{1}] = E^2 Y + 2E^2.$
	\end{enumerate}
\end{lemma}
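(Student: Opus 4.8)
The plan is to reduce every mode down to the $-1$ level using two standard facts about the Zhu product and $O(V)$ (cf. the Zhu algebra subsection and \cite{Z}). For homogeneous $a\in V^0$ of weight $\deg a$ and arbitrary $b$ one has the expansion
$$ a*b=\sum_{j\ge 0}\binom{\deg a}{j}\,a_{j-1}b, $$
and, since $\mathrm{Res}_z\, Y(a,z)\tfrac{(1+z)^{\deg a+n}}{z^{n+2}}b\in O(V)$ for every $n\ge 0$, one gets the reduction
$$ a_{-n-2}\,b\equiv-\sum_{j=1}^{\deg a+n}\binom{\deg a+n}{j}\,a_{j-n-2}\,b\pmod{O(V)}. $$
The base identity is $E^2=[G^+]*[G^+]=[G^+(-1)^2\mathbbm 1]$, which follows from the first formula together with $G^+(0)G^+(-1)\mathbbm 1=0$ (the modes of $G^+$ commute and $G^+(0)\mathbbm 1=0$). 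Note that $G^+,J$ have weight $1$ and $\overline\omega$ has weight $2$, so all leading operators occurring below lie in $V^0$.

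For each of the five states I would strip off the outermost mode in turn. A leading mode $a_{-m}$ with $m\ge 2$ is removed by the reduction formula, while a leading mode $a_{-1}$ is removed by writing $a_{-1}b=a*b-\sum_{j\ge 1}\binom{\deg a}{j}a_{j-1}b$ and evaluating the nonnegative modes through the commutation relations $[G^+(m),G^+(n)]=0$, $[J(m),G^+(n)]=G^+(m+n)$ and the annihilation relations $G^+(n)\mathbbm 1=J(n)\mathbbm 1=0$ for $n\ge 0$. This produces the auxiliary identities $[G^+(-2)\mathbbm 1]=-E$, $[G^+(-3)\mathbbm 1]=E$ and $[G^+(-1)G^+(-2)\mathbbm 1]=-E^2$, from which $(1)$ and $(2)$ follow at once, namely $[G^+(-1)G^+(-3)\mathbbm 1]=E*[G^+(-3)\mathbbm 1]=E^2$ and $[G^+(-2)^2\mathbbm 1]=-[G^+(-1)G^+(-2)\mathbbm 1]=E^2$. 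Identities $(3)$ and $(4)$ are obtained the same way, reducing the leading $J$-mode and then using $J(0)G^+(-1)G^+(-2)\mathbbm 1=2G^+(-1)G^+(-2)\mathbbm 1$ and $J(0)G^+(-1)^2\mathbbm 1=2G^+(-1)^2\mathbbm 1$.

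For $(5)$ the leading operator is the weight-two field $\overline\omega$, so I would use $\overline\omega*b=L(-2)b+2L(-1)b+L(0)b$ and solve for $[L(-2)b]$ with $b=G^+(-1)^2\mathbbm 1$. The remaining terms are handled by $[L(0)b]=(\deg b)[b]=2E^2$ and by the Virasoro reduction $[L(-1)b]=-(\deg b)[b]=-2E^2$; the latter is the relation $L(-1)b+L(0)b\in O(V)$, which is exactly the image of $b\circ\mathbbm 1$ and follows from $Y(b,z)\mathbbm 1=e^{zL(-1)}b$. This yields $[L(-2)b]=Y\,E^2+2E^2$.

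The genuine difficulty is not the individual reductions but the bookkeeping of noncommutativity: the left-hand sides are particular PBW-ordered states, whereas the right-hand sides are prescribed orderings in $E,X,Y$. After the reductions one lands on expressions such as $XE^2-2E^2$ (in the computation of $(4)$) or $-XE^2+2E^2$ (in $(3)$), and these must be rewritten using the Smith relations $XE-EX=E$, hence $XE^2=E^2X+2E^2$, together with the centrality of $Y$ (Definition \ref{smith_type}), in order to reach exactly $-E^2X$ and $E^2Y+2E^2$. Keeping these reorderings consistent, and making sure each reduction is applied only to the outermost mode of a fully written-out state and never inside an operator that still acts on it, is the part that requires the most care.
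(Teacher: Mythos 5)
Your proposal is correct and follows essentially the same route as the paper: both reduce the deeper negative modes via the standard relations $a_{-n-2}b\in -\sum_{j\ge1}\binom{\deg a+n}{j}a_{j-n-2}b+O(V)$, evaluate the resulting zero modes, and use the auxiliary identities $[G^+(-2)\mathbbm 1]=-E$, $[G^+(-3)\mathbbm 1]=E$, $[G^+(-1)G^+(-2)\mathbbm 1]=-E^2$. The only (cosmetic) difference is that the paper multiplies on the right, using $b*a\equiv \mathrm{Res}_z\,Y(a,z)\frac{(1+z)^{\deg a-1}}{z}b$, so the orderings $-E^2X$ and $E^2Y$ appear directly, whereas you multiply on the left and then reorder with $XE^2=E^2X+2E^2$ and the centrality of $Y$; both yield the same identities.
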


\begin{proof}

From relations	$ (G^+(-2) + G^+(-1) )v \in O(V) $ and $ (G^+(-3) + G^+(-2) )v \in O(V) $ (cf. \cite{Z}), we have that $$ [G^{+}(-3)\mathbbm{1}] =- [G^{+}(-2)\mathbbm{1}] = [G^{+}(-1)\mathbbm{1}] = E.$$ Hence
		\begin{flalign*}
		[G^+(-3)]*[G^+(-1)]= E^2 &= Res_z \frac{(1+z)^0}{z}G^+(z)G^+(-2)\mathbbm{1} + O(V) & \\
		&= Res_z \left(\frac{1}{z} \sum G^+(n)z^{-n-1}G^+(-3)\mathbbm{1}\right) + O(V)& \\
		&= [G^+(-1) G^{+}(-3)\mathbbm{1}]. &
		\end{flalign*}
Also, since	$ (G^+(-1) + G^+(-2) )G^+(-2) \in O(V) $, it follows that
		$$ [G^{+}(-2)^2\mathbbm{1}] =-[G^{+}(-1)G^{+}(-2)\mathbbm{1}] = E^2. $$
Next, we compute:	
		\begin{flalign*}
		[G^{+}(-1)G^+(-2)]*[J(-1)]=-E^2X &= Res_z \frac{(1+z)^0}{z}J(z)G^{+}(-1)G^+(-2)\mathbbm{1} + O(V) & \\
		&= Res_z \left(\frac{1}{z} \sum J(n)z^{-n-1}G^{+}(-1)G^+(-2)\mathbbm{1}\right) + O(V)& \\
		&= [J(-1) G^{+}(-1)G^{+}(-2)\mathbbm{1}], &
		\end{flalign*}
		\begin{flalign*}
		[J(-2)G^+(-1)]*[G^+(-1)] &=(-XE+E)E = Res_z \frac{(1+z)^0}{z}G^+(z)J(-2)G^+(-1)\mathbbm{1} + O(V) & \\
		&= [G^+(-1) J(-2)G^+(-1)\mathbbm{1}] & \\
		&= -[G^{+}(-3)G^+(-1)\mathbbm{1}]+ [J(-2)G^{+}(-1)^2\mathbbm{1}] &
		\end{flalign*}
		
		$\Longrightarrow  [J(-2)G^{+}(-1)^2\mathbbm{1}] = XE^2 + 2E^2 = -E^2X, $
		\begin{flalign*}
		[G^{+}(-1)^2]*[ L(-2)]= E  Y &= Res_z \frac{(1+z)^1}{z}  L(z)G^{+}(-1)^2\mathbbm{1} + O(V) & \\
		&= [ L(-2)G^{+}(-1)^2\mathbbm{1}] + [ L(-1)G^{+}(-1)^2\mathbbm{1}] & \\
		&= [ L(-2)G^{+}(-1)\mathbbm{1}] + 2[G^{+}(-1)G^{+}(-2)\mathbbm{1}] &
		\end{flalign*}
		$\Longrightarrow  [ L(-2)G^{+}(-1)^2\mathbbm{1}] = E^2 Y + 2E^2. $

\end{proof}

\begin {proposition} 
In the Zhu algebra $A_{\overline \omega}(\mathcal{W}_k)$ it holds that $$[G^+]^2([\overline \omega] + \frac{1}{9}) = 0.$$

\end{proposition}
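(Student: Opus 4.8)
The plan is to exploit the fact that $\overline \Omega_4$ generates the maximal proper ideal $I$ of $\mathcal{W}^k$ at $k=-5/3$, so that $\overline \Omega_4$ maps to $0$ in the simple quotient $\mathcal{W}_k = \mathcal{W}^k/I$. Since $I$ is a vertex-algebra ideal it is stable under every mode operator; in particular it is stable under $G^+(0)$, and therefore $G^+(0)^2\overline \Omega_4 \in I$ vanishes in $\mathcal{W}_k$. Consequently its image in the Zhu algebra $A_{\overline \omega}(\mathcal{W}_k)$ is zero. The entire content of the proposition is then to read off what this vanishing says about the generators $E=[G^+]$, $X=[J]$, $Y=[\overline \omega]$.

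First I would record the explicit expansion of $G^+(0)^2\overline \Omega_4$ displayed above, a weight-$4$, charge-$(+2)$ vector which is a linear combination of the five monomials $G^{+}(-1)G^{+}(-3)\mathbbm{1}$, $G^{+}(-2)^2\mathbbm{1}$, $J(-1)G^{+}(-1)G^{+}(-2)\mathbbm{1}$, $J(-2)G^{+}(-1)^2\mathbbm{1}$ and $L(-2)G^{+}(-1)^2\mathbbm{1}$. Using the linearity of the projection $v\mapsto[v]$ together with the five identities of the preceding Lemma, I substitute each monomial by its image in $A_{\overline \omega}(\mathcal{W}^k)$ and push the result forward along the quotient map $A_{\overline \omega}(\mathcal{W}^k)\to A_{\overline \omega}(\mathcal{W}_k)$, under which $E,X,Y$ go to the corresponding generators. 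This gives
\begin{align*}
0 = [G^+(0)^2\overline \Omega_4]
&= -\tfrac{484}{3}E^2 + \tfrac{704}{9}E^2 - 44(-E^2X) + 44(-E^2X) + 44\bigl(E^2Y+2E^2\bigr).
\end{align*}

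The one point that needs care is the cancellation of the noncommutative terms: the contributions of $J(-1)G^{+}(-1)G^{+}(-2)\mathbbm{1}$ and $J(-2)G^{+}(-1)^2\mathbbm{1}$ both equal $-E^2X$ but enter with opposite signs $\mp 44$, so the $E^2X$ part drops out and no ordering ambiguity survives. Collecting the surviving scalars yields $\bigl(-\tfrac{484}{3}+\tfrac{704}{9}+88\bigr)E^2 + 44E^2Y = \tfrac{44}{9}E^2 + 44E^2Y = 44\,E^2\bigl(Y+\tfrac19\bigr)$, and dividing by $44$ produces $[G^+]^2\bigl([\overline \omega]+\tfrac19\bigr)=0$, as claimed.

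The genuine work therefore lies not in this final assembly but in the two inputs I am permitted to assume: the explicit formula for $G^+(0)^2\overline \Omega_4$ (a direct but lengthy computation with the commutation relations for the modes $L(n),J(n),G^\pm(n)$), and the five Zhu projections of the Lemma (each derived from $O(V)$-relations such as $G^+(-2)+G^+(-1)\in O(V)$ together with the commutator formula (\ref{eq: zhu_komutator})). Once these are in hand the proposition is immediate, the only conceptual step being the observation that descendants of a singular vector project to relations in the Zhu algebra of the simple quotient.
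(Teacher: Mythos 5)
Your proposal is correct and follows essentially the same route as the paper's Appendix B: compute $G^{+}(0)^2\overline\Omega_4$, project the five resulting monomials into the Zhu algebra via the auxiliary lemma, and observe that the $E^2X$ terms cancel to leave $44E^2(Y+\tfrac19)=0$. The only addition is your explicit justification that descendants of the singular vector vanish in the simple quotient and hence yield relations in $A_{\overline\omega}(\mathcal{W}_k)$, which the paper leaves implicit.
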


\begin{proof}
From the above computation we have that
	\begin{flalign*}
	[G^{+}(0)^2 \overline \Omega_4] &= -\frac{484}{3}E^2 + \frac{704}{9}E^2 -44(-E^2X)  + 44(-E^2X) +44(E^2  Y + 2E^2)& \\
	&= 44E^2( Y + \frac{1}{9}),&
	\end{flalign*}
	and the claim follows.
\end{proof}

\section*{Acknowledgment}
 
This paper is based in part on A.K.   Ph.D. dissertation \cite{K-Phd}.
 The results of this paper were also  reported by A. K. at the conferences Vertex algebras and infinite dimensional Lie algebras,  Split, November 22-25, 2018 and Representation Theory XI, Dubrovnik, June 23-29.2019.

The authors would like to thank to  T. Arakawa,   D. Ridout,  A. Linshaw for valuable discussions.  
The authors  are  partially supported   by the
QuantiXLie Centre of Excellence, a project cofinanced
by the Croatian Government and European Union
through the European Regional Development Fund - the
Competitiveness and Cohesion Operational Programme
(KK.01.1.1.01.0004).

\Addresses

\end{document}